\renewcommand*{\backref}[1]{}
\renewcommand*{\backrefalt}[4]{\quad \tiny 
  \ifcase #1 ({\color{red}\textbf{NOT CITED.}})%
  \or    (Cited on page~#2.)%
  \else   (Cited on pages~#2.)%
  \fi}
\def\MRbibitem{\@ifnextchar[\my@lbibitem\my@bibitem}
\def\mybiblabel#1#2{\@biblabel{{\hyperref{http://www.ams.org/mathscinet-getitem?mr=#1}{}{}{#2}}}}
\def\myhyperanchor#1{\Hy@raisedlink{\hyper@anchorstart{cite.#1}\hyper@anchorend}}
\def\my@lbibitem[#1]#2#3#4\par{%
  \item[\mybiblabel{#2}{#1}\myhyperanchor{#3}\hfill]#4%
  \@ifundefined{ifbackrefparscan}{}{\BR@backref{#3}}%
  \if@filesw{\let\protect\noexpand\immediate
    \write\@auxout{\string\bibcite{#3}{#1}}}\fi\ignorespaces%
}
\def\my@bibitem#1#2#3\par{%
  \refstepcounter\@listctr
  \item[\mybiblabel{#1}{\the\value\@listctr}\myhyperanchor{#2}\hfill]#3%
  \@ifundefined{ifbackrefparscan}{}{\BR@backref{#2}}%
  \if@filesw\immediate\write\@auxout
    {\string\bibcite{#2}{\the\value\@listctr}}\fi\ignorespaces%
}
\declaretheorem[numberwithin=section]{theorem}
\declaretheorem[sibling=theorem]{lemma}
\declaretheorem[sibling=theorem]{corollary}
\declaretheorem[sibling=theorem]{proposition}
\declaretheorem[sibling=theorem]{conjecture}
\declaretheorem[sibling=theorem]{problem}
\declaretheorem[sibling=theorem,style=remark]{remark}
\numberwithin{equation}{section}     
\Crefname{section}{Section}{Sections}
\Crefname{subsection}{Subsection}{Subsections}
\Crefname{subsubsection}{\S}{\S\S}
\Crefname{conjecture}{Conjecture}{Conjectures}
\setlist[enumerate,1]{label={\upshape(\alph*)},ref=\alph*}
\setlist[enumerate,2]{label={\upshape(\arabic*)},ref=\arabic*}
\newcommand{\arxiv}[1]{Preprint \href{http://arxiv.org/abs/#1}{arXiv:{#1}}}
\newcommand{\R}{\mathbb{R}}
\newcommand{\Z}{\mathbb{Z}}
\newcommand{\T}{\mathbb{T}}
\newcommand{\B}{\mathbb{B}}
\newcommand{\cB}{\mathcal{B}}\newcommand{\cC}{\mathcal{C}}
\newcommand{\cF}{\mathcal{F}}
\newcommand{\cH}{\mathcal{H}}
\newcommand{\cL}{\mathcal{L}}
\newcommand{\cV}{\mathcal{V}}
\newcommand{\st}{\;\mathord{;}\;}
\newcommand{\GL}{\mathrm{GL}}
\newcommand{\Id}{\mathrm{Id}}
\DeclareMathOperator{\diam}{diam}
\newcommand{\Diff}{\mathrm{Diff}}
\newcommand{\id}{\mathrm{id}}
\newcommand{\dd}{\,\mathrm{d}}   
\newcommand{\diag}{\mathrm{diag}}
\newcommand{\wed}{\mathsf{\Lambda}}  
\newcommand{\len}{\mathrm{len}}
\newcommand{\reg}{r} 
\newcommand{\tribar}[1]{\mathopen{| {\kern -1.5pt} | {\kern -1.5pt} |} {#1} \mathclose{| {\kern -1.5pt} | {\kern -1.5pt} |}}
\newcommand{\bigtribar}[1]{\mathopen{\big|{\kern -1.5pt}\big|{\kern -1.5pt}\big|}{#1}\mathclose{\big|{\kern -1.5pt}\big|{\kern -1.5pt}\big|}}
\newcommand{\Bigtribar}[1]{\mathopen{\Big|{\kern -1.5pt}\Big|{\kern -1.5pt}\Big|}{#1}\mathclose{\Big|{\kern -1.5pt}\Big|{\kern -1.5pt}\Big|}}
\newcommand{\biggtribar}[1]{\mathopen{\bigg|{\kern -1.5pt}\bigg|{\kern -1.5pt}\bigg|}{#1}\mathclose{\bigg|{\kern -1.5pt}\bigg|{\kern -1.5pt}\bigg|}}
\newcommand{\biangle}[1]{\mathopen{\langle {\kern -1.7pt} \langle}{#1} \mathclose{\rangle {\kern -1.7pt} \rangle}}
\newcommand*\circled[1]{\tikz[baseline=(char.base)]{
    \node[shape=circle,draw,inner sep=1pt] (char) {\footnotesize{#1}};}}
\newcommand\coolrightbrace[2]{\left.\vphantom{\begin{matrix} #1 \end{matrix}}\right\}#2}
\renewcommand{\epsilon}{\varepsilon}
\renewcommand{\phi}{\varphi}
\renewcommand{\setminus}{\smallsetminus}
\begin{document}

\title{Flexibility of Lyapunov exponents}

\author[J.~Bochi]{J.~Bochi$^1$}
\email{\href{mailto:jairo.bochi@mat.uc.cl}{jairo.bochi@mat.uc.cl}}
\address{Facultad de Matem\'aticas, 
	Pontificia Universidad Cat\'olica de Chile, 
	Avenida Vicu\~na Mackenna 4860,
	Santiago, Chile }

\author[A.~Katok]{A.~Katok$^2$}

\author[F.~Rodriguez Hertz]{F.~Rodriguez Hertz$^3$}
\email{\href{mailto:fjrhertz@gmail.com}{fjrhertz@gmail.com}}
\address{Department of Mathematics,
        The Pennsylvania State University,
        University Park, PA 16802, 
        USA }

\date{August, 2019; revised March, 2021.}

\begin{abstract} 
We outline the flexibility program in smooth dynamics, focusing on flexibility of Lyapunov exponents for volume-preserving diffeomorphisms.
We prove flexibility results for Anosov diffeomorphisms admitting dominated splittings into one-dimensional bundles.
\end{abstract}

\begin{thanks}
{{}$^1$ Partially supported by Fondecyt 1180371 and Conicyt PIA ACT172001.
{}$^2$ Partially supported by NSF Grant DMS 1602409".
{}$^3$ Partially supported by NSF 1500947.
J.B.\ thanks the hospitality of Penn State University and Yeshiva University.
The authors thank the hospitality of Mathematisches Forschungsinstitut Oberwolfach.
}
\end{thanks}

\maketitle

\section{Introduction}

\subsection{The flexibility program}

Important attributes of smooth dynamical systems such as entropies and Lyapunov characteristic exponents with respect to a relevant invariant measure (i.e.\ a volume, an SRB measure, or a measure of maximal entropy) reflect  asymptotic behavior of orbits and with rare exceptions cannot be calculated in a closed form.  Exceptions are systems of algebraic origin, such as translations on homogeneous spaces and  affine maps on compact abelian groups and, in the case of topological entropy, structurally stable discrete time hyperbolic systems where topological entropy can be calculated using an algebraic or symbolic model. Beyond that there are few general relations for various classes of systems, in the form of equalities or inequalities, either involving  only dynamical characteristics themselves or relating those with other quantities coming from geometry, topology or analysis. Let us list some of those relations. Those marked with an asterisk are valid for topological dynamical systems on compact spaces; others require some smoothness assumptions. We refer to original sources only if no standard monograph or textbook exposition is available. 

\begin{itemize}
\item \emph{Variational principles for entropy*} \cite[Theorem~4.5.3]{KH} and \emph{pressure*} \cite[Theorem~20.2.4]{KH}.
\item \emph{Ruelle inequality} for $C^1$ systems \cite[Theorem~S.2.13]{KH}.
\item\emph{Pesin entropy formula} for $C^{1+\epsilon}$ systems preserving an absolutely continuous measure \cite[Theorem~10.4.1]{BP}.
\item \emph{Inequality between fundamental group growth and topological entropy*}
\cite[Theorem~8.1.1]{KH}.
\item\emph{Yomdin--Newhouse}  solution of the {Shub entropy conjecture} for $C^\infty$ systems \cite{Yomdin, Newhouse}.
\item For Anosov systems on infranilmanifolds Shub entropy inequality becomes equality (for the torus case, see \cite[Theorem~18.6.1]{KH}).
\item{Conformal inequality  for entropies}  for geodesic flows on manifolds of negative curvature \cite{K82}.
\end{itemize}

At a more basic level, preservation of a geometric structure imposes restrictions on dynamical invariants. For example, for a volume-preserving system the sum of Lyapunov characteristic exponents is zero, for a holomorphic system all exponents have even multiplicity, and for a symplectic map exponents come in pairs $\pm \lambda$.

The general paradigm of flexibility can be rather vaguely formulated as follows: 
\begin{quote}
{\bf($\frak F$)} \emph{Under properly understood general restrictions (like those listed or mentioned above),  within a fixed class of smooth dynamical systems  dynamical invariants take  arbitrary values.}
\end{quote}

In the context of smooth ergodic theory, one of the most natural flexibility problems concerns Lyapunov exponents for volume-preserving systems with respect to the volume measure.
We mostly restrict our discussion to the classical discrete time invertible dynamical systems, i.e.\ actions of $\Z$. The continuous time case in some key situations follows directly from the discrete one via the suspension construction, in the others can be treated in a parallel way  and in certain respects it is easier since the homotopy restrictions (see below) do not appear. 

The case of multidimensional time is very different.  
There the phenomenon of rigidity that in a sense is complementary to flexibility is prevalent: see e.g.~\cite{KRH}.

Previously to the appearance of this paper, some instances of flexibility have been investigated by Hu, M.~Jiang, and Y.~Jiang \cite{HuJJ_older,HuJJ}, Erchenko \cite{Erch}, Erchenko and Katok \cite{ErchK}, Barthelm\'e and Erchenko \cite{BErch1, BErch2}.


\subsection{General conservative diffeomorphisms}\label{ss.general}

Let $M$ be a smooth compact connected 
manifold of dimension $d \ge 2$, with or without boundary, $f \colon M\to M$ be a diffeomorphism of $M$ and $\mu$ an $f$-invariant ergodic Borel probability measure. 
By the Oseledets Multiplicative Ergodic Theorem, the limits
\begin{equation}
\lim_{n \to \pm\infty} \frac{1}{n}\log\Big( \text{$i$-th singular value of } Df^n(x) \Big) 
\end{equation}
exist  and hence are constant $\mu$-almost everywhere. 
They are called  \emph{Lyapunov  characteristic exponents} or often simply \emph{Lyapunov exponents} of $f$ with respect to $\mu$ and are denoted by $\lambda_{1,\mu}(f) \ge \cdots \ge \lambda_{d,\mu}(f)$.   For the full Oseledets Theorem (which also describes the growth of tangent vectors), see e.g.\  \cite{LArnold,BP}.  The \emph{Lyapunov spectrum} is defined as the vector
\begin{equation}
\boldsymbol{\lambda}_\mu(f) \coloneqq \big( \lambda_{1,\mu}(f),\dots,\lambda_{d,\mu}(f) \big) \, .
\end{equation}
We say that this spectrum is \emph{simple} if none of these numbers is repeated.

Let $m$ be a smooth volume measure, normalized so that $m(M)=1$. 
The particular choice is not important, since for any pair of  such measures, there exists a diffeomorphism taking one to the other \cite{Moser}, \cite[Theorem 5.1.27]{KH}. 
Given $\reg\in \{1,2,\dots,\infty\}$, let $\Diff_m^\reg(M)$ denote the set of $m$-preserving (also called \emph{conservative}) diffeomorphisms $f\colon M \to M$ of class $C^\reg$. 
We will discuss the case when $f$ is ergodic with respect to $m$;
for simplicity we write $\lambda_i(f) = \lambda_{i,m}(f)$, $\boldsymbol{\lambda}(f) = \boldsymbol{\lambda}_m(f)$.
We always have $\sum_{i=1}^d \lambda_i(f) = 0$.

\medskip

Now we formulate  and discuss several representative  questions concerning flexibility of Lyapunov exponents  for general conservative diffeomorphisms.

\begin{conjecture}[Weak flexibility -- general]\label{conj.general_weak} 
Given  any list of numbers $\xi_{1} \ge \dots \ge\xi_{d}$ with $\sum_{i=1}^d\xi_i=0$, there exists an ergodic diffeomorphism $f\in\Diff_m^\infty(M)$ such that 
$\boldsymbol{\lambda}(f) = (\xi_{1}, \dots, \xi_{d})$.
\end{conjecture}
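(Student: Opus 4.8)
The plan is to build the diffeomorphism $f$ by perturbing an explicit linear Anosov model on the torus and then transporting the construction to an arbitrary manifold $M$. More precisely, I would first treat the case $M=\T^d$. Start from a hyperbolic automorphism $A\in\SL(d,\Z)$ whose Lyapunov exponents are all distinct and, crucially, which admits a \emph{dominated splitting into one-dimensional bundles} — for large $d$ one can take block-diagonal companion-type matrices, and for the target exponents one can first realize a linear model whose exponents are \emph{close} to the prescribed $\xi_i$ in a weak sense, namely so that the ordering of the $|\xi_i|$ is respected. The idea is then to invoke the flexibility result advertised in the abstract (flexibility for Anosov diffeomorphisms with dominated splittings into one-dimensional bundles) to push the exponents of this linear model to exactly $(\xi_1,\dots,\xi_d)$ while keeping ergodicity (in fact Bernoullicity) with respect to $m$; the constraint $\sum\xi_i=0$ is exactly what is needed for the volume to remain invariant, since the product of the expansion rates along the one-dimensional subbundles must equal $1$.

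The key steps, in order, would be: (1) for each admissible sign pattern and multiplicity-free ordering, exhibit a linear Anosov automorphism $A$ of some torus $\T^{d}$ with a dominated splitting $T\T^d = E_1\oplus\cdots\oplus E_d$ into one-dimensional subbundles and with $\boldsymbol\lambda(A)$ having the same "shape" as $(\xi_1,\dots,\xi_d)$ — here I would use standard constructions of Anosov automorphisms with simple spectrum and prescribed splitting dimensions (e.g. via irreducible characteristic polynomials with appropriately many real roots inside/outside the unit circle); (2) apply the paper's main flexibility theorem (for Anosov diffeomorphisms with one-dimensional dominated splittings) in a neighborhood of $A$ to obtain $g\in\Diff_m^\infty(\T^d)$, conservative and ergodic, with $\boldsymbol\lambda(g)=(\xi_1,\dots,\xi_d)$ exactly; (3) transfer from $\T^d$ to an arbitrary $M$ of the same dimension by a standard surgery/localization argument — pick a small embedded ball $B\subset M$, a conservative diffeomorphism supported in a slightly larger ball that is conjugate on $B$ to (a rescaled copy of) the dynamics of $g$ on a fundamental domain, and make the rest of $M$ carry a conservative map that is identity near the boundary; then realize the whole thing as a single ergodic conservative diffeomorphism by a "gluing + mixing" construction (Anosov–Katok-type or direct), arguing that the Lyapunov spectrum with respect to $m$ is dominated by the behavior on the positive-measure region where the $g$-dynamics lives, or alternatively by using the fact that all exponents can be made to coincide off that region.

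I expect the main obstacle to be step (3): on a general manifold $M$ there is no global Anosov structure, so one must produce a conservative ergodic $f$ whose exponents are prescribed without any hyperbolicity available globally. The honest resolution is probably to avoid insisting on $f$ being Anosov on $M$ and instead implement a \emph{local} model: use a conservative diffeomorphism that acts like a hyperbolic map with the correct one-dimensional expansion/contraction rates on an invariant set of full measure obtained by an iterated-perturbation (Anosov–Katok) scheme, where ergodicity is forced by the approximation-by-conjugation method while the exponents are controlled because the successive conjugacies can be chosen to have controlled derivative cocycles. A secondary difficulty is bookkeeping the homotopy/degree constraints mentioned in the introduction — on manifolds other than tori the isotopy class of $f$ and the associated action on homology may obstruct certain spectra — but since \Cref{conj.general_weak} only asks for \emph{some} ergodic $f$ (with no prescribed homotopy class), this can be sidestepped by always working inside a small ball and leaving the global topology untouched. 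The subtle quantitative point throughout is keeping the volume exactly preserved during perturbations while steering the exponents: this is handled by ensuring every perturbation is a composition of conservative pieces, and by using $\sum\xi_i=0$ to close up the product of the local expansion factors.
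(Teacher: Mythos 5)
This statement is a \emph{conjecture} in the paper, not a theorem: the authors explicitly write ``In this paper we will not attack \cref{conj.general_weak,conj.general_strong} directly,'' and \cref{ss.future} lists the obstacles to doing so. There is therefore no proof in the paper to compare against, and you should not expect your sketch to close the problem. That said, let me flag where your argument genuinely breaks down, because the gaps align exactly with what the paper identifies as the open difficulties.

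First, \cref{conj.general_weak} allows repeated exponents ($\xi_i \ge \xi_{i+1}$) and allows several of them to vanish. The paper's actual theorem (\cref{t.majorize}) and its corollary (\cref{c.easy}) require a \emph{strictly} decreasing and entirely nonzero spectrum, precisely because they rely on a simple dominated splitting, for which formula \eqref{e.exp_integrals} makes the exponents continuous functionals. The paper is explicit (see the remarks at the end of \cref{sss.extensions}) that realizing non-simple spectra would require finer methods: as $\xi_i \to \xi_{i+1}$ the construction ``degenerates since the amount of domination decreases and the Lyapunov metric explodes.'' Your step (2) quietly assumes the target spectrum is simple and hyperbolic; the conjecture does not. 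The single-zero case can be patched by taking a product with an irrational rotation (the paper does this), but two or more zeros, or any repeated nonzero value, fall outside everything proved here.

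Second, your step (3)---the transfer from $\T^d$ to an arbitrary $M$---is the real open problem, and the sketch you offer (local model in a ball, then an Anosov--Katok style gluing/mixing argument) does not constitute a proof. The Anosov--Katok approximation-by-conjugation method is built for producing \emph{zero-entropy} examples; it is notoriously difficult to run while keeping nonzero Lyapunov exponents under control, because the successive conjugacies have wild derivative cocycles. The known constructions of conservative ergodic diffeomorphisms without zero exponents on general manifolds (Katok \cite{K79} in dimension 2, Dolgopyat--Pesin \cite{DolgoPesin} in general) produce examples where the exponents are not prescribed; realizing a \emph{given} spectrum this way is not available. The paper's authors believe their methods ``should provide the basis for an approach'' but emphasize they do not carry it out. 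Your ``gluing + mixing'' phrase skips over the entire content of this step: you cannot glue a hyperbolic model on a ball to an identity-like map elsewhere and obtain an ergodic conservative diffeomorphism with prescribed exponents by a single localization move, because ergodicity and the exponents depend on the global orbit structure, not on behavior on one positive-measure set. This is a missing idea, not a technicality.

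In short: your plan for step (2) on $\T^d$ with simple hyperbolic spectrum is essentially the paper's \cref{c.easy} and its remark about adding a rotation, which is correct and proved there. But the full conjecture---repeated exponents, multiple zeros, and arbitrary manifolds---remains open, and your steps (1) and (3) do not bridge those gaps.
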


\begin{conjecture}[Strong flexibility -- general]\label{conj.general_strong} 
Given a connected component $\cC\subseteq\Diff_m^\infty(M)$ and any list of numbers $\xi_{1} \ge  \dots \ge\xi_{d}$ with $\sum_{i=1}^d\xi_i=0$, there exists an ergodic diffeomorphism $f\in\cC$ such that 
$\boldsymbol{\lambda}(f) = (\xi_{1}, \dots, \xi_{d})$.
\end{conjecture}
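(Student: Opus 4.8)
\emph{Proof proposal (sketch of a possible approach).} Since \cref{conj.general_strong} is stated as a conjecture, what follows is a strategy rather than a complete argument: it describes how one would try to prove it using the constructions this paper develops in the case of dominated splittings into one-dimensional bundles, and it isolates the step where genuinely new ideas seem to be required. The plan is to reduce the global realization problem to a \emph{local} one. First I would fix a diffeomorphism $f_0$ representing the component $\cC$ and, after an isotopy inside $\cC$ (moving the volume around if necessary, using Moser's theorem \cite{Moser}), arrange that $f_0$ agrees on some smoothly embedded closed ball $B\subseteq M$ with a convenient normal form, say an affine map of a cube with prescribed linear part. The point of the reduction is that any $m$-preserving diffeomorphism $g$ supported in $B$ satisfies $g\circ f_0\in\cC$, because $g$ is isotopic to the identity through maps supported in $B$; so all subsequent modifications are ``free'' as far as the connected component is concerned.

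Next comes the local realization. Inside $B$ one inserts a perturbation block of the type used in this paper: a finite composition of shears, rotations in coordinate planes, and ``slow-downs'', arranged so that the new map $f_1=g\circ f_0$ admits a dominated splitting into one-dimensional bundles over an invariant set of nearly full $m$-measure, and so that the $m$-integrals of the infinitesimal exponents along those bundles equal the prescribed numbers $\xi_1,\dots,\xi_d$. The constraint $\sum_i\xi_i=0$ is then respected for free, since $g$ and $f_0$ preserve $m$; and the one-dimensional-bundle flexibility machinery is exactly what allows those integrals to be tuned to an arbitrary admissible target. The output of this step is a diffeomorphism whose ``average'' spectrum is correct but which is not yet known to be ergodic.

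The final step is to upgrade this to an ergodic map with exactly the spectrum $(\xi_1,\dots,\xi_d)$. I would try to make the block $B$ straddle the stable and unstable laminations of $f_0$ so that $f_1$ becomes stably ergodic in the sense of Pugh--Shub, via an accessibility/coupling mechanism, or, failing that, perform one further perturbation inside $\cC$ to obtain ergodicity while controlling the exponents by a continuity argument on the relevant subset of $\Diff_m^\infty(M)$. Once ergodicity is in hand, the a.e.\ value of the Lyapunov exponents coincides with the $m$-integral of the infinitesimal exponents, and hence equals $(\xi_1,\dots,\xi_d)$ exactly.

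The hard part, and the reason this is only a conjecture, is the tension between prescribing the spectrum and forcing ergodicity, and it is most acute when the target spectrum is degenerate: when several $\xi_i$ vanish the constructed map sits on the boundary of uniform hyperbolicity, where elliptic islands and KAM phenomena obstruct ergodicity and the stable-ergodicity or continuity arguments above collapse. Realizing zero exponents while keeping the volume ergodic in an \emph{arbitrary} isotopy class is exactly the regime in which the one-dimensional-bundle hypothesis used in this paper is not known to be removable.
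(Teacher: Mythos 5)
The statement you are addressing is \cref{conj.general_strong}, which the paper poses as an \emph{open conjecture} and explicitly declines to attack directly; the authors only prove flexibility for the much narrower class of Anosov diffeomorphisms admitting simple dominated splittings (\cref{t.majorize}), and suggest that combining those techniques with, e.g., \cite{DolgoPesin,K79,AnosovK} ``should provide the basis for an approach.'' So there is no in-paper proof to compare against; your text is correctly framed as a strategy rather than a proof, and it is broadly aligned with the direction the authors hint at.

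Still, two concrete gaps in the sketch are worth naming. First, the reduction to a perturbation supported in a single ball $B$ keeps you in the component $\cC$ but does not supply what the paper's perturbative machinery actually requires: the Lyapunov metrics of \cref{p.adapted_metric}, the cone-field argument for damping perturbations in \cref{p.damping}, and above all the exponent formula $\lambda_j(f)=\int_M\log\|Df|_{E_j}\|\dd m$ of \eqref{e.exp_integrals} all presuppose a \emph{globally} Anosov map with a \emph{global} simple dominated splitting. A dominated splitting on an invariant set of ``nearly full measure'' does not make \eqref{e.exp_integrals} true, so the integrals you propose to tune in step 2 are not the Lyapunov exponents, and the continuity of the spectrum in the dynamics — which is exactly what the iterative scheme of \cref{p.central} exploits to hit a target value — is lost. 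Creating a usable global hyperbolic (or nonuniformly hyperbolic) structure from a local block on an arbitrary $f_0\in\cC$ is itself a theorem of Dolgopyat–Pesin type, and even that result works only in the identity component and does not prescribe the spectrum. Second, ergodicity cannot be cleanly deferred to a final step: without it, the ``averaged'' integrals from step 2 need not coincide with a.e.\ Lyapunov exponents, so steps 2 and 3 are entangled rather than sequential. You are right that the degenerate case, where some $\xi_i=0$, is the deepest obstruction; the paper indeed flags it, noting that the only known realization of an all-zero spectrum is the elliptic Anosov–Katok construction \cite{AnosovK}, which lives in a regime essentially disjoint from the hyperbolic methods developed here.
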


If all exponents are equal to zero then \cref{conj.general_weak} is known; this has been proved long ago \cite{AnosovK, Anosov}.
In this case \cref{conj.general_strong} holds for the identity component provided that the dimension is at least $3$.\footnote{Existence of ergodic diffeomorphisms with zero exponents on any manifold with a non-trivial action of the circle $S^1$ including the  $2$-disc $\mathbb D^2$, $2$-sphere $\mathbb S^2$, the annulus, and the Klein bottle, has been established in the  paper \cite{AnosovK}, which can be viewed as the earliest work on flexibility. However in the case of $\mathbb D^2$ in those examples the action on the boundary is an irrational rotation with a Liouvillean rotation number. Existence of  zero entropy ergodic examples that are identity or have a rational rotation number on the boundary is an open and probably very difficult question.}

On an opposite direction, the existence of conservative ergodic (actually Bernoulli) smooth diffeomorphisms \emph{without} zero Lyapunov exponents on any manifold was established by Dolgopyat and Pesin \cite{DolgoPesin} (the $2$-dimensional case was settled earlier \cite{K79}). 
These examples are homotopic to the identity.

\medskip

In this paper we will not attack \cref{conj.general_weak,conj.general_strong} directly.
Instead, we will establish flexibility results for a particular and more tractable class of systems, namely Anosov diffeomorphisms admitting simple dominated splitting. 
Nevertheless, we believe that our methods (combined with techniques from the aforementioned works) should provide the basis for an approach on the conjectures, at least under some restrictions.

\subsection{The Anosov case}\label{ss.Anosov}

Anosov systems represent a natural class for the flexibility analysis.  
We work with conservative Anosov diffeomophisms which are at least $C^2$; then, by a classical theorem of Anosov and Sinai, the volume measure $m$ is ergodic.

All known Anosov diffeomorphisms are topologically conjugate to automorphisms of infranilmanifolds that include tori and nilmanifolds as special cases. Hence the metric entropy with respect to invariant volume (equal to the sum of positive Lyapunov exponents) does not exceed the sum of positive Lyapunov exponents for the corresponding automorphism that is determined  by  induced automorphism of the fundamental group. The main  flexibility question is whether this is the only restriction.


In order to simplify the notation we restrict our discussion to the torus case.
Let $L\in\GL(d,\Z)$ and assume that $L$ is hyperbolic, i.e.\ the absolute values of all of its   eigenvalues are different from one. The matrix $L$ determines the automorphism $F_L$ of the torus $\T^d \coloneqq \R^d/\Z^d$, which is a conservative Anosov diffeomorphism.

Every Anosov diffeomorphism $f$ of $\T^d$ (conservative or not) 
is homotopic and, moreover, topologically conjugate via a homeomorphism isotopic to identity, to an automorphism $F_L$, where $L$ is a hyperbolic matrix \cite[Theorem~18.6.1]{KH}.
In fact, $L$ is the matrix of the automorphism induced by $f$ on the fundamental group of $\T^d$, which is naturally isomorphic to $\Z^d$.\footnote{However, for large enough $d$ is not always true that there is an homotopy between $f$ and $F_L$ consisting of Anosov diffeomorphisms: see \cite{FG}.}

Given a hyperbolic matrix $L\in\GL(d,\Z)$, the Lyapunov spectrum of the automorphism $\boldsymbol{\lambda}(F_L)$ is the vector $\boldsymbol{\lambda}(L)$ whose entries $\lambda_1(L) \ge \cdots \ge \lambda_d(L)$ are the logarithms of the absolute values of the eigenvalues of $L$, repeated according to multiplicity.
The number $u = u(L)$ of positive elements in this list is called the \emph{unstable index} of $L$; so $\lambda_u(L)>0>\lambda_{u+1}(L)$. 
The quantity $\sum_{i=1}^u\lambda_i(L)$  
is equal to both topological entropy $h_\mathrm{top}(F_L)$ and to the metric entropy $h_m(F_L)$ with respect to Lebesgue measure $m$ on $\T^d$.
Therefore,
for any conservative Anosov $C^{1+\epsilon}$-diffeomorphism $f \colon \T^d \to \T^d$ homotopic to $F_L$
we have:
\begin{equation}\label{e.h_condition_proof}
\sum_{i=1}^u\lambda_i(f) = h_m(f) \leq h_\mathrm{top}(f) = h_\mathrm{top}(F_L) = \sum_{i=1}^u\lambda_i(L) \, ,
\end{equation}
using Pesin's formula, the variational principle, and the above-mentioned topological conjugacy.\footnote{In reality, the inequality $\sum_{i=1}^u\xi_i\le\sum_{i=1}^u\lambda_i(L)$ also holds when $f$ is only $C^1$; indeed it follows from the $C^{1+\epsilon}$ case using $C^1$-continuity of the right-hand side and Avila's regularization \cite{Avila}.}
Are there other restrictions on the spectrum of $f$?
We pose the following:

\begin{problem}[Strong flexibility -- Anosov]\label{probl.Anosov_strong}
Let $L\in\GL(d,\Z)$ be a hyperbolic matrix, and let $u$ be its unstable index. 
Given any list of  numbers $\xi_1 \ge \cdots \ge \xi_u > 0 > \xi_{u+1} \ge \cdots \ge \xi_d$ such that 
\begin{equation}\label{e.h_condition}
\sum_{i=1}^d\xi_i=0
\quad\text{and}\quad
\sum_{i=1}^u\xi_i\le\sum_{i=1}^u\lambda_i(L) \, ,
\end{equation}
does there exist a conservative Anosov diffeomorphism $f$ homotopic (and hence topologically conjugate) to $F_L$ such that
$\boldsymbol{\lambda}(f) = (\xi_{1}, \dots, \xi_{d})$?
\end{problem}

Regularity of $f$ may vary but it does not seem likely that the answer depends on regularity, at least above $C^1$.\footnote{It may be more challenging to make some exponents equal in regularity above $C^1$.}

For $d=2$ (and so $u=1$), the problem reduces to existence of Anosov diffeomorphisms on $\T^2$ with any positive value of metric entropy below the topological entropy. 
Here the answer is positive.
It is not difficult to produce such examples even in the real-analytic category   by a fairly straightforward global twist construction.\footnote{See \href{https://jairobochi.wordpress.com/2021/03/25/flexibility_dim_2/}{https://jairobochi.wordpress.com/2021/03/25/flexibility\_dim\_2/}} 
Existence of $C^\infty$ examples also follows from our \cref{t.majorize} below.

In its weak version, i.e.\ without considerations about homotopy, the flexibility problem is likely to have a positive solution:

\begin{conjecture}[Weak flexibility -- Anosov]\label{conj.Anosov_weak}
Given any list of nonzero numbers $\xi_{1} \ge \dots \ge \xi_{d}$ such that 
$\sum_{i=1}^d\xi_i=0$, there exists  an Anosov diffeomorphism of $\T^d$ such that
$\boldsymbol{\lambda}(f) = (\xi_{1}, \dots, \xi_{d})$.
\end{conjecture}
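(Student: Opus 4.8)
The plan is to deduce \cref{conj.Anosov_weak} from \cref{t.majorize}---our flexibility result for conservative Anosov diffeomorphisms with a dominated splitting into one-dimensional bundles---by exploiting the freedom, absent in \cref{probl.Anosov_strong}, to choose the linear model $F_L$. Fix nonzero reals $\xi_1\ge\cdots\ge\xi_d$ with $\sum_{i=1}^d\xi_i=0$, and let $u\in\{1,\dots,d-1\}$ be the number of positive entries, so $\xi_u>0>\xi_{u+1}$. It suffices to find a hyperbolic $L\in\GL(d,\Z)$ with $d$ distinct real eigenvalues of pairwise distinct absolute values (so that $F_L$ is Anosov with a dominated splitting into one-dimensional bundles), with unstable index $u(L)=u$, and such that $\boldsymbol{\lambda}(L)$ majorizes $(\xi_1,\dots,\xi_d)$; \cref{t.majorize} then provides a conservative $C^\infty$ Anosov diffeomorphism $f$ of $\T^d$, homotopic to $F_L$, with $\boldsymbol{\lambda}(f)=(\xi_1,\dots,\xi_d)$.

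To build $L$, start from a fixed $L_0\in\SL(d,\Z)$ that is hyperbolic, has $d$ distinct real eigenvalues of pairwise distinct moduli, and has unstable index $u$; such a matrix exists for every $d\ge 2$ and every $1\le u\le d-1$ (for instance as the companion matrix of an appropriate monic integer polynomial with constant term $\pm 1$). Put $L\coloneqq L_0^{\,k}$ for a large integer $k$: it inherits from $L_0$ the one-dimensional dominated splitting and the unstable index $u$, and $\boldsymbol{\lambda}(L)=k\,\boldsymbol{\lambda}(L_0)$. For $1\le j\le d-1$ the partial sum $\sum_{i=1}^{j}\lambda_i(L_0)$ is strictly positive---it is a sum of positive numbers when $j\le u$, and it equals $-\sum_{i=j+1}^{d}\lambda_i(L_0)$, again a sum of positive numbers, when $j\ge u$---whereas $\sum_{i=1}^{j}\xi_i$ is a fixed real; hence, for $k$ large enough,
\[
\sum_{i=1}^{j}\xi_i \;\le\; k\sum_{i=1}^{j}\lambda_i(L_0) \;=\; \sum_{i=1}^{j}\lambda_i(L)\qquad(1\le j\le d-1),
\]
with equality forced at $j=d$ because both vectors have zero sum and $\det L=\pm1$. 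Thus $\boldsymbol{\lambda}(L)$ majorizes $(\xi_1,\dots,\xi_d)$, and \cref{t.majorize} applies.

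Two points require care. First, \cref{t.majorize} must be applicable when the target list has \emph{repeated} entries; as already noted, forcing equalities among exponents is more delicate. For a non-simple list one either runs the construction behind \cref{t.majorize} directly with coincident exponents---producing an $f$ whose splitting is one-dimensional and dominated only away from the coincidence blocks---or realizes simple lists $\xi^{(n)}\to(\xi_1,\dots,\xi_d)$ (all with the same $u$, all majorized by the \emph{fixed} vector $\boldsymbol{\lambda}(L)$ once $k$ is chosen large) and passes to a limit. The latter is legitimate only if the construction exhibits each exponent as the $m$-average of $\log\|Df|_{E^i}\|$ along a continuously varying bundle $E^i$, so that the exponents depend continuously on the construction data---Lyapunov exponents are \emph{not} $C^1$-continuous in general---and carrying this out is, I expect, the main obstacle. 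Second, one should verify the auxiliary input, namely that $\SL(d,\Z)$ contains matrices with the prescribed unstable index, real spectrum, and distinct moduli for \emph{all} pairs $(d,u)$ with $1\le u\le d-1$. This is expected, but for large $d$ and intermediate $u$ it is a small number-theoretic step worth checking carefully, e.g.\ by perturbing an explicit block matrix inside $\SL(d,\Z)$ while keeping all eigenvalues real, simple, and of pairwise distinct absolute value.
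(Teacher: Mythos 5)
The statement you were asked to prove is labeled a \emph{Conjecture} in the paper, and the paper does \emph{not} prove it in full generality: the authors only establish the sub-case of strictly decreasing lists $\xi_1>\cdots>\xi_d$, stated as \cref{c.easy}, and explicitly leave the case of repeated exponents open (see \cref{sss.extensions}, item (c)). Your argument correctly delivers exactly that sub-case, by the same route the paper takes in the proof of \cref{c.easy}: pick a hyperbolic $L_0\in\GL(d,\Z)$ with unstable index $u$, simple real spectrum of distinct moduli, note that every proper partial sum $\sum_{i=1}^j\lambda_i(L_0)$ is strictly positive, pass to a large power $L_0^k$ so that $k\boldsymbol\lambda(L_0)$ strictly majorizes $\boldsymbol\xi$, and invoke \cref{t.majorize}. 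So your core construction coincides with the paper's.

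Two points you flag yourself are genuine gaps, and you should know how the paper handles the one that can be handled. First, the number-theoretic input---the existence of such an $L_0$ for every pair $(d,u)$ with $1\le u\le d-1$---is not trivial and you leave it unproved; the paper isolates it as \cref{l.number_theory} and proves it by constructing an explicit monic integer polynomial of degree $d$ with constant term $\pm1$, all roots real and simple, with exactly $u$ of them of modulus $>1$ (the construction is a variant of Vijayaraghavan's proof of existence of Pisot numbers of arbitrary degree). Your suggestion to ``perturb a block matrix inside $\SL(d,\Z)$'' is not a proof: integer perturbations that preserve reality, simplicity, and pairwise-distinct moduli of the eigenvalues are not obviously available, and this is exactly what makes the lemma worth stating. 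Second, the case $\xi_i=\xi_{i+1}$ for some $i$ is a real obstruction, not a technical loose end. \cref{t.majorize} requires \emph{both} strict majorization \emph{and} the strict chain $\xi_1>\cdots>\xi_u>0>\xi_{u+1}>\cdots>\xi_d$; when two entries coincide the hypothesis $\mathsf{g}_u(\boldsymbol\xi)>0$ fails, and a limit argument over simple $\boldsymbol\xi^{(n)}\to\boldsymbol\xi$ does not go through because the gap parameter $\sigma$ degenerates, the Lyapunov metric of \cref{p.adapted_metric} blows up, and one loses any uniform control needed to pass continuity of exponents to the limit. Your own remark that Lyapunov exponents are not $C^1$-continuous in general is on target; the paper's discussion in \cref{sss.extensions} item (c) says essentially the same thing and concedes that finer methods would be needed. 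In short: your proof, once supplemented by the paper's \cref{l.number_theory}, proves \cref{c.easy}, but \cref{conj.Anosov_weak} itself remains open, as both you and the paper acknowledge.
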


The case of this conjecture with strict inequalities easily follows from our main result: see \cref{c.easy} below.

\subsection{Dominated splittings}\label{ss.domination}

Given a diffeomorphism $f \colon M \to M$, a $Df$-invariant splitting $T M = E_1 \oplus \cdots \oplus E_k$ into bundles of constant dimension is called \emph{dominated} if each of the bundles dominates the next.
This means that given a Riemannian metric, there exists $n_0 \ge 1$ such that
for every $x \in M$ and all unit vectors $v_1 \in E_1(x)$, \dots, $v_k \in E_k(x)$, we have 
\begin{equation}
\| Df^{n_0}(x) v_1 \| > \| Df^{n_0}(x) v_2 \| > \cdots > \| Df^{n_0}(x) v_k \| \, .
\end{equation}
It is always possible to find an ``adapted'' Riemannian metric for which ${n_0 = 1}$: see \cite{Gourmelon}.
Dominated splittings are automatically continuous, and their existence is a $C^1$-open condition; see e.g.\ \cite[\S~B.1]{BDV} for these and other properties.
We say that  a dominated splitting is \emph{simple}  
if all the subbundles $E_j$ are one-dimensional (and so $k=d$).
In this case, the Oseledets splitting coincides with $E_1 \oplus \cdots \oplus E_d$ almost everywhere, the Lyapunov spectrum is simple, and the Lyapunov exponents with respect to invariant volume $m$ are given by integrals:
\begin{equation}\label{e.exp_integrals}
\lambda_j(f) = \int_M \log \|Df |_{E_j}\| \dd m \, . 
\end{equation}
In particular, in the class of diffeomorphisms admitting a simple dominated splitting, the Lyapunov exponents depend continuously on the dynamics, 
and, therefore, the flexibility analysis becomes more manageable. 
On the other hand, in the absence of domination, small perturbations (with respect to the $C^1$ topology) of the dynamics may have a large effect on the Lyapunov spectrum
and even send all Lyapunov exponents to zero \cite{Bochi_ETDS,BochiViana} (but the $C^2$ norm of such a perturbation generally explodes \cite{LMY}). 

Existence of a dominated splitting also imposes restrictions on the topology of the manifold.

\subsection{Formulation of results}

Let us recall the classical notion of \emph{majorization},  
which has a wide range of applications (see e.g.\ \cite{MOA}).\footnote{See \cite{BoBo} for another instance where majorization plays a role in the perturbation of Lyapunov exponents.}

Suppose that $\boldsymbol{\xi} = (\xi_1,\dots,\xi_d)$ and $\boldsymbol{\eta}=(\eta_1,\dots,\eta_d)$ are \emph{ordered} vectors in $\R^d$, in the sense that
$\xi_1 \ge \cdots \ge \xi_d$ and $\eta_1 \ge \cdots \ge \eta_d$.
We say that $\boldsymbol{\xi}$ \emph{majorizes} $\boldsymbol{\eta}$ (or $\boldsymbol\eta$ \emph{is majorized by} $\boldsymbol\xi$)
if the following conditions hold:
\begin{align}
\xi_1 + \dots + \xi_j &\ge \eta_1 + \dots + \eta_j \quad \text{for all $j \in \{1,\dots, d-1\}$, and} \label{e.major1} \\
\xi_1 + \dots + \xi_d &= \eta_1 + \dots + \eta_d. \label{e.major2}
\end{align}
This is denoted by $\boldsymbol\xi \succcurlyeq \boldsymbol\eta$ (or $\boldsymbol\eta \preccurlyeq \boldsymbol\xi$),
and defines a partial order among ordered vectors.\footnote{Though we work with ordered vectors only, let us note that the partial order can be extended to all of $\R^d$ by declaring that $\boldsymbol\xi \succcurlyeq \boldsymbol\eta$ whenever $\boldsymbol\xi$ and $\boldsymbol\eta$ can be obtained from one another by permuting entries.}
If all the inequalities \eqref{e.major1} are strict (and the equality \eqref{e.major2} holds) then we say that $\boldsymbol{\xi}$ \emph{strictly majorizes} $\boldsymbol{\eta}$ (or $\boldsymbol{\eta}$ \emph{is strictly majorized by} $\boldsymbol{\eta}$),
and denote this by $\boldsymbol\xi \succ \boldsymbol\eta$  (or $\boldsymbol\eta \prec \boldsymbol\xi$).

Intuitively, 
$\boldsymbol\xi \succcurlyeq \boldsymbol\eta$ means that the entries of $\boldsymbol\eta$ are obtained from those of $\boldsymbol\xi$ by a process of ``mixing''.
Let us state this precisely:
If $\boldsymbol{\xi}$ majorizes~$\boldsymbol{\eta}$ then there exists a doubly-stochastic $d\times d$ matrix $P$ such that $\boldsymbol{\eta} = P \boldsymbol{\xi}$; conversely, given an ordered vector $\boldsymbol{\xi}$ and a doubly-stochastic matrix $P$, the vector obtained by reordering the entries of $P \boldsymbol{\xi}$ is majorized by $\boldsymbol\xi$
-- see \cite[Theorem~B.2]{MOA}.

\medskip

We now state the main result of this paper. 
Recall that $M$ is a smooth compact connected  manifold of dimension $d \ge 2$, and $m$ is a smooth volume measure, normalized so that $m(M)=1$; note that we do not assume that $M$ is a torus (nor even an infranilmanifold).
The \emph{unstable index} of an Anosov diffeomorphism is the dimension of its unstable bundle.

\begin{theorem}\label{t.majorize}
Let $\reg\in \{2,3,\dots,\infty\}$, and let $f \in \Diff_m^\reg(M)$ be a conservative Anosov $C^\reg$-diffeomorphism 
with simple dominated splitting.
Let $\boldsymbol{\xi} \in \R^d$ be such that: 
\begin{enumerate}
\item \label{i.spec_12}
$\xi_1 > \cdots > \xi_u > 0 > \xi_{u+1} > \cdots > \xi_d$, where $u$ is the unstable index of $f$;
\item\label{i.spec_3} 
$\boldsymbol{\xi} \prec \boldsymbol{\lambda}(f)$, that is, $\boldsymbol{\xi}$ is strictly majorized by $\boldsymbol{\lambda}(f)$.
\end{enumerate}
Then there is a continuous path $(f_t)_{t \in [0,1]}$ in $\Diff_m^\reg(M)$ such that:
\begin{itemize}
\item $f_0 = f$;
\item each $f_t$ is Anosov with simple dominated splitting;
\item $\boldsymbol{\lambda}(f_1) = \boldsymbol{\xi}$.
\end{itemize}
\end{theorem}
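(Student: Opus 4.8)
The plan is to reduce the result to a sequence of elementary moves that each transfer a small amount of ``mass'' between two consecutive Lyapunov exponents while keeping all the others fixed, staying inside the Anosov-with-simple-dominated-splitting regime throughout. The starting point is the integral formula \eqref{e.exp_integrals}: since the splitting is simple, $\lambda_j(f) = \int_M \log\|Df|_{E_j}\|\dd m$, so to change the spectrum we must change these integrands by composing $f$ with suitable conservative perturbations supported in small balls. The key local building block is a one-parameter family of conservative diffeomorphisms supported in a flow box that, infinitesimally, stretches the $E_j$-direction by a factor $e^{+s}$ and contracts the $E_{j+1}$-direction by $e^{-s}$ (and does nothing in the remaining directions); composing $f$ with such a map and averaging over the box moves $\lambda_j$ up and $\lambda_{j+1}$ down by a controlled amount, leaving $\sum\lambda_i$ and all other exponents unchanged. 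Because domination is a $C^1$-open condition and the perturbations can be taken $C^1$-small (while possibly large in $C^\reg$, which is allowed), the perturbed maps remain Anosov with a simple dominated splitting.

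First I would set up the local perturbation precisely. Fix a point and an adapted chart in which the dominated splitting is nearly constant; using the structure theory of dominated splittings one can trivialize $E_1\oplus\cdots\oplus E_d$ over a small box so that $Df$ is approximately block-diagonal. Inside such a box construct, for a parameter $s$ in a small interval, a conservative $C^\reg$ diffeomorphism $g_s$, equal to the identity near the boundary, that in the relevant two coordinates looks like $(x_j,x_{j+1})\mapsto(\phi_s(x_j,x_{j+1}))$ with Jacobian $1$ and whose effect on $\log\|D(f\circ g_s)|_{E_j}\|$ integrates to a quantity depending monotonically and smoothly on $s$, with the opposite effect on the $E_{j+1}$ integrand and no net effect elsewhere. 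The quantitative statement I want is: there is a continuous map $s\mapsto f_s := f\circ g_s$ with $f_0=f$, each $f_s$ Anosov with simple dominated splitting, $\boldsymbol\lambda(f_s)$ ordered, $\lambda_i(f_s)=\lambda_i(f)$ for $i\notin\{j,j+1\}$, and $\lambda_j(f_s)-\lambda_j(f) = -(\lambda_{j+1}(f_s)-\lambda_{j+1}(f))$ ranging over an interval containing $0$ as $s$ varies.

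Next I would run a combinatorial/convexity argument to chain these moves. By the doubly-stochastic characterization of majorization cited after \eqref{e.major2}, $\boldsymbol\xi\preccurlyeq\boldsymbol\lambda(f)$ means $\boldsymbol\xi$ is obtained from $\boldsymbol\lambda(f)$ by a doubly-stochastic matrix, hence (Birkhoff--von Neumann, or directly) by a finite composition of elementary Robin-Hood transfers $T$-moves between pairs of coordinates, each of which shifts mass from a larger entry to a smaller one. Order these transfers so that at every stage the intermediate vector stays ordered and, using the strictness hypothesis \eqref{i.spec_12}--\eqref{i.spec_3}, stays in the open region where all inequalities $\xi_1>\cdots>\xi_u>0>\xi_{u+1}>\cdots>\xi_d$ and all strict majorization inequalities are preserved with room to spare; this is where strictness is essential, since it keeps us away from the boundary of the majorization polytope and from coalescing exponents, so each elementary transfer can be realized by the local family above without destroying hyperbolicity or simplicity of the splitting. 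Concatenating the finitely many paths (reparametrizing each to $[0,1]$ and stringing them together) yields the desired continuous path $(f_t)_{t\in[0,1]}$ from $f$ to a diffeomorphism $f_1$ with $\boldsymbol\lambda(f_1)=\boldsymbol\xi$.

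The main obstacle, and the step that needs real work, is the construction and the quantitative control of the local conservative perturbation $g_s$: one must produce an \emph{explicit} area/volume-preserving diffeomorphism of a box whose composition with $f$ changes two prescribed Oseledets integrals by exactly offsetting amounts, with a full range of attainable values, while keeping the $C^1$-size small enough that domination (an open condition) survives — and simultaneously not accidentally equalizing any two exponents, which is ruled out only by the strict inequalities in the hypotheses. A secondary subtlety is verifying that the perturbed maps remain genuinely Anosov (not merely partially hyperbolic): here one uses that $C^1$-small perturbations of an Anosov map are Anosov, so the smallness control from the previous point does double duty. Everything else — continuity of $t\mapsto f_t$ in $\Diff_m^\reg(M)$, continuity of exponents via \eqref{e.exp_integrals}, and the bookkeeping of the majorization chain — is then routine.
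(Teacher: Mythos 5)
Your outline captures the broad Baraviera--Bonatti philosophy that underlies the paper's proof (perturb conservatively inside Lyapunov boxes, exploit the integral formula \eqref{e.exp_integrals}, and chain steps while preserving domination), but the two points you flag as ``needing real work'' are precisely where the proposal, as written, cannot be completed, and the paper's entire technical machinery exists to close those two gaps.

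The first gap is the exactness claim. You want the local family $g_s$ to satisfy $\lambda_i(f\circ g_s)=\lambda_i(f)$ for $i\notin\{j,j+1\}$ and to have $\lambda_j,\lambda_{j+1}$ change by exactly offsetting amounts. This cannot be arranged in general: the invariant subbundles $E_i$ of a dominated splitting are only H\"older continuous, hence not smoothly integrable, so any compactly supported conservative twist rotates the Oseledets flag slightly and adds nonzero ``noise'' to \emph{every} exponent. The paper flags this as a fundamental obstacle and overcomes it not by canceling the noise but by constructing a $(d-1)$-parameter model deformation (\cref{s.model}) moving all partial sums $\hat\lambda_j$ simultaneously, and then running a no-retraction/degree argument (\cref{l.topological}, giving \cref{c.cube_image}) to show that the image of the parameter cube under $t\mapsto\hat{\boldsymbol{\lambda}}(f_t)$ contains an open box around the target; that is how a prescribed spectrum is hit exactly. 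Your one-parameter Robin--Hood transfers, however nicely ordered, have no analogous mechanism and will generically drift off the target $\boldsymbol\xi$ because of the noise.

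The second gap is the reliance on $C^1$-small perturbations. In the dominated regime, Lyapunov exponents vary continuously with the diffeomorphism in the $C^1$ topology, so a $C^1$-small step moves the spectrum by a correspondingly small amount, and nothing in your argument bounds that amount below along the path. To terminate in controllably many steps you need a lower bound on the per-step displacement that is uniform and depends only on the spectral gap $\sigma$, \emph{not} on the diffeomorphism being perturbed; this is exactly the content of \cref{p.central}, whose whole point (as the paper stresses) is that $\delta$ depends only on $u$ and $\sigma$. Achieving this forces the paper to abandon $C^1$-smallness: the damping perturbations of \cref{s.damping} may be large in $C^0$ and $C^1$ (the paper explicitly remarks that no $C^1$ bound on the final deformation is available), and Anosovness is preserved because the perturbations are ``dynamically small'' -- supported on a Rokhlin tower of carefully shaped Lyapunov balls, chosen via adapted metrics with an $L^1$ estimate (\cref{p.adapted_metric}) that forces single-step expansion rates to be near the true exponents on most of the support. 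Without a substitute for this machinery, the quantitative statement you announce for the local family $g_s$ cannot be established, and the chaining step fails.
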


\begin{corollary}[Anosov diffeomorphisms display all hyperbolic simple Lyapunov spectra]\label{c.easy}
Given any list of nonzero numbers $\xi_{1} > \dots > \xi_{d}$ whose sum is equal to $0$, there exists a conservative  Anosov $C^\infty$ diffeomorphism of $\T^d$  with simple dominated splitting such that $\boldsymbol{\lambda}(f) = (\xi_{1}, \dots, \xi_{d})$. 
\end{corollary}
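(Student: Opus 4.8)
The plan is to derive \cref{c.easy} from \cref{t.majorize} by starting the flexibility path at a suitable hyperbolic toral automorphism. So fix a list $\xi_1 > \cdots > \xi_d$ of nonzero numbers with $\sum_{i=1}^d \xi_i = 0$, and let $u \in \{1,\dots,d-1\}$ be the number of positive entries, so that $\xi_1 > \cdots > \xi_u > 0 > \xi_{u+1} > \cdots > \xi_d$. It suffices to produce a conservative Anosov diffeomorphism $f_0 \in \Diff_m^\infty(\T^d)$ that has a simple dominated splitting, has unstable index equal to $u$, and satisfies $\boldsymbol{\xi} \prec \boldsymbol{\lambda}(f_0)$. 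Indeed, for such an $f_0$ both numbered hypotheses of \cref{t.majorize} hold --- the first by the definition of $u$, the second by construction --- so, applying that theorem with $M = \T^d$ and $\reg = \infty$, we obtain a path in $\Diff_m^\infty(\T^d)$ joining $f_0$ to a conservative Anosov $C^\infty$-diffeomorphism $f_1$ with simple dominated splitting and $\boldsymbol{\lambda}(f_1) = \boldsymbol{\xi}$, which is the diffeomorphism we want.

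As a starting point I would choose a hyperbolic matrix $L_0 \in \GL(d,\Z)$ whose characteristic polynomial has $d$ distinct real roots of pairwise distinct absolute value, exactly $u$ of which have absolute value greater than $1$. Such matrices exist by elementary arguments: for instance, if $K$ is a totally real number field of degree $d$ and $\alpha \in K$ is an algebraic unit, then multiplication by $\alpha$ on $\cO_K \cong \Z^d$ is represented by a matrix in $\GL(d,\Z)$ whose eigenvalues are the $d$ real conjugates of $\alpha$; by Dirichlet's unit theorem the image of the unit group under the logarithmic embedding is a full-rank lattice in the hyperplane $\{x \in \R^d : \sum_i x_i = 0\}$, so one can choose $\alpha$ whose conjugates have pairwise distinct absolute values with exactly $u$ of them exceeding $1$ in absolute value. (Alternatively $L_0$ may be built as a block-diagonal sum of small companion matrices with widely separated real eigenvalues.) For such $L_0$ the automorphism $F_{L_0}$ is $C^\infty$, preserves Lebesgue measure $m$, is Anosov with unstable index $u$, and --- since the eigenlines of $L_0$ expand at pairwise distinct rates --- carries a simple dominated splitting, with $\boldsymbol{\lambda}(F_{L_0}) = \boldsymbol{\lambda}(L_0)$.

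It remains to enlarge the spectrum of $F_{L_0}$ until it strictly majorizes $\boldsymbol{\xi}$, and for this I would simply pass to a power. For $k \in \N$ one has $\boldsymbol{\lambda}(L_0^k) = k\,\boldsymbol{\lambda}(L_0)$, and $F_{L_0^k}$ is again a conservative Anosov $C^\infty$-diffeomorphism of $\T^d$ with simple dominated splitting and unstable index $u$. The partial sums $S_j \coloneqq \sum_{i=1}^j \lambda_i(L_0)$ obey $S_0 = S_d = 0$ and $S_{j+1} - 2S_j + S_{j-1} = \lambda_{j+1}(L_0) - \lambda_j(L_0) \le 0$, so $j \mapsto S_j$ is concave, hence $S_j \ge 0$ for all $j$; and since $L_0$ is hyperbolic no exponent vanishes, which rules out $S_j = 0$ (it would force $\lambda_j(L_0) = \lambda_{j+1}(L_0) = 0$), so $S_j > 0$ for $1 \le j \le d-1$. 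Therefore $k S_j \to +\infty$ for each such $j$ while $\sum_{i=1}^d k\lambda_i(L_0) = 0 = \sum_{i=1}^d \xi_i$, so for $k$ large we get $k S_j > \sum_{i=1}^j \xi_i$ for all $j \in \{1,\dots,d-1\}$, i.e.\ $\boldsymbol{\xi} \prec \boldsymbol{\lambda}(L_0^k)$; then $f_0 \coloneqq F_{L_0^k}$ satisfies all the requirements above and the proof is complete. The one step that is not purely formal is the existence of $L_0$ with prescribed unstable index and a totally real spectrum of pairwise distinct moduli; the passage to $L_0^k$, the concavity estimate, and the appeal to \cref{t.majorize} are routine.
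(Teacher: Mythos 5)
Your proposal is correct and follows essentially the same two-step strategy as the paper: first produce a hyperbolic $L_0 \in \GL(d,\Z)$ with unstable index $u$ and simple real Lyapunov spectrum, then replace $L_0$ by a power $L_0^k$ so that $k\,\boldsymbol{\lambda}(L_0) \succ \boldsymbol{\xi}$, and finally apply \cref{t.majorize} to $f_0 = F_{L_0^k}$. The only place you diverge is the existence of the seed automorphism. The paper isolates this as \cref{l.number_theory} and proves it in a completely elementary way, by writing down the explicit integer polynomial $P(x) = \sum_{i=0}^d (-1)^i b^{\hat a_i} x^{d-i}$ (inspired by Vijayaraghavan's construction of PV numbers of arbitrary degree) and locating its roots between consecutive powers of $b$; the companion matrix of $P$ then does the job. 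You instead invoke Dirichlet's unit theorem in a totally real number field of degree $d$, using that the logarithmic embedding of the unit group is a full-rank lattice in the trace-zero hyperplane, which therefore meets the open cone of vectors with distinct entries and exactly $u$ positive ones. That route works, and the paper itself remarks on it --- ``Though the existence of such polynomials can be quickly deduced from Dirichlet's unit theorem, we will provide a completely elementary proof'' --- so you have reproduced precisely the argument the authors acknowledged but chose to avoid for self-containment. Two small notes: the parenthetical block-diagonal alternative is sketchier than the rest, since each block would have to lie in some $\GL(k_i,\Z)$ with all eigenvalues off the unit circle (ruling out $1 \times 1$ blocks) and one must still arrange globally distinct moduli with the right split across the unit circle; and the concavity argument for $S_j > 0$ is correct but more than is needed, since with $L_0$ of unstable index $u$ one has directly $S_j = \sum_{i \le j}\lambda_i(L_0) > 0$ for $j \le u$ and $S_j = -\sum_{i>j}\lambda_i(L_0) > 0$ for $u < j < d$.
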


\cref{c.easy} is obtained as follows: first, we take an Anosov linear automorphism whose spectrum is simple and ``large'' with respect to the majorization partial order; then, \cref{t.majorize} allows us to deform the linear automorphism and obtain a conservative Anosov diffeomorphism with the desired Lyapunov spectrum.  (See \cref{ss.easy_proof} for full details.)

Note that as a consequence of \cref{c.easy} we obtain a positive solution of the general weak flexibility \cref{conj.general_weak} on tori for simple spectra: if the desired spectrum contains $0$ then we just take the product $f = g \times R_\theta$ of an appropriate Anosov map $g$ on $\T^{d-1}$ and an irrational rotation $R_\theta$ on~$\T$; this $f$ is ergodic since $g$ is mixing.

\medskip

While condition \eqref{i.spec_3} in \cref{t.majorize} asks for \emph{strict} majorization, there are specific situations where this requirement can be relaxed to ordinary majorization.
This is demonstrated by the next theorem, which also shows that the majorization condition is indeed necessary:

\begin{theorem}\label{t.T3}
Let $F_L$ be an Anosov linear automorphism of $\T^3$ with simple Lyapunov spectrum and unstable index $u$. For any given $\boldsymbol{\xi} = (\xi_1,\xi_2,\xi_3) \in \R^3$ there exists an Anosov diffeomorphism $f \in \Diff_m^\infty(\T^3)$ homotopic (and hence topologically conjugate)
to $F_L$ and with simple dominated splitting such that $\boldsymbol{\lambda}(f) = \boldsymbol{\xi}$ if and only if 
\begin{equation}
\xi_1>\xi_2>\xi_3\, , \quad
\xi_u>0>\xi_{u+1}\, , \quad \text{and} \quad
\boldsymbol{\xi} \preccurlyeq \boldsymbol{\lambda}(L) \, .
\end{equation}
Furthermore, one can choose a homotopy between $F_L$ and $f$ consisting of conservative smooth Anosov diffeomorphisms with simple dominated splitting. 
\end{theorem}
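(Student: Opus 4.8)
The plan is to prove the two implications separately. The ``only if'' direction is soft; the ``if'' direction reduces, when the majorization is strict, to a direct application of \cref{t.majorize}, and its real substance lies in the boundary case where one of the inequalities \eqref{e.major1} is an equality.

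\emph{Necessity.} Let $f\in\Diff_m^\infty(\T^3)$ be Anosov, homotopic to $F_L$, with simple dominated splitting $T\T^3=E_1\oplus E_2\oplus E_3$ and $\boldsymbol\lambda(f)=\boldsymbol\xi$. Domination forces the exponents to be distinct, so $\xi_1>\xi_2>\xi_3$. The topological conjugacy between $f$ and $F_L$ identifies their stable and unstable bundles, so the unstable index of $f$ is $u$; since the dominated splitting refines the hyperbolic splitting, $E^{\mathrm u}=E_1\oplus\dots\oplus E_u$ and $E^{\mathrm s}=E_{u+1}\oplus\dots\oplus E_3$, whence $\xi_u>0>\xi_{u+1}$. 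Conservativity gives $\sum_i\xi_i=\int\log|\det Df|\dd m=0$. It remains to show $\boldsymbol\xi\preccurlyeq\boldsymbol\lambda(L)$, i.e.\ (in dimension $3$, using $\sum_i\xi_i=0$) the two extremal inequalities $\xi_1\le\lambda_1(L)$ and $\xi_3\ge\lambda_3(L)$. The one whose partial-sum index equals $u$ is precisely the entropy inequality \eqref{e.h_condition_proof}. For the other I would argue by a dimension count. Let $\mathcal F$ be the one‑dimensional extremal foliation of $f$ or of $f^{-1}$ whose leaves are contracted at the rate being estimated (the strong unstable foliation when bounding $\xi_1$, the strong stable one when bounding $\xi_3$); the topological conjugacy $h$ carries $\mathcal F$ onto the corresponding affine eigenfoliation of $F_L$ — automatic when $\mathcal F$ is a full stable or unstable foliation, and otherwise a consequence of the structure theory of Anosov diffeomorphisms of $\T^3$ with dominated splitting. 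Transporting leafwise Lebesgue measure of the image foliation back by $h$ produces, on each leaf $W$ of $\mathcal F$, a measure $\nu_W$ equivariant with the constant factor $e^{\lambda_3(L)}$ (resp.\ $e^{\lambda_1(L)}$); since $h$ and $h^{-1}$ are uniformly continuous, $\nu_W$ assigns bounded and bounded‑away‑from‑zero mass to bounded‑length arcs, and bounded distortion of $f$ along $\mathcal F$-leaves (available as $f$ is $C^2$) shows that the lower local dimension of $\nu_{W(x)}$ at $x$ equals $\lambda_3(L)/\xi_3$ (resp.\ $\lambda_1(L)/\xi_1$) for $m$-a.e.\ $x$. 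But every finite Borel measure on $\R$ has lower local dimension at least $1$ at Lebesgue-a.e.\ point, and absolute continuity of $\mathcal F$ makes $m$-a.e.\ $x$ such a point on its leaf; hence $\lambda_3(L)/\xi_3\ge1$, i.e.\ $\xi_3\ge\lambda_3(L)$ (resp.\ $\xi_1\le\lambda_1(L)$, since these exponents are negative, resp.\ positive). This gives $\boldsymbol\xi\preccurlyeq\boldsymbol\lambda(L)$.

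\emph{Sufficiency, strict majorization.} If $\boldsymbol\xi\prec\boldsymbol\lambda(L)$, apply \cref{t.majorize} with $f=F_L$: the linear automorphism is conservative, Anosov, $C^\infty$, and its simple spectrum makes its splitting a simple dominated one, so the theorem furnishes a path in $\Diff_m^\infty(\T^3)$ from $F_L$ to a diffeomorphism with Lyapunov spectrum $\boldsymbol\xi$, through Anosov diffeomorphisms with simple dominated splitting. Every map on this path is homotopic to $F_L$, hence topologically conjugate to it, and the path itself is the asserted homotopy.

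\emph{Sufficiency, boundary case, and the main obstacle.} Now suppose equality holds in one of the inequalities \eqref{e.major1}; passing to $f^{-1}$ and $L^{-1}$ if needed we may take $\xi_1=\lambda_1(L)$, and if also $\xi_1+\xi_2=\lambda_1(L)+\lambda_2(L)$ then $\boldsymbol\xi=\boldsymbol\lambda(L)$ and $f=F_L$ works, so assume $\lambda_2(L)>\xi_2>\xi_3>\lambda_3(L)$ with $\xi_2+\xi_3=-\lambda_1(L)$. Here the diffeomorphism we seek must expand its one‑dimensional strong unstable direction at average rate exactly $\log\mu_1$, where $\mu_1=e^{\lambda_1(L)}$; equivalently $\log\|Df|_{E_1}\|$ must be cohomologous to the constant $\log\mu_1$, which holds for instance whenever $f$ is $C^\infty$-conjugate to the linear model along its strong unstable foliation. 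The idea is to build $f$ within this class — a fibered perturbation of $F_L$ preserving a one‑dimensional foliation close to the strong unstable foliation of $F_L$, acting on it as expansion by $\mu_1$ up to a smooth coboundary, and deformed only in the transverse, center‑stable directions. The transverse problem is then a two‑dimensional flexibility question with target $(\xi_2,\xi_3)$, and since $\xi_2<\lambda_2(L)$ with $\xi_2+\xi_3=\lambda_2(L)+\lambda_3(L)$ this target is \emph{strictly} majorized by $(\lambda_2(L),\lambda_3(L))$, hence reachable by the moves of \cref{t.majorize} applied to the bottom block $E_2\oplus E_3$; carrying out the same construction along a path issuing from $F_L$ yields the homotopy. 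The hard part is exactly this: deforming $F_L$ nontrivially while pinning $\lambda_1$ at its extremal value. One cannot keep the strong unstable foliation and its expansion \emph{linear}, because on $\T^3$ the eigendirections of $L$ are irrational and perturbations respecting the linear structure are trivial; the foliation must be allowed to bend while its average expansion is nonetheless forced to equal $\log\mu_1$, and reconciling this cohomological rigidity of the extremal exponent with the flexible deformation of the other two — along a path of Anosov diffeomorphisms with simple dominated splitting — is the technical core. (A limiting argument from the strict case, applying \cref{t.majorize} to $\boldsymbol\xi^{(n)}\to\boldsymbol\xi$ with $\boldsymbol\xi^{(n)}\prec\boldsymbol\lambda(L)$ and extracting a limit, is conceivable but requires uniform $C^1$-bounds and uniform hyperbolicity and domination estimates along the sequence, which are not obviously available.)
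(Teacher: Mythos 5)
Your plan correctly decomposes the statement into necessity, sufficiency for strict majorization, and the boundary case — and your sufficiency argument in the strict case is exactly right. The other two pieces diverge from the paper, and the boundary case is a genuine gap.

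\emph{Necessity.} Your route for the non‑entropy inequality is genuinely different from the paper's. You propose pulling back the leafwise Lebesgue measure of the linear strong‑unstable eigenfoliation through the topological conjugacy $h$ and comparing local dimensions. This is a plausible heuristic, but two steps need real work. First, the assertion that $h$ carries the strong unstable foliation of $f$ (a one‑dimensional sub‑foliation of the two‑dimensional unstable foliation) onto the eigenfoliation of $F_L$ is not automatic: $h$ is only a H\"older homeomorphism, and while it carries \emph{full} stable/unstable manifolds to their linear counterparts, its behaviour on sub‑foliations requires a structure theorem which you invoke but do not cite or prove. Second, the claim that the lower local dimension of $\nu_{W(x)}$ at $m$‑a.e.\ $x$ equals the exact ratio of exponents needs bounded distortion along the strong foliation and a precise exact‑dimensionality argument; "available as $f$ is $C^2$" is not enough because the regularity of the sub‑foliation, not only of $f$, enters. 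The paper avoids both subtleties: it works in the universal cover, uses the elementary lift estimate $\|\tilde f^n x - \tilde f^n y\|\le C e^{n\lambda_1(L)}$ from $\tilde f=L+\phi$, invokes the Brin--Burago--Ivanov quasi‑isometry of the strong unstable foliation, and combines this with \cref{l.length_growth} (absolute continuity plus Jensen) to bound $\lambda_1(f)$ directly. That route is self‑contained and more robust; if you want to keep the measure‑pullback argument you must supply the foliation‑conjugacy input and the distortion estimates.

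\emph{Sufficiency, boundary case.} You correctly identify the obstacle — deforming while pinning an extremal exponent — but you do not resolve it, and the reduction you propose ("apply \cref{t.majorize} to the bottom block $E_2\oplus E_3$") is not a legitimate application of that theorem, which concerns diffeomorphisms of a manifold, not restrictions to subbundles. The paper's solution is different from your one‑dimensional‑foliation sketch and is worth absorbing: instead of trying to fix the strong unstable foliation of $f$ and its expansion rate (which would indeed run into the cohomological issues you flag), it fixes the \emph{codimension‑one affine} foliation $\cF^{12}$ of $\T^3$ tangent to $E_1^L\oplus E_2^L$ (resp.\ $\cF^{23}$). The rigidity input is \cref{l.rig}: if a conservative Anosov $f$ on $\T^3$ homotopic to $F_L$ preserves $\cF^{12}$, then $\lambda_3(f)=\lambda_3(L)$. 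The proof is short and algebraic: writing $\tilde f=L+\phi$, foliation‑preservation forces $P_3\circ\phi$ to be invariant along $E_1^L\oplus E_2^L$‑planes; since $E_2^L\oplus E_3^L$ projects densely into $\T^3$ and $\phi$ is $\Z^3$‑periodic, $P_3\circ\phi$ is constant, hence $Df$ is block lower triangular with constant $(3,3)$‑entry $\pm e^{\lambda_3(L)}$, and conservativity pins the exponent. The construction is then \cref{p.central_T3}, a one‑parameter version of the central proposition using foliated Lyapunov charts (exploiting that $\T^3$ is parallelizable and $\cF^{12}$ is affine, so the chart derivatives can be taken constant and the charts carry horizontal slices to $\cF^{12}$‑leaves) together with only the elementary model deformation $h^{(1)}_t$, which visibly preserves horizontal slices. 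Because foliation‑preservation is a closed property under this construction, there is no need for your limiting argument and no need for uniform $C^1$ or hyperbolicity bounds; the pinning comes for free from \cref{l.rig}. This is the missing idea.
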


In $d=3$, the condition $\boldsymbol{\xi} \preccurlyeq \boldsymbol{\lambda}(L)$ is strictly stronger than the ``entropy condition'' \eqref{e.h_condition}. Therefore, if \cref{probl.Anosov_strong} has a positive solution, it necessarily involves Anosov diffeomorphisms without a simple dominated splitting, even in the case of simple Lyapunov spectra.
The existence of a single conservative Anosov diffeomorphism $f \colon \T^3 \to \T^3$ whose spectrum $\boldsymbol{\xi} = \boldsymbol{\lambda}(f)$ is not majorized by $\boldsymbol{\lambda}(L)$ is already a very interesting question.

\medskip

Let us note that Hu, M.~Jiang, and Y.~Jiang \cite{HuJJ} have constructed deformations of conservative Anosov diffeomorphisms and of conservative expanding endomorphisms having arbitrarily small metric entropy. In the case of diffeomorphisms with dominated splittings, their result follows from \cref{t.majorize}. Their construction is very different from ours.

\subsection{Comments on the proofs}

Let us summarize the ideas of the proof of \cref{t.majorize}.
Motivated by the work of Shub and Wilkinson \cite{ShubWilk}, Baraviera and Bonatti \cite{BarBon} have proved the following ``local flexibility'' result: 
given a conservative stably ergodic partially hyperbolic diffeomorphism, one can perturb it so that the sum of central Lyapunov exponents becomes different from zero. Their idea was to perturb the diffeomorphism on a small ball around a non-periodic point (so to avoid fast returns) by rotating on a center-unstable plane so that the central bundle borrows some expansion from the unstable bundle. 
Actually, their argument allows to slightly mix Lyapunov exponents in any pair of consecutive bundles in a dominated splitting, while the Lyapunov exponents in the other bundles move extremely little.\footnote{Furthermore, the main result of Baraviera and Bonatti \cite{BarBon} also applies to non-ergodic diffeomorphisms, in which case it controls only \emph{averaged} Lyapunov exponents. Avila, Crovisier, and Wilkinson \cite{ACW} perfected the method in order to control pointwise Lyapunov exponents directly without assuming ergodicity. These subtleties do not concern us because our dynamics are ergodic.} So it is conceivable that with a sequence of Baraviera--Bonatti perturbations one could mix Lyapunov exponents basically at will. Though this idea is ultimately correct, several difficulties need to be overcame in order to turn it into a proof of \cref{t.majorize}. First, how can we ensure that the effect of the perturbations is \emph{not too weak}? Second, how can we obtain a prescribed Lyapunov spectrum \emph{exactly}?

Let us discuss how to overcome the first difficulty.
Instead of using a single ball as the support of a perturbation, we select several small balls whose union has a large first return time but non-negligible measure: this is done with a standard tower construction (in the style of \cite{Bochi_ETDS,BochiViana}, for instance). In each of these balls one composes with the same ``model'' perturbation of the identity that rotates the appropriate plane. However, this trick by itself is not sufficient to conclude. If the domination or the hyperbolicity gets weak, the rotations should be smaller and their effect on the Lyapunov exponents are also small. The solution is \emph{to select carefully not only the location of the balls but also their shape}. This is done using a special adapted Riemannian metric such that on the one hand  domination and hyperbolicity are seen in a single iterate (as in \cite{Gourmelon}), but on the other hand, for a large proportion of points with respect to the reference measure $m$, the rates of expansion on a single iterate with respect to the adapted metric are very close to the Lyapunov exponents. We only perform the perturbations on balls around those good points. In this way we can ensure that the effect of the perturbation on the Lyapunov exponents is considerable. In this regard, we also remark that we have no bound for the $C^1$ size of the deformation $(f_t)_{t \in [0,1]}$ that we eventually construct in \cref{t.majorize}. Therefore, we need to be careful with the quantifiers to ensure some effectiveness of the perturbation without knowing which diffeomorphism we are perturbing.

Concerning the second difficulty, we note that the type of perturbations sketched above always has some small ``noisy'' effect on the Lyapunov exponents that cannot be made exactly zero (except if some of the invariant subbundles are smoothly integrable). 
To resolve this, we define our perturbations depending on several parameters, allowing us to move the Lyapunov spectrum in all directions. By topological reasons, this eventually permits us to obtain open sets of spectra. Now, in order to construct these multiparametric perturbations, in principle one could try to compose several Baraviera--Bonatti-like perturbations that mix different pairs of Lyapunov exponents. This idea turns out to be impractical, essentially because one would need to consider adapted metrics and towers depending on parameters. Luckily, it is possible to define a new type of multiparametric model perturbation that includes Baraviera--Bonatti perturbations as a particular case, freeing us from the trouble of working separately with each pair of exponents.

\medskip

Now let us outline the proof of \cref{t.T3}.
In order to manipulate, say, the first two Lyapunov exponents while keeping the third one unchanged we follow the same strategy as in the proof of \cref{t.majorize}, but using Baraviera--Bonatti perturbations that preserve the center-unstable foliation. This is possible because we start with the automorphism $F_L$ for which this foliation is smooth.
In the converse direction, suppose that $f$ is a conservative Anosov diffeomorphism of $\T^3$ homotopic to $F_L$ whose Lyapunov spectrum is simple but is \emph{not} majorized by the spectrum of $L$.
For example, consider the case $u=2$ and $\lambda_1(f)>\lambda_1(L)$.
If $f$ had a simple dominated splitting then the exponential growth rate of the strong unstable foliation of $f$ would be bigger than the corresponding rate for $F_L$, and this would contradict the quasi-isometric property of this foliation, obtained by Brin, Burago, and Ivanov \cite{BBI}.

\subsection{Further directions of research and other instances of the flexibility program}\label{ss.future}

\subsubsection{Extensions of our methods}\label{sss.extensions}


Hyperbolicity is not fundamental to our constructions; as in Baraviera--Bonatti \cite{BarBon}, domination is the important keyword. 
In fact, this perturbation method seems much more versatile, and should apply if domination is allowed to degenerate in a controlled way on a certain singular set, as it is the case in \cite{K79,DolgoPesin}.
Therefore, the conjectures of \cref{ss.general} seem approachable, at least in some cases.

\medskip

Let us briefly discuss what happens when we approach the boundary of set of allowable spectra $\boldsymbol{\xi}$ in \cref{t.majorize}.
We consider the principal case $f=F_L$. 
Let $u$ be  the unstable  index of $F_L$.
Consider the set of $\boldsymbol{\xi}$ that meet conditions \eqref{i.spec_12} and \eqref{i.spec_3} in the \lcnamecref{t.majorize}.
There are three types of components of the boundary:
\begin{enumerate}
\item $\xi_u=0$ or $\xi_{u+1} = 0$. One can carry our construction across either of those. The resulting map of course is not Anosov anymore but partially hyperbolic with one-dimensional central bundle.\footnote{A similar construction appears in \cite{PonceTah}, but starting with an automorphism $F_L$ of $\T^3$ whose central exponent is already close to $0$.}

\item $\sum_{i=1}^k\xi_i=\sum_{i=1}^k\lambda_i(L)$ for some $k\in\{1,\dots, d-1\}$. 
Cases $k=1$ and $k=d-1$ are feasible: as in the proof of the ``if'' part of \cref{t.T3}, the perturbations preserve codimension one $F_L$-invariant foliations.
For other values of $k$ we get into difficulties: our construction is iterative, but after the first step the foliation that needs to be preserved would no longer be smooth.  

\item $\xi_i=\xi_{i+1}$ for some $i \in \{1,\dots, d-1\}$. Our construction degenerates since the amount of domination decreases and the Lyapunov metric explodes. 

\end{enumerate}

Related to the last point, in order to realize non-simple Lyapunov spectrum (e.g., to prove \cref{conj.Anosov_weak}), one should be able to manipulate Lyapunov exponents at least in some cases of non-simple dominated splittings. But then formula \eqref{e.exp_integrals} does not apply, so finer methods would be required.

\subsubsection{Explicit constructions and bounds}
As mentioned before, in dimension~$2$ there exists a simple construction of conservative Anosov diffeomorphisms with prescribed Lyapunov spectra. 
It would be interesting to have such explicit constructions on higher-dimensional tori as well. 

Here is an exciting general question\footnote{We thank a referee for suggesting it.}: What would be the effect of bounds on the $C^r$ norms on the flexibility results? Our constructions are certainly very ``expensive'' in terms of $C^2$ norms, and probably in terms of $C^1$ norms as well (since do not have estimates on the eccentricities of the Lyapunov balls).

\subsubsection{Other measures}
In other settings, the invariant measure (or measures) one is interested in is not necessarily fixed in the class of dynamical systems under consideration, but varies with the dynamics itself. The prototypical example consists of equilibrium states of sufficiently hyperbolic dynamics with respect to relevant potentials. 
The flexibility paradigm then applies.
See \cite{Erch} for a result in this direction, where expanding maps on the circle are considered.

\subsubsection{Symplectic systems}
The problems that we have posed in the volume-preserving setting have symplectic counterparts, where symmetry of the spectrum appears as an extra requirement. 
It is plausible that our methods can be adapted to the symplectic setting, but not in a straightforward way.

\subsubsection{Flows}

Structural stability for flows does not imply topological conjugacy,  so topological entropy becomes a free parameter even in the uniformly hyperbolic case. Thus, for conservative Anosov  flows on three-dimensional manifolds the   basic flexibility problem involves realization of arbitrary  pairs of numbers as values for topological and metric entropy subject only to the variational inequality. This problem can be solved fairly easily for suspensions of $\T^2$  and unit tangent bundles of surfaces of genus $g\ge 2$ using time changes of homogeneous models  but becomes more interesting (probably still tractable)  for exotic Anosov flows where homogeneous models are not available. 

The problem  becomes really interesting  in the standard setting  when one considers special classes of Anosov flows. The prime example here  is provided  by geodesic flows on compact surfaces of negative curvature. In this case, a natural normalization is available by fixing the total surface area. The  variational inequality is strengthened  by the conformal inequality \cite{K82} and  possible values of the metric entropy $h_m$ and topological entropy $h_\mathrm{top}$ for Riemannian metrics of negative non-constant curvature are restricted to: 
\[
h_m<\left(\frac{4\pi(g-1)}{V}\right)^{\frac{1}{2}}<h_\mathrm{top},
\]
where $g$ is the genus of the surface and $V$ is its total area. 
For any constant curvature metric those inequalities become equalities. The fact that equality  of the metric and topological entropy implies constant curvature is a prototype case of \emph{rigidity} that is discussed below. 
The flexibility problem in this setting is solved in \cite{ErchK}. 
The solution uses methods which are totally different from those of the present work. 
For flexibility results where the conformal class is fixed (and also taking other invariant quantities into account), see \cite{BErch1,BErch2}.
More problems are posed in \cite[Sec.~4]{ErchK} and \cite[Sec.~7]{BErch2}.
 
The  higher dimensional case is wide open. 
One of the difficulties  of dealing with  geodesic flows  on higher dimensional negatively curved manifolds is that   algebraic models have  non-simple Lyapunov spectrum (in fact, either one or two Lyapunov exponents are of the same sign and full splittings never exist).

\subsubsection{Flexibility and rigidity} 

In the context of  conservative smooth dynamical systems the strongest natural equivalence relation is smooth conjugacy.  For symplectic systems it is  similarly a symplectic conjugacy, for geodesic flows --  isometry of the underlying metrics, and so on. 
A general  phenomenon of rigidity in this context can be described as follows:
\begin{quote}
{\bf($\frak R$)} \emph{Values of finitely many invariants determine the system either locally, i.e.\ in a certain neighborhood of a ``model'', or globally within an a priori defined class of systems.}
\end{quote}

The space of equivalence classes at best  can be given some natural infinite-dimensional structure and at worst is ``wild''. This is proven in a number of situations but in general it has to be viewed as  a paradigmatic statement, not a theorem, and, beyond $C^1$ case,  almost every meaningful general question is open. Thus, rigidity should be quite rare since it should appear for very special values of invariants.\footnote{Nevertheless, for zero entropy systems, local rigidity  of toral translations appears in the context of KAM theory and global rigidity for circle diffeomorphisms with Diophantine rotation number. Those situations do not concern us here.}

A natural question in our context is whether particular values of Lyapunov exponents (for conservative systems, or symplectic systems, or geodesic flows) imply rigidity. In agreement with the general  flexibility paradigm one may expect this to happen at the extreme allowable values of exponents. This tends to be true in low dimension, not true in full  generality and probable in a number of interesting situations. 
Several recent results fit this pattern: see the papers \cite{MicenaTah,SaghinYang,GKS} (concerning conservative Anosov diffeomorphisms, mostly) and \cite{Butler1,Butler2} (concerning geodesic flows).

\subsection{Organization of the paper}

The rest of this paper is organized as follows. 
In \cref{s.reduction} we state the technical \cref{p.central}, which is a local multi-parametric version of \cref{t.majorize}, and we show how it implies the theorem.
In \cref{s.adapted} we construct the adapted metrics mentioned above.
In \cref{s.damping} we use them 
to define damping perturbations; 
these are actually ``large perturbations'', but we show that their results  are still Anosov under the appropriate conditions. 
In \cref{s.model} we define the local model of our multi-parametric perturbations and perform some computations concerning those.
In \cref{s.central_proof} we use the results of the previous sections and a tower construction to prove \cref{p.central}.
In \cref{s.torus} we prove \cref{t.T3}.

\subsection{Acknowledgements} 

We thank the referees for several corrections and suggestions that improved the readability of the paper.

\section{Reduction to a central proposition} \label{s.reduction}

Let $H \coloneqq \{(\xi_1,\dots,\xi_d) \in \R^d \st {\textstyle \sum} \xi_j = 0\}$.
Define a linear isomorphism $T \colon H \to \R^{d-1}$ by:
\begin{equation}\label{e.def_T}
T \big( \xi_1,\xi_2, \dots, \xi_{d-1} , \xi_d \big) \coloneqq \big( \xi_1, \xi_1 + \xi_2, \dots, \xi_1 + \cdots + \xi_{d-1} \big).
\end{equation}

Given an ergodic $f \in \Diff_m^\reg(M)$, recall that $\boldsymbol{\lambda}(f)$ denotes the Lyapunov spectrum of $f$ with respect to the volume measure $m$.
We define $\hat{\boldsymbol{\lambda}}(f) \coloneqq T(\boldsymbol{\lambda}(f))$,
that is, $\hat{\boldsymbol{\lambda}}(f)$ is the vector whose $j$-th entry $\hat{\lambda}_j(f)$ is the sum of the $j$ biggest Lyapunov exponents.
This is a natural object to consider because $\hat{\lambda}_j(f)$ equals the top Lyapunov exponent of the $j$-fold exterior power of the derivative cocycle.
It is also convenient to define $\hat{\lambda}_0(f) \coloneqq 0 \eqcolon \hat{\lambda}_d(f)$.
The fact that $\lambda_1(f) \ge \cdots \ge \lambda_d(f)$ means that the function $j \in \{0,\dots,d\} \mapsto \hat{\lambda}_j(f)$ is concave: see \cref{f.concave}.

\begin{figure}[hbt]
	\begin{tikzpicture} 
		\draw [<->] (0,3)--(0,0)--(5.5,0);
		\coordinate (A0) at (0,0);
		\coordinate (A1) at (1,2);		\coordinate (B1) at (1,1.3);		
		\coordinate (A2) at (2,2.5);	\coordinate (B2) at (2,2.1);
		\coordinate (A3) at (3,2.25);	\coordinate (B3) at (3,1.9);
		\coordinate (A4) at (4,1.5);	\coordinate (B4) at (4,1.3);
		\coordinate (A5) at (5,0);
		\draw [-,thick] (A0)--(A1)--(A2)--(A3)--(A4)--(A5);
		\draw [-,thick] (A0)--(B1)--(B2)--(B3)--(B4)--(A5);
		\fill (A0) circle (1.5pt);
		\fill (A1) circle (1.5pt);
		\fill (A2) circle (1.5pt);
		\fill (A3) circle (1.5pt);
		\fill (A4) circle (1.5pt);
		\fill (A5) circle (1.5pt);
		\fill (B1) circle (1.5pt);
		\fill (B2) circle (1.5pt);
		\fill (B3) circle (1.5pt);
		\fill (B4) circle (1.5pt);
	\end{tikzpicture}
	\caption{On the top, the graph of $j \in \{0,\dots,d\} \mapsto \hat{\lambda}_j(f)$ for some $f$. The bottom graph corresponds to $T(\boldsymbol{\xi})$ for some ordered vector $\boldsymbol{\xi}$ satisfying assumptions \eqref{i.spec_12}--\eqref{i.spec_3} from \cref{t.majorize}.}
	\label{f.concave}
\end{figure}
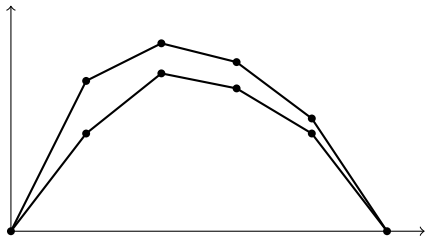

\medskip

In this \lcnamecref{s.reduction} we state \cref{p.central}, which roughly says that 
we can perturb $f$ in order to slightly lower the graph of $j \in \{0,\dots,d\} \mapsto \hat{\lambda}_j(f)$, and that different vertices of the graph can be moved somewhat independently.
We also show how \cref{p.central} implies the main theorem (\cref{t.majorize}).

\medskip

Given $u \in \{1,2,\dots,d-1\}$, 
define a ``gap function'' $\mathsf{g}_u \colon \R^d \to \R$ by:
\begin{equation}\label{e.def_gap}
\mathsf{g}_u(\xi_1,\dots,\xi_d) \coloneqq 
\min \big\{ \xi_1 - \xi_2, \xi_2 - \xi_3, \dots, \xi_{d-1} - \xi_d , \ \xi_u, \ -\xi_{u+1} \big\} \, ;
\end{equation}
Note that if $f$ is a conservative Anosov diffeomorphism of unstable index 
$u$ and admitting a simple dominated splitting then $\mathsf{g}_u(\boldsymbol{\lambda}(f))$ is strictly positive; indeed it is the minimal gap between the $d+1$ numbers
\[ 
\lambda_1(f) > \cdots > \lambda_u(f) > 0 > \lambda_{u+1}(f) > \dots >\lambda_d(f) \, .
\]

\begin{proposition}[Central proposition]\label{p.central}
Let $u \in \{1,2,\dots,d-1\}$, and let $a_1$, \dots, $a_{d-1}$, $\sigma$, and $\delta_0$ be positive numbers.
Then there exists $\delta \in (0,\delta_0)$ so that the following holds:

If $f \in \Diff_m^\reg(M)$ is a conservative Anosov diffeomorphism of unstable index $u$, admitting a simple dominated splitting, and such that $\mathsf{g}_u(\boldsymbol{\lambda}(f)) \ge \sigma$, 
then there exists a continuous map
\[
t \in [0,1]^{d-1} \mapsto f_t \in \Diff_m^\reg(M)
\]
where $f_{(0,\dots,0)} = f$
and for each $t = (t_1,\dots,t_{d-1}) \in [0,1]^{d-1}$, the conservative diffeomorphism $f_t$ is Anosov of unstable index $u$, admits a simple dominated splitting, and, for each $j \in \{1,\dots,d-1\}$,
\begin{gather}
\label{e.box_bounds}
\phantom{t_j = 1 \quad \Rightarrow\quad}
\hat{\lambda}_j(f) - 4\delta a_j           < \hat{\lambda}_j(f_t) < \hat{\lambda}_j(f) + \phantom{1}\delta a_j \, , \\
\label{e.box_top}
t_j = 0 \quad \Rightarrow\quad 
\hat{\lambda}_j(f) - \phantom{1}\delta a_j < \hat{\lambda}_j(f_t) < \hat{\lambda}_j(f) + \phantom{1}\delta a_j \, , \\
\label{e.box_bottom}
t_j = 1 \quad\Rightarrow\quad 
\hat{\lambda}_j(f) - 4\delta a_j           < \hat{\lambda}_j(f_t) < \hat{\lambda}_j(f) - 2\delta a_j \, .
\end{gather}
\end{proposition}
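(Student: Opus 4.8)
\emph{Overall strategy.}
The plan is to realize $f_t$ as $f$ post-composed with many copies, distributed along a long tower, of one fixed \emph{multi-parametric} model perturbation of the identity, and then to calibrate the height of the tower so that the cumulative effect on the partial sums $\hat{\lambda}_j$ has exactly the size demanded by \eqref{e.box_bounds}--\eqref{e.box_bottom}. The governing structural constraint is that $\delta$ must be extracted from $d$, $a_1,\dots,a_{d-1}$, $\sigma$ and $\delta_0$ \emph{alone}: the family of admissible $f$ is not compact, and the construction will produce no $C^1$ bound on $(f_t)$, so every quantitative step must be uniform over conservative Anosov $f$ of unstable index $u$ with simple dominated splitting and $\mathsf{g}_u(\boldsymbol{\lambda}(f)) \ge \sigma$.

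\emph{Fixing the model and choosing $\delta$.}
Before $f$ is revealed I would fix a $C^\reg$ family $(h_t)_{t \in [0,1]^{d-1}}$ of conservative diffeomorphisms of the unit ball of $\R^d$, each equal to the identity near the boundary, with $h_{(0,\dots,0)} = \id$, and such that near the center $Dh_t$ is a product of planar rotations, the $j$-th one by an angle proportional to $t_j$ in the $(e_j,e_{j+1})$-plane, all tapering back to the identity within a shell of small relative measure. The elementary computation of \cref{s.model} is then: composing the $j$-th rotation (by angle $\theta_j$) with a diagonal expanding map $\diag(e^{\mu_1},\dots,e^{\mu_d})$ changes $\log\|\wed^i(\cdot)\|$ for $i \ne j$ only at higher order or through gap-suppressed terms, while for $i = j$ it changes it by $\tfrac12\log\!\big(\cos^2\theta_j + e^{2(\mu_{j+1}-\mu_j)}\sin^2\theta_j\big) \approx -\tfrac12\big(1 - e^{2(\mu_{j+1}-\mu_j)}\big)\theta_j^2$, a negative quantity of exact order $\theta_j^2$ whose coefficient is bounded below by $\tfrac12(1-e^{-2(\sigma-2\epsilon)})$ as soon as $\mu_j - \mu_{j+1} \ge \sigma - 2\epsilon$. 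This is the Baraviera--Bonatti mechanism run simultaneously in all $d-1$ consecutive pairs; crucially the per-ball gain depends on $\theta_j$ and the $j$-th gap but \emph{not} on the absolute sizes of the $\mu_i$, so it is bounded below uniformly. Reading off how many levels of a tower of fixed density one must perturb to shift $\hat{\lambda}_j$ by $\sim 3\delta a_j$ then pins down $\delta$ in terms of $d$, the $a_j$, $\sigma$ and $\delta_0$.

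\emph{The construction and the estimate.}
Given $f$, I would: (i) invoke \cref{s.adapted} for an adapted metric in which the simple dominated splitting and the hyperbolicity are visible in one iterate, and in which $\big|\log\|Df(x)|_{E_j(x)}\| - \lambda_j(f)\big| < \epsilon$ for every $j$ off a set of $m$-measure $< \epsilon$; (ii) run a standard tower (Rokhlin--Kakutani) argument to obtain a disjoint union $\Delta$ of small balls, round in the adapted metric, centered at such ``good'' points, with $m(\Delta)$ bounded below and first-return time at least $R$, with $R$ as large as we wish; (iii) inside each ball, via the chart that flattens the adapted metric and aligns $E_1,\dots,E_d$ with the axes, insert a rescaled copy of $h_t$, relying on the damping perturbations of \cref{s.damping} to reshape the balls and --- since $f_t$ is a deliberately \emph{large} $C^1$-perturbation of $f$ --- to check, through a one-iterate cone criterion in the adapted metric, that \emph{every} $f_t$ ($t \in [0,1]^{d-1}$) remains Anosov of unstable index $u$ with a simple dominated splitting. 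Then $f_0 = f$, the map $t \mapsto f_t$ is continuous in $\Diff_m^\reg(M)$, and by \eqref{e.exp_integrals} so is $t \mapsto \hat{\boldsymbol{\lambda}}(f_t)$. The heart of \cref{s.central_proof} is then the evaluation of $\hat{\lambda}_j(f_t) = \int_M \log\|\wed^j Df_t\|\dd m$ (equivalently, the top exponent of $\wed^j Df_t$): writing $f_t$ as $f$ with the inserted perturbations and using that between two consecutive visits to $\Delta$ the flag cocycle of $f$ contracts --- by domination seen in the adapted metric, $R$ large --- the contributions of distinct balls decouple up to an error small compared to $\delta$, and one obtains
\[
\hat{\lambda}_j(f_t) \;=\; \hat{\lambda}_j(f) + E_j(t_j) + r_j(t),
\]
where $E_j \colon [0,1] \to [-3\delta a_j,\, 0]$ is continuous and non-increasing with $E_j(0) = 0$ and $E_j(1)$ within $\tfrac12\delta a_j$ of $-3\delta a_j$ (the ``diagonal'' effect of the $t_j$-rotation, calibrated by choosing the model's maximal $j$-th angle against the known lower bound for $m(\Delta)$), while the error $r_j(t)$ --- collecting the gap-suppressed and higher-order-in-angle cross-terms, the contributions of the tapering shell and of the non-good points, the motion of the perturbed bundles, and the decoupling defect --- satisfies $|r_j(t)| < \tfrac12\delta a_j$ for all $t$ once $\epsilon$ is small and $R$ large. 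Since $t_j \in [0,1]$, this gives \eqref{e.box_bounds}; specializing to $t_j = 0$ gives \eqref{e.box_top}; and specializing to $t_j = 1$ gives $\hat{\lambda}_j(f_t) \in \big(\hat{\lambda}_j(f) - 4\delta a_j,\, \hat{\lambda}_j(f) - 2\delta a_j\big)$, i.e.\ \eqref{e.box_bottom}. Shrinking $\delta$ to remain below $\delta_0$ affects nothing.

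\emph{Main obstacle.}
I expect the real difficulty to lie not in the bookkeeping but in two uniformity assertions. First, that the per-ball gain is bounded below \emph{independently of which $f$ one is handed}: this is exactly what forces the adapted metric with one-step rates close to the Lyapunov exponents, and uses the gap bound $\sigma$ to keep $e^{2(\mu_{j+1}-\mu_j)}$ away from $1$ --- without it the perturbation could be arbitrarily weak. Second, that the many perturbations genuinely act independently: the contraction rate of the flag cocycle that makes the errors decouple must be read off intrinsically from $f$ (no $C^1$ control on $f_t$ being available) and must, once $R$ is large, dominate the accumulated cross-terms. A further and genuinely substantial point --- the business of \cref{s.damping} --- is that the perturbations are large in the original metric, so persistence of the Anosov property and of the simple dominated splitting for all $f_t$ is itself something to be proved, not a quotation of openness of hyperbolicity.
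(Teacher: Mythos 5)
Your strategy is essentially the paper's: fix a multi-parametric model deformation on the unit ball built from planar rotations, choose the damping time and hence $\delta$ from $d$, $a_j$, $\sigma$, $\delta_0$ alone, take an adapted (Lyapunov) metric with one-step rates $L^1$-close to the exponents, run a Rokhlin tower to find disjoint Lyapunov balls of total measure $\asymp 1/N$ with long return time, insert the model in each ball via the chart, invoke the damping machinery to keep $f_t$ Anosov with simple dominated splitting, and finally estimate $\hat\lambda_j(f_t)$. The topological corollary you would then need (that the image of $[0,1]^{d-1}$ under $t\mapsto\hat{\boldsymbol\lambda}(f_t)$ fills a box) matches \cref{c.cube_image}.

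Where the proposal diverges from the paper's proof, and where I think it has genuine gaps, is in the central computation of the per-ball effect and in how the different parameters $t_j$ are decoupled. Your per-ball estimate $\tfrac12\log\big(\cos^2\theta_j + e^{2(\mu_{j+1}-\mu_j)}\sin^2\theta_j\big)$ is the \emph{one-step} change in $\log\|\wedge^j Df\cdot(e_1\wedge\cdots\wedge e_j)\|$ at the moment of perturbation, and it carries a factor $(1-e^{-2\sigma})$ that does not belong in the cumulative effect on $\hat\lambda_j$: summing the damped contributions along the tower exactly cancels that factor (in the $2\times 2$ model, $\sum_{n\ge 0}\tfrac12(e^{2\gamma}-1)e^{2n\gamma}\theta^2 = -\tfrac12\theta^2$), which is why the paper's answer $-\mathcal{Q}(t_j)$, obtained via the cohomological reduction to $\log\det_j Dg$ in Lyapunov coordinates (\cref{l.key_estimate,l.formula_exponent}) and Jensen's inequality (\cref{l.Jensen}), has no $\sigma$-dependence at all. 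Your version understates the effect, so the calibration $\delta$ would need to be retuned; more importantly it suggests the cumulative accounting hasn't been done. Second, the independence of $t_j$'s effect on $\hat\lambda_j$ from the other $t_i$'s is \emph{not} a ``higher-order or gap-suppressed'' phenomenon: the off-diagonal terms in the Cauchy--Binet expansion of $\det_j Dh_t$ are $O(\theta_i)$, potentially larger than the $O(\theta_j^2)$ main term, and controlling them requires the exact algebraic identity $\det_j Dh_t = \Delta^{(j)}_{t_j}\circ h^{(j-1)}_{t_{j-1}}\circ\cdots\circ h^{(1)}_{t_1}$ (\cref{l.minors}), which follows from the precise triangular structure of the $R^{(i)}_\theta$'s; this is not a negligible remainder but a vanishing one, and without that observation the cross-terms could overwhelm the main term. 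Finally, $\hat\lambda_j(f_t)=\int_M\log\|\wedge^j Df_t\|\,\mathrm{d}m$ is not correct as written (the operator-norm integral is only an upper bound by subadditivity); the right integrand is the expansion of the invariant $j$-flag $\tilde E_1\oplus\cdots\oplus\tilde E_j$, and it is the cohomological manipulation with the specific Oseledets vectors (\cref{l.formula_exponent}) that localizes the exponent change to the support of the perturbation and turns it into the integral of $\log\det_j Dh_t$.
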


So the numbers $a_j$ allow us to move some of the summed exponents $\hat{\lambda}_j$ faster than others, but the overall movement is controlled by the scaling factor $\delta$. Let us emphasize that a main point of the proposition is that the dependence of $\delta$ on $f$ is only through $u$ and $\sigma$, and that is the key for the possible iteration of the proposition to obtain \cref{t.majorize}.

The following consequence is intuitively obvious (see \cref{f.shaky_cube}):

\begin{corollary}\label{c.cube_image}
In \cref{p.central}, 
the set $\Lambda \coloneqq \{\hat{\boldsymbol{\lambda}}(f_t) \st t \in [0,1]^{d-1} \}$ satisfies:
\begin{equation*} 
\prod_{j=1}^{d-1} \, [\hat{\lambda}_j(f) - 2\delta a_j, \,  \hat{\lambda}_j(f) - \delta a_j] \, 
\, \subseteq \,
\Lambda
\, \subseteq \, 
\prod_{j=1}^{d-1} \, [\hat{\lambda}_j(f) - 4\delta a_j, \,  \hat{\lambda}_j(f) + \delta a_j] \, .
\end{equation*}
\end{corollary}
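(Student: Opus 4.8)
The plan is to deduce \cref{c.cube_image} from \cref{p.central} purely topologically, using a standard degree-theoretic argument. First I would record the trivial inclusion: by the two-sided bounds \eqref{e.box_bounds}, every point $\hat{\boldsymbol\lambda}(f_t)$ lies in $\prod_{j=1}^{d-1}[\hat\lambda_j(f)-4\delta a_j,\,\hat\lambda_j(f)+\delta a_j]$, which gives $\Lambda$ is contained in the outer box. This direction needs nothing beyond the stated inequalities.

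For the harder inclusion, I would set up a continuous map $\Phi\colon[0,1]^{d-1}\to\R^{d-1}$ by $\Phi(t)\coloneqq\hat{\boldsymbol\lambda}(f_t)$, which is continuous since $t\mapsto f_t$ is continuous into $\Diff_m^\reg(M)$ and the Lyapunov exponents depend continuously on the dynamics in the presence of a simple dominated splitting (formula \eqref{e.exp_integrals}). It is convenient to rescale coordinates: define $\Psi\colon[0,1]^{d-1}\to\R^{d-1}$ by setting the $j$-th coordinate of $\Psi(t)$ to be $\big(\hat\lambda_j(f)-\Phi_j(t)\big)/(\delta a_j)$, so that \eqref{e.box_bounds}, \eqref{e.box_top}, \eqref{e.box_bottom} translate into: $\Psi_j(t)\in(-1,4)$ always; $t_j=0\Rightarrow\Psi_j(t)\in(-1,1)$; and $t_j=1\Rightarrow\Psi_j(t)\in(2,4)$. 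I want to show the image of $\Psi$ contains the cube $\prod_{j=1}^{d-1}[1,2]$, since pulling back through the (orientation-reversing, affine) rescaling this is exactly $\prod_{j=1}^{d-1}[\hat\lambda_j(f)-2\delta a_j,\,\hat\lambda_j(f)-\delta a_j]\subseteq\Lambda$.

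The claim that $\Psi$ hits every point of $[1,2]^{d-1}$ is the Poincar\'e--Miranda theorem (the cube analogue of the intermediate value theorem): a continuous map $\Psi\colon[0,1]^{d-1}\to\R^{d-1}$ such that on the face $\{t_j=0\}$ the $j$-th component is $<1$ and on the face $\{t_j=1\}$ the $j$-th component is $>2$ must take every value in $[1,2]^{d-1}$. Concretely, fix a target $y=(y_1,\dots,y_{d-1})\in[1,2]^{d-1}$ and consider the map $t\mapsto\Psi(t)-y$; on the face $t_j=0$ its $j$-th coordinate is $\Psi_j(t)-y_j<1-1=0$, and on the face $t_j=1$ its $j$-th coordinate is $\Psi_j(t)-y_j>2-2=0$, so by Poincar\'e--Miranda it has a zero, i.e.\ $y\in\Psi([0,1]^{d-1})$. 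I expect this topological step to be the only real content; it can either be cited (Poincar\'e--Miranda, equivalent to the Brouwer fixed point theorem) or, if a self-contained argument is preferred, proved in a line via a degree computation: a small perturbation makes $\Psi-y$ transverse without zeros on the boundary, the boundary sign conditions force the Brouwer degree of $\Psi-y$ at $0$ to equal that of the identity, namely $1\neq0$, hence a zero exists. The main obstacle is merely bookkeeping: making sure the strict inequalities on the faces are correctly matched to the Poincar\'e--Miranda hypotheses after the orientation-reversing rescaling, and noting that the open intervals in \eqref{e.box_top}--\eqref{e.box_bottom} give strict inequalities on the faces with room to spare (the relevant gap is $1<2$), so the degree argument applies without any boundary degeneracy.
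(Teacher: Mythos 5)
Your proposal is correct and follows essentially the same route as the paper: the outer inclusion is immediate from \eqref{e.box_bounds}, and the inner inclusion is a topological surjectivity argument on the cube. The only difference is packaging: the paper formulates and proves a bespoke topological lemma (\cref{l.topological}) directly from the ``no retraction of a cube onto its boundary'' fact, whereas you invoke the Poincar\'e--Miranda theorem (and indicate a degree argument as an alternative). These are equivalent at bottom, and your bookkeeping after the affine rescaling is correct: $\Psi_j\in(-1,4)$ always, $\Psi_j\in(-1,1)$ on $\{t_j=0\}$, $\Psi_j\in(2,4)$ on $\{t_j=1\}$, so for any target $y\in[1,2]^{d-1}$ the sign conditions of Poincar\'e--Miranda are met with strict inequalities on the faces, yielding a preimage.
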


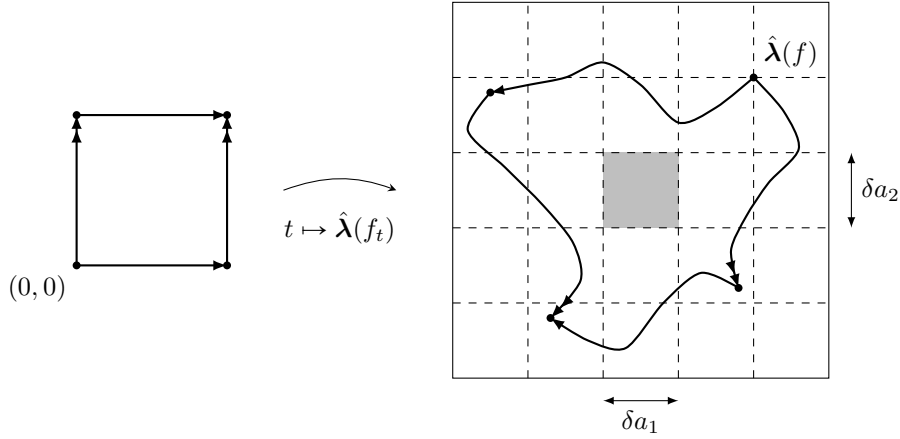
\begin{figure}[htb]
	\begin{center}
	\begin{tikzpicture}[font=\small, >=latex]

		\begin{scope}[shift={(-4,2.5)}]
			\draw [->,thick,line cap=round] (-1,-1)--(1,-1);
			\draw [->>,thick,line cap=round] (-1,-1)--(-1,1);
			\draw [->>,thick,line cap=round] (1,-1)--(1,1);
			\draw [->,thick,line cap=round] (-1,1)--(1,1);
			\fill (-1,-1) circle (1.5pt);
			\fill (1,-1) circle (1.5pt);
			\fill (-1,1) circle (1.5pt);
			\fill (1,1) circle (1.5pt);
			\node [below left] at (-1,-1) {$(0,0)$};
			\draw[-stealth] (1.75,0) to [out=20,in=160] (3.25,0);
			\node at (2.5,-.5) {$t \mapsto \hat{\boldsymbol{\lambda}}(f_t)$};
		\end{scope}
		
		\draw (0,0) rectangle (5,5);
		\fill[gray,opacity=.5] (2,2) rectangle (3,3);
				
		\draw [thin,dashed] (0,1)--(5,1);
		\draw [thin,dashed] (0,2)--(5,2);
		\draw [thin,dashed] (0,3)--(5,3);
		\draw [thin,dashed] (0,4)--(5,4);
		\draw [thin,dashed] (1,0)--(1,5);
		\draw [thin,dashed] (2,0)--(2,5);
		\draw [thin,dashed] (3,0)--(3,5);
		\draw [thin,dashed] (4,0)--(4,5);
				
		\coordinate (NW) at (4,4);
		\coordinate (NE) at (.5, 3.8);
		\coordinate (SW) at (3.8, 1.2);
		\coordinate (SE) at (1.3, .8);
						
		\draw [->,thick,line cap=round] plot [smooth] coordinates {(NW) (3.5,3.6) (3, 3.4) (2.5, 3.9) (2, 4.2) (1.5, 4) (1,3.9) (NE)};
	
		\draw [->>,thick,line cap=round] plot [smooth] coordinates {(NW) (4.4, 3.55) (4.6, 3.05) (4.2, 2.6) (3.9, 2.15) (3.7, 1.7) (SW)};
	
		\draw [->>,thick,line cap=round] plot [smooth] coordinates {(NE) (.2, 3.3) (.7, 2.8) (1.2, 2.3) (1.6, 1.8) (1.7, 1.3) (SE)};
	
		\draw [->,thick,line cap=round] plot [smooth] coordinates {(SW) (3.3,1.4) (2.8, 1) (2.3, .4) (1.8, .5) (SE)};

		\fill (NW) circle (1.5pt);
		\fill (NE) circle (1.5pt);
		\fill (SW) circle (1.5pt);
		\fill (SE) circle (1.5pt);
		\node [above right] at (NW) {$\hat{\boldsymbol{\lambda}}(f)$};

		\draw [<->] (2,-.3)--(3,-.3) node[below,midway]{$\delta a_1$};
		\draw [<->] (5.3,2)--(5.3,3) node[right,midway]{$\delta a_2$};
		
	\end{tikzpicture}
	\end{center}
	\caption{Illustration of \cref{p.central} for $d=3$. The images of the edges of the square $[0,1]^2$ under the map $t \mapsto \hat{\boldsymbol{\lambda}}(f_t)$ stay on the strips determined by conditions \eqref{e.box_top} and \eqref{e.box_bottom}. \cref{c.cube_image} tells us that the image of this map is a set $\Lambda$ that contains the small gray square and is contained in the big square.}
	\label{f.shaky_cube}
\end{figure}

\begin{proof}[Proof of \cref{c.cube_image}]
The second inclusion comes from \eqref{e.box_bounds}.
For the first one, we need the following topological 
fact:

\begin{lemma}\label{l.topological}
Let $\phi = (\phi_1,\dots,\phi_{d-1}) \colon [-1,1]^{d-1} \to \R^{d-1}$ be a continuous map such that for every $z = (z_1,\dots,z_{d-1}) \in [-1,1]^{d-1}$ and every $j \in \{1,\dots,d-1\}$ we have:
\[
\begin{gathered}
z_j = -1 \quad \Rightarrow \quad \phi_j(z) < -\tfrac{1}{3} \, , \\
z_j = \phantom{+}1 \quad \Rightarrow \quad \phi_j(z) > \phantom{+}\tfrac{1}{3} \, .
\end{gathered}
\]
Then the image of $\phi$ contains the cube $[-\tfrac{1}{3},\tfrac{1}{3}]^{d-1}$.
\end{lemma}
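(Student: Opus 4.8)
The plan is to recognize Lemma~\ref{l.topological} as an instance of the Poincaré--Miranda theorem, the $n$-dimensional generalization of the intermediate value theorem (and a classical equivalent of Brouwer's fixed-point theorem). Recall its statement: if $g = (g_1,\dots,g_n) \colon [-1,1]^n \to \R^n$ is continuous and, for each $j$, one has $g_j \le 0$ on the face $\{z_j = -1\}$ and $g_j \ge 0$ on the face $\{z_j = +1\}$, then $g$ vanishes somewhere in the cube. Note that this requires no constraint on the values of $g$ in the interior, only the sign conditions on the boundary faces, which is exactly the shape of our hypotheses.

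First I would fix an arbitrary target point $p = (p_1,\dots,p_{d-1})$ in the small cube $[-\tfrac13,\tfrac13]^{d-1}$ and set $g \coloneqq \phi - p$. On the face $\{z_j = -1\}$ the hypothesis gives $\phi_j(z) < -\tfrac13$, so $g_j(z) = \phi_j(z) - p_j < -\tfrac13 - p_j \le 0$ since $p_j \le \tfrac13$; symmetrically, on $\{z_j = +1\}$ one gets $g_j(z) > \tfrac13 - p_j \ge 0$. Thus $g$ meets the sign hypotheses of Poincaré--Miranda, so there is $z^\ast \in [-1,1]^{d-1}$ with $g(z^\ast) = 0$, i.e.\ $\phi(z^\ast) = p$. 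Since $p$ was arbitrary in $[-\tfrac13,\tfrac13]^{d-1}$, this cube is contained in the image of $\phi$, which is the claim.

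If one prefers a self-contained argument rather than quoting Poincaré--Miranda, the same conclusion follows from a one-line Brouwer-degree computation: with $g = \phi - p$ as above, the straight-line homotopy $g_s(z) \coloneqq (1-s)\,g(z) + s\,z$ never vanishes on $\partial [-1,1]^{d-1}$. Indeed, if $z_j = -1$ then its $j$-th coordinate $(1-s)\,g_j(z) - s$ is strictly negative for every $s \in [0,1]$ (both summands are $\le 0$ and they are not simultaneously zero), and likewise on $\{z_j = +1\}$; hence $g$ is homotopic, through maps nonvanishing on the boundary, to the identity, so $\deg\big(g,\interior [-1,1]^{d-1},0\big) = 1$ and $g$ has a zero.

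Either way there is no genuine obstacle here; the only point requiring care is the bookkeeping of signs, making sure that the \emph{weak} inequalities demanded by Poincaré--Miranda (equivalently, the non-vanishing along the homotopy) hold for every $p$ in the \emph{closed} small cube, in particular on its boundary where the available strict inequality $|p_j|<\tfrac13$ degenerates to $|p_j| \le \tfrac13$. As checked above, the strict inequalities in the hypothesis absorb this degeneration, so the argument goes through for all such $p$.
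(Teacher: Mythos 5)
Your argument is correct, and it takes a genuinely different route from the paper's. The paper argues by contradiction: assuming some $w\in\tfrac13 C\setminus\phi(C)$, it builds an auxiliary map $\psi\colon C\to\R^{d-1}$ that rescales $\phi$ on $\tfrac12 C$, interpolates linearly to the identity on the outer shell (so $\psi|_{\partial C}=\id$), checks that $w\notin\psi(C)$ using the boundary sign conditions, and then composes with a retraction of $\R^{d-1}\setminus\{w\}$ onto $\partial C$ to manufacture a retraction of $C$ onto $\partial C$, contradicting the no-retraction theorem. You instead observe that for each target $p$ in $\tfrac13 C$, the map $g=\phi-p$ satisfies precisely the face sign conditions of Poincaré--Miranda, so it has a zero, directly; your alternative via the linear homotopy to the identity and $\deg(g,\cdot,0)=1$ is the standard proof of Poincaré--Miranda, so the two versions you give are really the same. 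What your route buys is brevity and directness --- no auxiliary interpolating map, and the target point is handled uniformly rather than by contradiction. What the paper's route buys is self-containment: it bottoms out at the no-retraction theorem without invoking (or reproving) Poincaré--Miranda or degree theory by name, which keeps the prerequisites minimal for a paper whose audience is dynamicists rather than topologists. Both ultimately rest on the same Brouwer-type input, so the choice is a matter of exposition; your sign bookkeeping at the boundary of the small cube (weak inequalities absorbed by the strict hypotheses) is handled correctly.
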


\begin{proof}
Let $C \coloneqq [-1,1]^{d-1}$.
Note that:
\begin{equation}\label{e.remark}
\text{$\forall$ $z \in \partial C$, the segment $[z,\phi(z)]$ does not intersect the cube $\tfrac{1}{3}C$.}
\end{equation}
Consider the map $\psi \colon C \to \R^{d-1}$ that coincides with a rescaled version of $\phi$ on the subcube $\tfrac{1}{2}C$, and on the remaining shell interpolates linearly between $\phi|_{\partial C}$ and $\id|_{\partial C}$.
More precisely, letting $\|\mathord{\cdot}\|_\infty$ denote the maximum norm in $\R^{d-1}$ (whose unit closed ball is the cube $C$), we define: 
\[
\psi(z)
\coloneqq 
\begin{cases}
\phi(2z) &\text{if $\|z\|_\infty \le 1/2$;}\\
 2(1-\|z\|_\infty) \phi \left( \|z\|_\infty^{-1} z\right) + (2-\|z\|_\infty^{-1}) z &\text{otherwise.}
\end{cases}
\]
In particular, $\psi$ coincides with the identity on the boundary $\partial C$.

By contradiction, suppose that $\tfrac{1}{3}C \setminus \phi(C)$ contains a point $w$.
Then it follows from observation \eqref{e.remark} that the image of $\psi$ does not contain $w$.
Fix a retraction $\pi$ of $\R^{d-1}\setminus \{w\}$ onto $\partial C$.
Then $\pi \circ \psi$ is a retraction of the cube $C$ onto its boundary.
It is a known fact that no such map exists.
This contradiction completes the proof of the \lcnamecref{l.topological}.
\end{proof}

Now \cref{c.cube_image} follows by an affine change of coordinates.
Namely, consider the map: 
\begin{equation*}
\phi(z) \coloneqq B \left( \hat{\boldsymbol{\lambda}}\left(f_{A(z)}\right) \right) 
\end{equation*}
where $A = (A_1,\dots,A_{d-1})$, $B=(B_1,\dots,B_{d-1})$ are the maps:
\[
A_j(z_1,\dots,z_{d-1})   \coloneqq \frac{1 - z_j}{2} \, , \quad
B_j(\xi_1,\dots,\xi_{d-1})\coloneqq 1 + \frac{2}{3} \, \frac{\xi_j-\hat{\lambda}_j(f)}{\delta a_j} \, .
\]
By \eqref{e.box_top}, if $z_j=1$, then $\tfrac{1}{3}<\phi_j(z)<\tfrac{5}{3}$, while
by \eqref{e.box_bottom}, if $z_j=-1$, then $-\tfrac{5}{3}<\phi_j(z)<-\tfrac{1}{3}$.
So we can apply \cref{l.topological} and conclude the proof of \cref{c.cube_image}.
\end{proof}

\begin{proof}[Proof of \cref{t.majorize}]
Let $f$ and $\boldsymbol{\xi}$ be as in the statement of the \lcnamecref{t.majorize}.
Let 
\[
\sigma \coloneqq \tfrac{1}{2} \min \big\{ \mathsf{g}_u(\boldsymbol{\lambda}(f)) , \mathsf{g}_u(\boldsymbol{\xi}) \big\} \, . 
\]
Recalling \eqref{e.def_T}, let $\hat{\boldsymbol{\xi}} \coloneqq T(\boldsymbol{\xi})$ and
$(a_1, \dots, a_{d-1}) \coloneqq \hat{\boldsymbol{\lambda}}(f) - \hat{\boldsymbol{\xi}}$.
Since $\boldsymbol{\lambda}(f)$ strictly majorizes $\boldsymbol{\xi}$, each $a_j$ is positive.
The function $\mathsf{g}_u \circ T^{-1} \colon \R^{d-1} \to \R$ is concave (since it is the minimum of affine functions), and it follows that this function is $\ge 2\sigma$ on the segment 
$[ \hat{\boldsymbol{\xi}} , \hat{\boldsymbol{\lambda}}(f)]$.
By continuity, we can find a small positive $\delta_0 < 1$ such that 
the function $\mathsf{g}_u \circ T^{-1}$ is $\ge \sigma$ on the following neighborhood of the segment (pictured in \cref{f.proof_thrm}):
\[
V \coloneqq [ \hat{\boldsymbol{\xi}} , \hat{\boldsymbol{\lambda}}(f)] 
+ \prod_{j=1}^{d-1} [ - 4\delta_0 a_j, \delta_0 a_j] \, .
\]

Let $\delta = \delta (a_1, \dots, a_{d-1}, \sigma, \delta_0)$ be given by \cref{p.central}.
Let $n \coloneqq \lfloor \delta^{-1} \rfloor$; this is a positive integer since $0 < \delta < \delta_0 < 1$.
For each $i \in \{0,1,\dots,n\}$ let
\[
\boldsymbol{\eta}_i \coloneqq (1-\tfrac{i}{n})\hat{\boldsymbol{\lambda}}(f) + \tfrac{i}{n}\hat{\boldsymbol{\xi}} \, .
\]
Note that $\tfrac{1}{n} \in [\delta, 2 \delta]$.
Therefore, for each $i \in \{0,\dots,n-1\}$,
\begin{equation}\label{e.smart_step}
\boldsymbol{\eta}_{i+1} - \boldsymbol{\eta}_i \in \prod_{j=1}^{d-1}  [- 2\delta a_j, - \delta a_j] \, .
\end{equation}

We now construct a continuous path $(f_s)_{s \in [0,1]}$ of conservative Anosov diffeomorphisms as follows.
Applying \cref{p.central} to the diffeomorphism $f_0 \coloneqq f$,
we obtain a certain family of Anosov diffeomorphisms $(g_{0,t})$, where $t$ runs in the cube $[0,1]^{d-1}$.
Since $\hat{\boldsymbol{\lambda}}(f_0) = \boldsymbol{\eta}_0$, by \cref{c.cube_image} and \eqref{e.smart_step}, there exists $t_0$ in the cube such that 
$\hat{\boldsymbol{\lambda}}(g_{0,t_0}) = \boldsymbol{\eta}_1$.
Let $f_{1/n} \coloneqq g_{0,t_0}$, 
and define $f_s$ for $s$ in the interval $[0,1/n]$ by 
$f_s \coloneqq g_{0, n t_0 s}$.
Note that these diffeomorphisms obey the gap condition $\mathsf{g}_u(\boldsymbol{\lambda}(f_s)) \ge \sigma$.
We continue recursively in the obvious way:
we apply \cref{p.central,c.cube_image} to the diffeomorphism $f_{1/n}$, extend the family $f_t$ to the interval $[1/n,2/n]$ and so on.
We end up defining a path $(f_s)_{s \in [0,1]}$ 
of conservative Anosov diffeomorphisms of unstable index $u$ admitting simple dominated splittings, 
such that $\hat{\boldsymbol{\lambda}}(f_{i/n}) = \boldsymbol{\eta}_i$ for each $i \in \{0,\dots, n\}$.
In particular, $\hat{\boldsymbol{\lambda}}(f_1) = \hat{\boldsymbol{\xi}}$,
or equivalently, $\boldsymbol{\lambda}(f_1) = \boldsymbol{\xi}$,
as desired.
\end{proof}

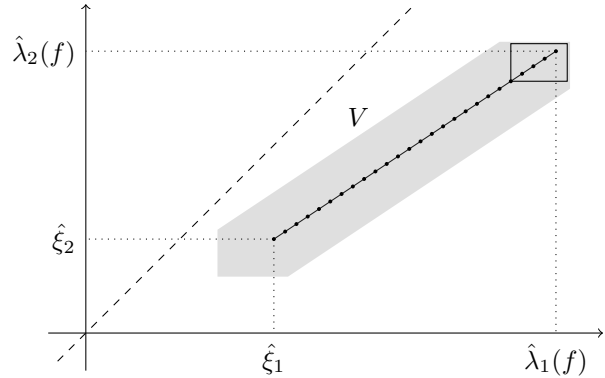
\begin{figure}[htb]
	\begin{center}
	\begin{tikzpicture}[font=\small,scale=2.5]
			\draw[->] (-.2,0) -- (2.75,0); 
			\draw[->] (0,-.2) -- (0,1.75); 
			\draw[dashed] (-.15,-.15) -- (1.75,1.75); 
			\draw (1,.5) -- (2.5,1.5); 
			
			\fill[gray,opacity=.25] (.7,.3)--(1.075,.3)--(2.575,1.3)--(2.575,1.55)--(2.2,1.55)--(.7,.55)--cycle;
			\node at (1.45,1.05)[above]{$V$};


			\foreach \i in {0,1,...,25}{
				\fill (1+.06*\i,.5+.04*\i) circle (.3pt);
			}
			\draw (2.26,1.34) rectangle (2.56,1.54);
			
			\draw[dotted](1,.5)--(1,0)		node[below]{$\hat{\xi}_1$};
			\draw[dotted](1,.5)--(0,.5)		node[left]{$\hat{\xi}_2$};
			\draw[dotted](2.5,1.5)--(2.5,0)	node[below]{$\hat{\lambda}_1(f)$};
			\draw[dotted](2.5,1.5)--(0,1.5)	node[left]{$\hat{\lambda}_2(f)$};
		
	\end{tikzpicture}
	\end{center}
	\caption{Illustration of the proof of \cref{t.majorize} with $d=3$, $u=1$. The function $\mathsf{g}_u \circ T^{-1}$ is positive on the sector between the horizontal positive semi-axis and the diagonal. The gray region is the neighborhood $V$, and the marked points along the segment $[ \hat{\boldsymbol{\xi}} , \hat{\boldsymbol{\lambda}}(f)]$ are the $\boldsymbol{\eta}_i$'s. For the first perturbation $(g_{0,t})$, the corresponding hatted Lyapunov vector $\hat{\boldsymbol{\lambda}}(g_{0,t})$ stays inside the upper right rectangle, and hits $\boldsymbol{\eta}_1$ for some parameter $t=t_0$.}
	\label{f.proof_thrm}
\end{figure}

\begin{remark}[Prescribing paths of spectra]
The proof of \cref{t.majorize} can be easily adapted so that the path  $(\boldsymbol{\lambda}(f_t))_{t\in [0,1]}$ is $C^0$-close to any given path from $\boldsymbol{\lambda}(f)$ to $\boldsymbol{\xi}$ that satisfies property \eqref{i.spec_12}  and is monotone with respect to the majorization partial order.
\end{remark}

\medskip

In \cref{s.adapted,s.damping,s.model} we establish several preliminary results which will be eventually used to prove the central proposition (\cref{p.central}) in \cref{s.central_proof}.

\section{Lyapunov metrics and charts} \label{s.adapted}

\subsection{Lyapunov metrics}

There are different ways to introduce Riemannian metrics that are suitable to study particular classes of hyperbolic dynamical systems. Such metrics are usually called Lyapunov or adapted metrics, and both terms are used in more than one meaning. For various versions of such Lyapunov metrics, see e.g.\ \cite{BP, Gourmelon, KH}.
Here we will construct a variant which specifically fits our setting.

Suppose that $f \in \Diff_m^\reg(M)$ is a conservative Anosov diffeomorphism admitting a simple dominated splitting $TM = E_1 \oplus \cdots \oplus E_d$. Given a $C^0$ Riemannian metric $\biangle{\mathord{\cdot}, \mathord{\cdot}}$, 
let $\tribar{\mathord{\cdot}}$ denote the induced vector norm, 
and consider the \emph{expansion functions} $\chi_1$, \dots, $\chi_d \colon M \to \R$ defined by:
\begin{equation}\label{e.def_chi}
\chi_j(x) \coloneqq \log  \frac{\tribar{Df(x) v}}{\tribar{v}} , \quad
\text{for arbitrary nonzero } v\in E_j(x) \, .
\end{equation}
It is clear that each $\chi_j$ is continuous and its integral is $\lambda_j(f)$.
We say that $\biangle{\mathord{\cdot}, \mathord{\cdot}}$ is a \emph{Lyapunov metric} if:
\begin{itemize}
\item the bundles $E_j$ are mutually orthogonal with respect to  $\biangle{\mathord{\cdot}, \mathord{\cdot}}$;
\item the expansion functions satisfy, for every $x \in M$,
\begin{equation}\label{e.adapted_pointwise}
\chi_1(x) > \chi_2(x) > \cdots > \chi_u(x) > 0 > \chi_{u+1}(x) > \cdots > \chi_d(x) \, ,
\end{equation}
where $u$ is the unstable index of $f$.
\end{itemize}

\begin{proposition}[Lyapunov metric with $L^1$ estimate] \label{p.adapted_metric}
Let $f \in \Diff_m^\reg(M)$ be a conservative Anosov diffeomorphism admitting a simple dominated splitting. 
Furthermore, let $\epsilon>0$.
Then there exists a Lyapunov metric such that each expansion function is $L^1$-close to a constant:
\begin{equation}\label{e.adapted_L1}
\int_M \big| \chi_j(x) - \lambda_j(f) | \dd m(x) < \epsilon \, .
\end{equation}
\end{proposition}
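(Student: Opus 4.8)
The plan is to construct the Lyapunov metric by first building an auxiliary metric in which the bundles $E_j$ are orthogonal and the one-iterate expansion rates are controlled in the $L^1$-average, and then correcting it to make the expansion functions pointwise strictly ordered and sign-separated as in \eqref{e.adapted_pointwise}. The starting point is the classical Birkhoff-averaging trick used to produce adapted metrics (as in \cite{Gourmelon}), but applied with respect to the reference measure $m$ rather than uniformly in $x$. Concretely, fix any background $C^0$ metric in which the $E_j$ are orthogonal (possible since the dominated splitting is continuous); call its expansion functions $\chi_j^{(0)}$, so that $\int_M \chi_j^{(0)} \dd m = \lambda_j(f)$. For a large integer $N$, define a new metric on each bundle $E_j$ by the Cesàro-type average
\[
\tribar{v}_N^2 \coloneqq \frac{1}{N} \sum_{k=0}^{N-1} \left( \sum_{\ell=0}^{k} w_\ell \right) \bigl\| Df^{-\ell}\cdots \bigr\|^{\ldots}
\]
— more cleanly, one takes $\tribar{v}_{j,N} \coloneqq \bigl( \tfrac1N \sum_{k=0}^{N-1} e^{-2 S_k \chi_j^{(0)}} \|Df^k v\|^2 \bigr)^{1/2}$ for $v \in E_j$, where $S_k \chi_j^{(0)}(x) = \sum_{i=0}^{k-1}\chi_j^{(0)}(f^i x)$ is the Birkhoff sum; keeping the bundles orthogonal by declaring $E_i \perp E_j$. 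A direct computation shows that the new expansion function $\chi_{j,N}$ satisfies $\chi_{j,N}(x) = \chi_j^{(0)}(x) + \tfrac12 \log \frac{A_{N}(fx)}{A_N(x)}$ for an explicit positive function $A_N$, which is a coboundary-type correction, so $\int_M \chi_{j,N}\dd m = \lambda_j(f)$ exactly for every $N$. The content is that, by the $L^1$ (or $L^2$) ergodic theorem applied to $\chi_j^{(0)} - \lambda_j(f)$, the fluctuation $\|\chi_{j,N} - \lambda_j(f)\|_{L^1(m)}$ can be made $< \epsilon/2$ by choosing $N$ large: the averaging over $N$ consecutive iterates damps the oscillation of the Birkhoff sums exactly as in the standard proof that time-$N$ averages of an $L^1$ function converge in $L^1$.

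With such an auxiliary metric $\tribar{\cdot}_N$ in hand — in which the $E_j$ are orthogonal and each $\chi_{j,N}$ is $L^1$-close to $\lambda_j(f)$ — the remaining issue is that $\tribar{\cdot}_N$ need not satisfy the pointwise inequalities \eqref{e.adapted_pointwise}: the functions $\chi_{j,N}$ might cross or change sign at some points. To fix this I would rescale each bundle by a constant: replace $\tribar{v}_N$ on $E_j$ by $c_j \tribar{v}_N$ for suitable positive constants $c_j$. Rescaling $E_j$ by $c_j$ leaves $\chi_{j,N}$ unchanged (it is a ratio), so this alone does nothing. Instead the right move is a multiplicative coboundary correction within each bundle: choose a continuous positive function $\rho_j \colon M \to \R$ and replace the norm on $E_j$ by $\rho_j(x)\tribar{v}_N$, which changes $\chi_{j,N}$ to $\chi_{j,N}(x) + \log\rho_j(fx) - \log\rho_j(x)$; this again has integral $\lambda_j(f)$ and, with $\rho_j$ chosen appropriately (solving an approximate cohomological equation, or more simply smoothing an indicator of a small bad set), can be made to remove the small exceptional set where the ordering fails while perturbing the $L^1$ norm by less than $\epsilon/2$. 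Alternatively — and this is cleaner — since by domination and hyperbolicity there is $n_0$ with the one-$n_0$-iterate inequalities holding everywhere for the original metric, one passes from $f$ to $f$ but measures expansion over blocks of length $n_0$, i.e. one builds the averaged metric for the cocycle $Df^{n_0}$ and then takes its $n_0$-th root in the standard way; this automatically gives \eqref{e.adapted_pointwise} because over $n_0$ iterates the inequalities are robust, while the $L^1$-closeness survives since $\chi_j = \tfrac{1}{n_0}(\chi_j^{(0),n_0} + \text{coboundary})$ and the $n_0$-block expansion function still has integral $n_0\lambda_j(f)$.

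I expect the main obstacle to be reconciling the two requirements simultaneously: the $L^1$-closeness \eqref{e.adapted_L1} wants the metric to be a long-time average (to kill oscillation), while the pointwise strict ordering \eqref{e.adapted_pointwise} is a robustness/domination property that is cleanest over short blocks and can be spoiled by the very averaging that secures \eqref{e.adapted_L1}. The resolution is that these operations commute well enough: both ``average over $N$ iterates'' and ``work over $n_0$-blocks'' change each expansion function only by an additive continuous coboundary, hence never change its $m$-integral, so one can apply them in sequence — first pass to $n_0$-blocks to guarantee \eqref{e.adapted_pointwise}, then Birkhoff-average over $N$ blocks to force the $L^1$ bound, checking that $N$ can be taken large enough without destroying the (now open, uniform) ordering condition, perhaps at the cost of a harmless final uniform rescaling of the separation. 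The routine parts — verifying the coboundary formulas for $\chi_j$ under each modification, and invoking the $L^1$ ergodic theorem — I would not spell out in detail.
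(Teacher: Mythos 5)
Your overall strategy is sound---average over many iterates to tame oscillation, use coboundary corrections that preserve the integral, and invoke domination for the pointwise ordering---but your main concrete construction is degenerate. For $v\in E_j(x)$ one has $\|Df^k(x)v\| = e^{S_k\chi_j^{(0)}(x)}\|v\|$ by the very definition of $\chi_j^{(0)}$ as a one-step expansion rate and the cocycle identity, so every summand in your formula
$\tribar{v}_{j,N}^2 = \tfrac1N\sum_{k=0}^{N-1} e^{-2S_k\chi_j^{(0)}}\,\|Df^kv\|^2$
equals $\|v\|^2$, and the averaged metric coincides with the background metric. The ``correction term'' $A_N$ you compute is identically $1$, and the Birkhoff-averaging step you then invoke is being applied to the original expansion functions $\chi_j^{(0)}$ rather than to a genuinely new metric. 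The auxiliary metric as written gives no $L^1$ improvement whatsoever.

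The tension you flag in your last paragraph---that long averages secure \eqref{e.adapted_L1} but might spoil \eqref{e.adapted_pointwise}---is also illusory. Once $N$ is large enough that the $N$-step growth rates $\theta_j^{(N)}(x) := \log\|Df^N(x)|_{E_j}\|$ satisfy $\theta_1^{(N)}>\cdots>\theta_u^{(N)}>0>\theta_{u+1}^{(N)}>\cdots>\theta_d^{(N)}$ pointwise (which domination and hyperbolicity give), this ordering only improves for larger $N$; and the same $\theta_j^{(N)}/N$ converges to $\lambda_j(f)$ in $L^1$. So no two-stage or commutation argument is needed: a \emph{single} large $N$ delivers both properties. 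What you are missing is a metric whose expansion function equals $\theta_j^{(N)}/N$ exactly. The paper does this with a \emph{geometric}, not Cesàro, average: set $\tribar{v} := \prod_{n=0}^{N-1}\|Df^n(x)v\|^{1/N}$ for $v\in E_j(x)$, keeping the bundles orthogonal. A one-line telescoping calculation then gives $\tribar{Df(x)v}/\tribar{v} = (\|Df^N(x)v\|/\|v\|)^{1/N}$, i.e.\ $\chi_j=\theta_j^{(N)}/N$, after which \eqref{e.adapted_pointwise} and \eqref{e.adapted_L1} are immediate. Your second ``alternatively'' sketch (average over $n_0$-blocks and take an $n_0$-th root) is gesturing at this construction, but you do not write a formula for which the telescoping works, and you still treat the two requirements as needing separate stages.
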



We could smoothen the metrics given by the \lcnamecref{p.adapted_metric}, but in that case we would lose the orthogonality of the bundles of the dominated splitting, which is convenient for our later calculations.

\begin{proof}[Proof of \cref{p.adapted_metric}]
We fix an arbitrary Riemannian metric on $M$.
Consider the following continuous functions:
\[
\theta_j^{(n)} (x) \coloneqq 
\log \frac{\| Df^n(x) v \|}{\|v\|} 
\quad \text{for arbitrary nonzero } v\in E_j(x).
\]
For each $j$, the sequence above forms an additive cocycle with respect to the dynamics $f$.

By domination and hyperbolicity, if $N$ is sufficiently large then:
\begin{equation}\label{e.pre_pointwise}
\forall x \in M, \ 
\theta_{1}^{(N)}(x) > \cdots > \theta_u^{(N)}(x) > 0 > \theta_{u+1}^{(N)}(x) > \cdots > \theta_{d}^{(N)}(x).
\end{equation}
On the other hand, the functions $\theta_j^{(n)} / n$ 
converge $m$-almost everywhere to the constant $\lambda_j(f)$.
So, increasing $N$ if necessary, we assume that:
\begin{equation}\label{e.pre_L1}
\int_M \bigg| \frac{\theta_j^{(N)}(x)}{N} - \lambda_j(f) \bigg| \dd m(x) < \epsilon \, .
\end{equation}

Now, for each $x \in M$, $j \in \{1,\dots,d\}$, and $v \in E_j(x)$, we let
\begin{equation}\label{e.magic}
\tribar{v} \coloneqq 
\prod_{n=0}^{N-1} \| Df^n(x) v \|^{1/N} \, .
\end{equation}
This defines a norm on each one-dimensional bundle $E_j(x)$.
Consider the unique inner product on $T_x M$ that makes those bundles orthogonal and whose induced norm agrees with the definition above.

If $v\in E_j(x)$ is nonzero then, by telescopic multiplication,
\[
\frac{\tribar{Df(x) v}}{\tribar{v}} = 
\frac{\| Df^N(x) v \|^{1/N}}{\|v\|^{1/N}} \, . 
\]
This means that the expansion function $\chi_j$ defined by \eqref{e.def_chi} equals $\theta_j^{(N)}/N$.
Therefore, the desired properties \eqref{e.adapted_pointwise} and \eqref{e.adapted_L1} 
are just \eqref{e.pre_pointwise} and \eqref{e.pre_L1}.
\end{proof}

\begin{remark}
It follows from the proof that the Lyapunov metrics can be constructed so that 
the tempering property $|\chi_j \circ f - \chi_j| < \epsilon$ also holds.
\end{remark}

\begin{remark} 
Our proof used that the bundles of the dominated splitting are one-dimensional, so that the geometric average defined by the formula \eqref{e.magic} defines a norm.
If the bundles $E_j$ are higher dimensional, one can still adapt this trick by taking an appropriate notion of averaging on the symmetric space of inner products: see \cite[p.~1839]{Bochi_ICM}.
\end{remark}

\subsection{Lyapunov charts}\label{ss.adapted_charts}

\begin{proposition}[Lyapunov charts] \label{p.adapted_charts}
Suppose that $f\in \Diff^\reg_m(M)$ is a conservative Anosov diffeomorphism of unstable index $u$ admitting a simple dominated splitting $TM = E_1 \oplus \cdots \oplus E_d$,
and that $\biangle{\mathord{\cdot}, \mathord{\cdot}}$ is a Lyapunov metric.
	
Then for all $x \in M$, there exists a map
$\Phi_x \colon B_0 \to M$, where $B_0 \subset \R^d$ is a fixed closed ball centered at $0$, with the following properties:
\begin{enumerate}
\item \label{i.charts_1}
$\Phi_x(0) = x$;
\item \label{i.charts_2}
$\Phi_x$ is a smooth diffeomorphism onto its image; 
\item \label{i.charts_3}
$\Phi_x$ has constant Jacobian, that is, the push-forward of Lebesgue measure on $B_0$ equals the restriction of the measure $m$ to $\Phi_x(B_0)$ times a constant factor;
\item \label{i.charts_4}
the derivative $\cL_x \coloneqq D\Phi_x(0)$
takes the canonical basis $\{e_1,\dots,e_d\}$ of $\R^d$
to a basis $\{\cL_x(e_1), \dots, \cL_x(e_d)\}$ of $T_x M$ which is orthonormal for the Lyapunov metric $\biangle{\mathord{\cdot}, \mathord{\cdot}}_x$,
and, moreover, $\cL_x(e_j) \in E_j(x)$ for each~$j$.
\end{enumerate}
Furthermore, $\{\Phi_x \st x \in M\}$ is a relatively compact subset of $C^\infty(B_0,M)$. 
\end{proposition}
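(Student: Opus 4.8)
\textbf{Proof plan for \cref{p.adapted_charts}.}
The plan is to build each chart $\Phi_x$ as a composition of two maps: first a linear isomorphism $\cL_x \colon \R^d \to T_xM$ sending the canonical basis to a $\biangle{\cdot,\cdot}_x$-orthonormal basis adapted to the splitting, and then a ``volume-correcting'' exponential-type map $T_xM \to M$. The naive first attempt would be to set $\Phi_x = \exp_x \circ \cL_x$, where $\exp_x$ is the Riemannian exponential map of some fixed background smooth metric; this immediately gives properties \eqref{i.charts_1}, \eqref{i.charts_2} (for $B_0$ small enough, uniformly in $x$, by compactness of $M$), and \eqref{i.charts_4}. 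Property \eqref{i.charts_3} is the only one that fails: the Jacobian of $\exp_x \circ \cL_x$ at a point $v \in B_0$ is some continuous positive function $J_x(v)$ which equals a constant (namely $|\det \cL_x|$ with respect to the reference volume) at $v=0$ but need not be constant elsewhere.

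To fix the Jacobian, I would invoke Moser's theorem in the parametrized/quantitative form: given two smooth volume forms on the ball $B_0$ that agree near the boundary (or have equal total mass), there is a diffeomorphism of $B_0$ isotopic to the identity carrying one to the other, and this diffeomorphism depends continuously (indeed smoothly) on the data. Concretely, I would first shrink to a slightly smaller ball and multiply $J_x$ by a cutoff so that it becomes identically constant near $\partial B_0$ while preserving total mass (rescaling the constant appropriately), then apply Moser to obtain $\rho_x \colon B_0 \to B_0$ with $\rho_x(0)=0$, $D\rho_x(0) = \Id$, and such that $\exp_x \circ \cL_x \circ \rho_x$ has constant Jacobian. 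Setting $\Phi_x \coloneqq \exp_x \circ \cL_x \circ \rho_x$ then satisfies all four conditions: \eqref{i.charts_1} and \eqref{i.charts_4} because $\rho_x$ fixes $0$ with derivative the identity there, \eqref{i.charts_2} for $B_0$ uniformly small, and \eqref{i.charts_3} by construction.

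For the final relative compactness assertion, the key point is uniformity of every ingredient in $x$. The maps $x \mapsto \exp_x$ are smooth in $x$ (fixed smooth background metric on compact $M$); the assignment $x \mapsto \cL_x$ can be taken continuous in $x$ because the Lyapunov metric is continuous and the bundles $E_j$ are continuous (being subbundles of a dominated splitting), so one can locally choose continuous orthonormal frames adapted to the splitting and patch — or simply observe that the set of admissible $\cL_x$ is a compact set depending continuously on $x$; and the Moser corrections $\rho_x$ depend continuously on the Jacobian data $J_x$, which in turn depends continuously on $x$. Hence $\{\Phi_x\}$ is the continuous image of the compact space $M$ (or a compact parameter set over $M$ accounting for frame ambiguity) inside $C^\infty(B_0,M)$, hence relatively compact. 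I expect the main obstacle to be the careful bookkeeping needed to run Moser's theorem \emph{with continuous dependence on $x$ and with control near the boundary}, in particular arranging the cutoff of $J_x$ so that total masses match exactly while keeping everything continuous in $x$; the rest is routine once $B_0$ is fixed small enough using compactness of $M$.
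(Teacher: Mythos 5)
Your construction takes a genuinely different route from the paper's, and as written it has a gap in the Moser step that needs to be addressed.

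The paper does not use the Riemannian exponential or a pointwise Moser correction. Instead it fixes, once and for all, a finite \emph{conservative atlas} $\{F_i \colon U_i \subset \R^d \to M\}$, i.e.\ charts pushing Lebesgue measure to $m$ (a standard elementary fact about volume forms), picks the linear isomorphism $\cL_x$ exactly as you do, and sets $\Phi_x(z) = F_i\big(F_i^{-1}(x) + (DF_i^{-1}(x)\circ\cL_x)(z)\big)$. Because the inner map is \emph{affine} and $F_i$ is conservative, the Jacobian of $\Phi_x$ is automatically constant, and $D\Phi_x(0)$ telescopes to $\cL_x$. Uniform boundedness of derivatives (from finiteness of the atlas and compactness of $M$) then gives relative compactness without any discussion of continuity of $x \mapsto \cL_x$. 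This bypasses all the per-point Moser bookkeeping.

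On your version: the step ``apply Moser to obtain $\rho_x \colon B_0 \to B_0$ with $\rho_x(0)=0$, $D\rho_x(0)=\Id$, and constant Jacobian for $\exp_x\circ\cL_x\circ\rho_x$'' over-constrains the problem. If you insist that $\rho_x$ is a self-diffeomorphism of $B_0$ (which is what the boundary cutoff buys), then mass conservation forces the constant Jacobian of $\Phi_x = \exp_x\circ\cL_x\circ\rho_x$ to be the \emph{average} of $J_x$ over $B_0$. On the other hand, $\rho_x(0)=0$ with $D\rho_x(0)=\Id$ forces that same constant to equal the pointwise value $J_x(0)=|\det\cL_x|$. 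These two numbers do not agree in general, so the three requirements $\rho_x(B_0)=B_0$, $D\rho_x(0)=\Id$, and constant Jacobian are jointly incompatible as stated. The issue is fixable — either drop the self-map requirement and solve a local Dacorogna--Moser problem for the Jacobian equation with a prescribed $1$-jet at $0$ (which is consistent since the equation already forces $\det D\rho_x(0)=1$), or keep the self-map, accept $D\rho_x(0)\neq\Id$, and repair property \eqref{i.charts_4} by post-composing with the fixed linear map $D\rho_x(0)^{-1}$ (shrinking $B_0$ once more). Either fix requires more care than your sketch provides, and also complicates the continuity-in-$x$ bookkeeping you need for the relative compactness claim. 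The conservative-atlas approach avoids all of this.
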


\begin{proof}
As it is well-known and elementary (see e.g.\ \cite[p.~6]{Koba}),
the manifold $M$ admits a conservative atlas, that is, an atlas whose charts $F_i \colon U_i \subset \R^d \to M$ are such that the push-forward of Lebesgue measure on $U_i$ coincides with the measure $m$ restricted to $F_i(U_i)$. By compactness, we assume that this atlas is finite.

Now, given $x \in M$, let $\cL_x \colon \R^d \to T_x M$ be a linear isomorphism that sends the coordinate axes to the bundles $E_{1}$, \dots, $E_{d}$, and such that the push-forward of the standard inner product is the Lyapunov metric $\biangle{\mathord{\cdot}, \mathord{\cdot}}_x$. 
Choose $i$ such that $F_i(U_i) \ni x$, and define 
\[
\Phi_x (z) \coloneqq F_i \left( F_i^{-1}(x) +  (DF_i^{-1}(x) \circ \cL_x) (z) \right) \, ,
\]
for every $z$ in a sufficiently small closed ball $B_0 \subset \R^d$ around $0$.
It is clear that these maps have the asserted properties \eqref{i.charts_1}--\eqref{i.charts_4}.
Moreover, all their derivatives are uniformly bounded, so we obtain relative compactness of the family.
\end{proof}

The maps $\Phi_x$ given by the \lcnamecref{p.adapted_charts} are called \emph{Lyapunov charts}.
Note that the derivative of $f$ at an arbitrary point $x\in M$ can be diagonalized using the Lyapunov charts as follows:
\begin{equation}\label{e.diagonal}
D ( \Phi_{f(x)}^{-1} \circ f \circ \Phi_x) (0)  =
\begin{pmatrix}
\pm e^{\chi_{1}(x)}	&			& 0 					\\
	 				& \ddots	&  						\\
0					& 			& \pm e^{\chi_{d}(x)}
\end{pmatrix}
\, .
\end{equation}

\section{Damping perturbations} \label{s.damping}

In this \lcnamecref{s.damping} we define a certain type of perturbations $\tilde f$ of an Anosov diffeomorphism $f$, called \emph{damping} perturbations. The idea comes from Baraviera and Bonatti \cite{BarBon} but, as one of the referees has pointed out to us, constructions of this type go back to Newhouse and Ma\~n\'e.

Damping perturbations are actually ``large perturbations'', in the sense that the $C^1$ or even the $C^0$ distance between $\tilde f$ and $f$ may be large. On the other hand, the support of the perturbation, i.e., the set $Z \coloneqq \{x \st \tilde f(x) \neq f(x)\}$ is required to be ``dynamically small'': the return times from $Z$ to itself are not smaller than a large number $N$. On $Z$ we impose a transversality condition: essentially we want the spaces $D\tilde{f}(x)E^\mathrm{u}(x)$ and $D\tilde{f}(x)E^\mathrm{s}(x)$ to be transverse to $E^\mathrm{s}(\tilde{f}(x))$ and $E^\mathrm{u}(\tilde{f}(x))$, respectively, where $E^\mathrm{u}\oplus E^\mathrm{s}$ denotes the hyperbolic splitting of $f$.
We show that if the least return time $N$ is large enough then the perturbed diffeomorphism $\tilde{f}$ is still Anosov. Moreover, on the set $Z$ the new unstable bundle $\tilde{E}^\mathrm{u}$ is close to the old one $E^\mathrm{u}$, while on the set $f(Z)$ they can be far apart. As we go upwards in the tower with base $Z$, the bundle $\tilde{E}^\mathrm{u}$ is attracted by $E^\mathrm{u}$, so it suffers some ``damping'' before returning to $Z$ and getting ``kicked'' again. See \cref{f.damping}. Actually we work with Anosov diffeomorphisms with simple dominated splitting; we show that domination also persists under damping perturbations. Let us give precise statements.

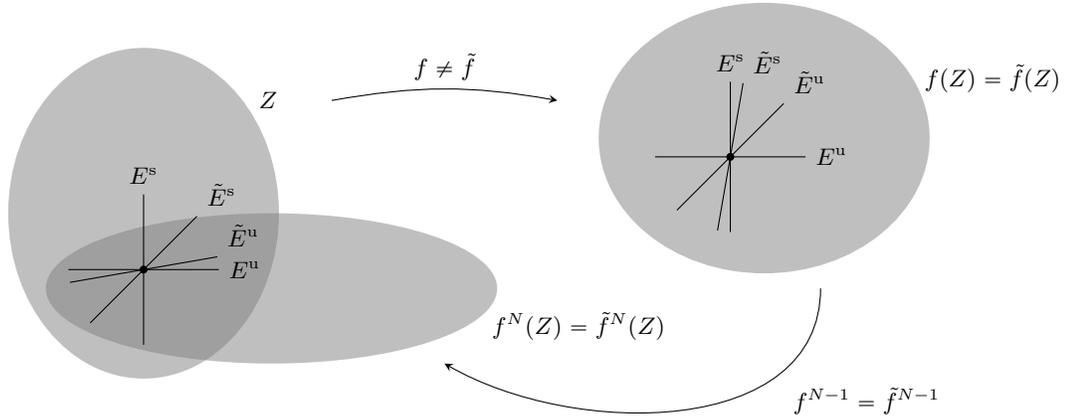
\begin{figure}[hbt]
	\begin{tikzpicture}[font=\footnotesize]
		
		\fill[gray,opacity=.5] (1.7,-1) ellipse [x radius = 3, y radius = 1];
		\node[anchor=west] at (4.5,-1.5) {$f^N(Z) = \tilde{f}^{N}(Z)$};
			
		\draw[-stealth] (9,-1) [out=270, in=330] to (4,-2);
		\node[anchor=west] at (8.5,-2.5) {$f^{N-1} = \tilde{f}^{N-1}$};

		\fill[gray,opacity=.5] (0,0) ellipse [x radius = 1.8, y radius = 2.2];
		\node[anchor=west] at (1.4,1.5) {$Z$};
		\begin{scope}[shift={(0,-.75)}]
		\draw (-1,0)--(1,0) node[right]{$E^\mathrm{u}$};
		\draw (0,-1)--(0,1) node[above]{$E^\mathrm{s}$};
		\draw (-.98,-.17)--(.98,.17) node[above right]{$\tilde{E}^\mathrm{u}$};
		\draw (-.71,-.71)--(.71,.71) node[above right]{$\tilde{E}^\mathrm{s}$};
		\fill (0,0) circle (1.5pt);
		\end{scope}

		\draw[-stealth] (2.5,1.5) to [out=10,in=170] (5.5,1.5);
		\node at (4,1.95) {$f \neq \tilde f$};
		
		\begin{scope}[shift={(8.25,1)}]
		\fill[gray,opacity=.5] (0,0) ellipse [x radius = 2.2, y radius = 1.8];	
		\node[anchor=west] at (2,.8) {$f(Z) = \tilde{f}(Z)$};
		\begin{scope}[shift={(-.45,-.25)}]
		\draw (-1,0)--(1,0) node[right]{$E^\mathrm{u}$};
		\draw (0,-1)--(0,1) node[above]{$E^\mathrm{s}$};
		\draw (-.71,-.71)--(.71,.71) node[above right]{$\tilde{E}^\mathrm{u}$};
		\draw (-.17,-.98)--(.17,.98) node[above right]{$\tilde{E}^\mathrm{s}$};
		\fill (0,0) circle (1.5pt);
		\end{scope}
		\end{scope}
		
	\end{tikzpicture}
	\caption{A damping perturbation $\tilde f$ of an Anosov diffeomorphism $f$.}
	\label{f.damping}
\end{figure}

\subsection{Definition of damping perturbations}

Assume given a conservative Anosov diffeomorphism $f\in \Diff^\reg_m(M)$ of unstable index $u$ and admitting a simple dominated splitting $TM = E_1 \oplus \cdots \oplus E_d$.
Fix a Lyapunov metric $\biangle{\mathord{\cdot}, \mathord{\cdot}}$.
Let us use the following notation:
\begin{equation}\label{e.vec_chi}
\boldsymbol{\chi}(x) \coloneqq (\chi_1(x), \dots, \chi_d(x)) \, ,
\end{equation}
where the $\chi_j$'s are the expansion functions \eqref{e.def_chi}.
By definition of Lyapunov metric, 
$\mathsf{g}_u \big( \boldsymbol{\chi}(x) \big) > 0$ for every $x \in M$, where $\mathsf{g}_u$ is the gap function \eqref{e.def_gap}.

For each $x\in M$, let
\begin{equation}\label{e.L}
\cL_x \colon \R^d \to T_x M
\end{equation}
be a linear map that takes the canonical basis $\{e_1,\dots,e_d\}$ of $\R^d$
to a basis $\{\cL_x(e_1), \dots, \cL_x(e_d)\}$ of $T_x M$ that is orthonormal for the Lyapunov metric $\biangle{\mathord{\cdot}, \mathord{\cdot}}_x$,
and, moreover, $\cL_x(e_j) \in E_j(x)$ for each $j$.
Note that the map $\cL_x$ is unique modulo composing from the right by a matrix of the form $\diag(\pm 1, \dots, \pm 1)$.
Taking quotient by this finite group, $\cL_x$ becomes unique and continuous.

For each $j \in \{1,\dots,d-1\}$ and $\tau>0$, 
we define the 
\emph{standard horizontal and vertical cones of index $j$ and opening $\tau$} as
the following subsets of Euclidean space $\R^d$:
\begin{align}
\label{e.hor_cone}
\cH_j(\tau) &\coloneqq 
\left\{ (z_1,\dots,z_d) \st 
z_{j+1}^2 + \dots + z_d^2 < \tau^2 (z_1^2 + \dots + z_j^2) \right\} \cup \{0\} \, , \\
\label{e.ver_cone}
\cV_j(\tau) &\coloneqq 
\left\{ (z_1,\dots,z_d) \st 
z_1^2 + \dots + z_j^2 < \tau^2 (z_{j+1}^2 + \dots + z_d^2) \right\}  \cup \{0\} \, .
\end{align}
Then we define the following continuous fields of cones on the tangent bundle $TM$:
\[
\cH_j(x,\tau) \coloneqq \cL_x (\cH_j(\tau)) \, , \quad 
\cV_j(x,\tau) \coloneqq \cL_x (\cV_j(\tau)) \, .
\]
By \eqref{e.diagonal}, we have the following invariance properties, for every $\tau>0$,:
\begin{align}
\label{e.hor_cone_field_inv}
Df(x)     \cH_j(x,\tau) &\subseteq \cH_j\big(f(x),     e^{-[\chi_j(x)-\chi_{j+1}(x)]} \tau\big) \, , \\
\label{e.ver_cone_field_inv}
Df^{-1}(x)\cV_j(x,\tau) &\subseteq \cV_j\big(f^{-1}(x),e^{-[\chi_j(f^{-1}(x))-\chi_{j+1}(f^{-1}(x))]} \tau\big) \, .
\end{align}

Let $P^\mathrm{u}$ and $P^\mathrm{s} : TM \to TM$ be the projections on the unstable and stable bundles of $f$, respectively, so that their sum is the identity.

\medskip

Fix numbers $\alpha > \beta >0$, $\kappa>0$, $\sigma > 0$, and $N \ge 2$.
We say that a diffeomorphism $\tilde f \in \Diff^\reg_m(M)$ is a \emph{$(\alpha,\beta,\kappa,\sigma,N)$-damping perturbation} of $f$ with respect to the metric $\biangle{\mathord{\cdot}, \mathord{\cdot}}$ if the following conditions hold:
\begin{enumerate}[label=(\roman*),ref=\roman*]

\item\label{i.damping_cones} 
for all $x \in M$ and $j \in \{1,\dots,d-1\}$, we have:
\begin{align*}
D\tilde{f}(x)     \cH_j(x,\beta) &\subseteq \cH_j(\tilde{f}(x),     \alpha) \, , \\
D\tilde{f}^{-1}(x)\cV_j(x,\beta) &\subseteq \cV_j(\tilde{f}^{-1}(x),\alpha) \, .
\end{align*} 

\item\label{i.damping_norm} 
For all $x \in M$, we have
\begin{alignat*}{3}
v &\in \cH_u(x,\beta) &\quad &\Rightarrow &\quad \tribar{P^\mathrm{u} \, D\tilde{f}(x) \, v} &\ge \kappa \, \tribar{P^\mathrm{u} \, v} \, , \\
v &\in \cV_u(x,\beta) &\quad &\Rightarrow &\quad \tribar{P^\mathrm{s} \, D\tilde{f}^{-1}(x) \, v} &\ge \kappa \, \tribar{P^\mathrm{s} \, v} \, .
\end{alignat*}

\item\label{i.damping_tower} 
Letting 
\[
Z \coloneqq \{x \in M \st \tilde{f}(x) \neq f(x)\;\mbox{or}\; D\tilde{f}(x)\neq Df(x)\} \, ,
\] 
the sets 
$Z$, $f(Z)$, \dots, $f^{N-1}(Z)$ have disjoint closures.

\item\label{i.damping_gap} 
For all $x \in Z$ we have, in terms of notations \eqref{e.vec_chi} and \eqref{e.def_gap}:
\[
\mathsf{g}_u \left( \frac{1}{N-1} \sum_{n=1}^{N-1} \boldsymbol{\chi}(f^n x) \right) \ge \frac{\sigma}{2} \, ,
\quad
\mathsf{g}_u \left( \frac{1}{N-1} \sum_{n=-N-2}^{0} \boldsymbol{\chi}(f^n x) \right) \ge \frac{\sigma}{2} \, .
\]
\end{enumerate}

Note that the definition is symmetric under time-reversal, i.e.\ $\tilde{f}^{-1}$ is a $(\alpha,\beta,\kappa,\sigma,N)$-damping perturbation of $f^{-1}$.
Indeed, $\{x \in M \st \tilde{f}^{-1}(x) \neq f^{-1}(x)\;\mbox{or}\;D\tilde{f}^{-1}(x) \neq Df^{-1}(x)\} = f(Z)$.

\begin{remark}\label{e.no_uniformity_needed}
Note that condition \eqref{i.damping_gap} is weaker than the condition $\mathsf{g}_u \circ \boldsymbol{\chi} \ge \frac{\sigma}{2}$, which is always satisfied in this paper.
The reason why we insist in working with the more complicated condition \eqref{i.damping_gap} is that it may be useful for future applications of our methods.
Moreover, working with the stronger condition would not make the proof of \cref{p.damping} below any simpler.
\end{remark}

\subsection{Properties of damping perturbations}\label{ss.damping_properties}

We will show that damping perturbations are still Anosov with simple dominated splitting, provided the ``damping time'' $N$ is large enough. It is important that the condition on $N$ depends only on the other parameters $\alpha$, $\beta$, $\kappa$, $\sigma$ (together with a parameter $\gamma$ that controls the desired closeness between the new and the old spaces), but not on $f$ itself. Furthermore, given the other parameters, the damping time $N$ must be at least of the order $1/\sigma$. That should not come as a surprise, since Lyapunov exponents are inversely proportional to the time it takes to obtain a prescribed factor of expansion or contraction.

\medskip

Given numbers $\alpha>\beta>\gamma>0$, $\kappa>0$, and $\sigma>0$, let 
\begin{equation}\label{e.N0}
N_0 \coloneqq \left\lfloor \frac{2}{\sigma}\log\left(\max\{\kappa^{-1},\alpha\gamma^{-1}\}\right)\right\rfloor + 2 \, .
\end{equation}
Let $f\in \Diff^\reg_m(M)$ be a conservative Anosov diffeomorphism of unstable index $u$ admitting a simple dominated splitting $TM = E_1 \oplus \cdots \oplus E_d$,
and fix a Lyapunov metric for $f$.

\begin{proposition}\label{p.damping}
Let $N \ge N_0$ and let $\tilde f \in \Diff^\reg_m(M)$ be a $(\alpha,\beta,\kappa,\sigma,N)$-damping perturbation of $f$ with respect to the Lyapunov metric.

Then $\tilde f$ is a conservative Anosov diffeomorphism of unstable index $u$, and it admits a simple dominated splitting $\tilde E_1 \oplus \cdots \oplus \tilde E_d$.

Moreover, letting $Z \coloneqq \{x \in M \st \tilde{f}(x) \neq f(x)\;\mbox{or}\; D\tilde{f}(x)\neq Df(x)\}$, then for all $x\in M$ and $j \in \{1,\dots,d-1\}$ we have: 
\begin{enumerate}
\item\label{i.hor_perturb} 
$\tilde{E}_1(x) \oplus \dots \oplus \tilde{E}_j(x)$ and $E_{j+1}(x) \oplus \dots \oplus E_d(x)$ are transverse; and\\ if $x \not\in \bigsqcup_{n=1}^{N-1} f^n(Z)$, then $\tilde{E}_1(x) \oplus \dots \oplus \tilde{E}_j(x)\subset \cH_j(x,\gamma)$;

\item\label{i.ver_perturb} 
$\tilde{E}_{j+1}(x) \oplus \dots \oplus \tilde{E}_d(x)$ and $E_1(x) \oplus \dots \oplus E_j(x)$ are transverse; and\\ if $x \not\in \bigsqcup_{n=-N-2}^{0} f^n(Z)$, then $\tilde{E}_{j+1}(x) \oplus \dots \oplus \tilde{E}_d(x)\subset \cV_j(x,\gamma)$.

\end{enumerate}
\end{proposition}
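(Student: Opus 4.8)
The plan is to build, for each $j\in\{1,\dots,d-1\}$, a continuous $D\tilde f$-invariant ``horizontal'' cone field $\tilde{\mathcal H}_j$ and a continuous $D\tilde f^{-1}$-invariant ``vertical'' cone field $\tilde{\mathcal V}_j$ that sharpen the inclusions claimed in \eqref{i.hor_perturb}--\eqref{i.ver_perturb}, and then to read off the invariant distributions $\tilde E_1,\dots,\tilde E_d$ by the usual intersection-of-iterated-cones construction and the hyperbolicity from the norm conditions; the guiding constraint is that every constant produced must depend on $f$ only through $u$ and $\sigma$, since \cref{p.central} is later to be iterated. As a preliminary step one checks from the definition (using $\tilde f=f$, $D\tilde f=Df$ off $Z$, and \eqref{i.damping_tower}) that the $\tilde f$-orbit of any point follows the $f$-orbit except for a single step each time it passes through $Z$, that these passages are separated by at least $N$ iterates, and that consequently $\tilde f^n(Z)=f^n(Z)$ for all $n\ge0$; write $R\coloneqq M\setminus\bigsqcup_{n=1}^{N-1}f^n(Z)$, so that $Z\subset R$, $f^N(Z)\subset R$, and the ``floors'' $f^1(Z),\dots,f^{N-1}(Z)$ have pairwise disjoint closures by \eqref{i.damping_tower}. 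Define $\tilde{\mathcal H}_j$ to be the thin cone $\mathcal H_j(x,\gamma)$ for $x\in R$, and, for $x$ on floor $n\in\{1,\dots,N-1\}$, the wider cone $\mathcal H_j\big(x,\ \alpha\exp(-\textstyle\sum_{k=1}^{n-1}[\chi_j-\chi_{j+1}](f^{-k}x))\big)$ --- i.e.\ the $\alpha$-cone produced at the moment of the kick, then contracted along the orbit according to \eqref{e.hor_cone_field_inv}; disjointness of the closures of the floors makes this continuous on $M$. Define $\tilde{\mathcal V}_j$ symmetrically, with base $f(Z)$ and dynamics $f^{-1}$, using \eqref{e.ver_cone_field_inv}.

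Checking $D\tilde f(x)\tilde{\mathcal H}_j(x)\subseteq\tilde{\mathcal H}_j(\tilde f(x))$ reduces to a short list of cases. If $x\in Z$, then $\mathcal H_j(x,\gamma)\subseteq\mathcal H_j(x,\beta)$ since $\gamma<\beta$, and \eqref{i.damping_cones} lands us in the $\alpha$-cone placed at $\tilde f(x)\in f^1(Z)$. If $x\in R\setminus Z$, then $\tilde f(x)=f(x)$ is again in $R$, and \eqref{e.hor_cone_field_inv} together with $\chi_j>\chi_{j+1}$ keeps the opening $\le\gamma$. If $x$ lies on floor $n\le N-2$, then $\tilde f=f$ there and \eqref{e.hor_cone_field_inv} contracts the opening by exactly the factor built into the definition on floor $n+1$. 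The decisive case is $x$ on floor $N-1$: one further application of \eqref{e.hor_cone_field_inv} gives an image opening at most $\alpha\exp\big(-\sum_{n=1}^{N-1}[\chi_j-\chi_{j+1}](f^n y)\big)$ with $y\coloneqq f^{-(N-1)}(x)\in Z$, and the first bound in \eqref{i.damping_gap} makes this $\le\alpha e^{-(N-1)\sigma/2}$, which is $\le\gamma$ \emph{precisely because} $N\ge N_0$ (recall \eqref{e.N0}); since $\tilde f(x)=f(x)\in f^N(Z)\subset R$ we are back in the thin cone. The identical bookkeeping run for $f^{-1}$, using the second bound in \eqref{i.damping_gap} (or simply the pointwise gap $\mathsf{g}_u\circ\boldsymbol{\chi}\ge\sigma/2$ available throughout the paper), gives $D\tilde f^{-1}$-invariance of $\tilde{\mathcal V}_j$.

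From here everything is routine. The $\tilde{\mathcal H}_j$ are nested in $j$ and their openings contract by a definite amount over one full tower cycle, so $\tilde E^{(j)}(x)\coloneqq\bigcap_{n\ge0}D\tilde f^n(\tilde f^{-n}x)\,\tilde{\mathcal H}_j(\tilde f^{-n}x)$ is a continuous $D\tilde f$-invariant $j$-dimensional distribution; these increase with $j$, the $\tilde{\mathcal V}_j$ yield a complementary decreasing flag (using $\gamma<1$ so that $\mathcal H_j(x,\gamma)\cap\mathcal V_j(x,\gamma)=\{0\}$ on $R$, whence transversality propagates to all of $M$ by invariance), and intersecting the two flags defines line bundles $\tilde E_j$ with $\tilde E_1\oplus\cdots\oplus\tilde E_j=\tilde E^{(j)}$; the uniform cone contraction promotes $\tilde E_1\oplus\cdots\oplus\tilde E_d$ to a simple dominated splitting. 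For hyperbolicity, take $v\in\tilde E_1\oplus\cdots\oplus\tilde E_u\subseteq\tilde{\mathcal H}_u$ and track its forward orbit: over the kicked step \eqref{i.damping_norm} gives $\tribar{P^\mathrm{u}D\tilde f(x)v}\ge\kappa\tribar{P^\mathrm{u}v}$, over the $N-1$ free steps restoring the orbit to $R$ the $\chi_u$-entry of \eqref{i.damping_gap} contributes a factor $\ge e^{(N-1)\sigma/2}$, and any extra free steps only help since $\chi_u>0$; as $N\ge N_0$ forces $\kappa e^{(N-1)\sigma/2}>1$, the $P^\mathrm{u}$-component --- hence, the cone being uniformly thin on $R$ and of opening $\le\alpha$ on the floors, the whole vector --- grows at an $f$-independent exponential rate, and symmetrically $P^\mathrm{s}$ contracts. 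Thus $\tilde f$ is Anosov of unstable index $u$, and \eqref{i.hor_perturb}--\eqref{i.ver_perturb} are immediate: $\tilde E^{(j)}\subseteq\tilde{\mathcal H}_j$ lies in a finite-opening horizontal cone everywhere (hence is transverse to $E_{j+1}\oplus\cdots\oplus E_d$) and coincides with $\mathcal H_j(\cdot,\gamma)$ exactly on $R=M\setminus\bigsqcup_{n=1}^{N-1}f^n(Z)$; likewise for the vertical side.

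The main obstacle I anticipate is exactly this quantitative matching. Each tower cycle has a single ``kick'' step, controlled only by the crude constants $\alpha$ and $\kappa$, pitted against $N-1$ ``damping/expanding'' steps whose cumulative effect is known only on average through \eqref{i.damping_gap}; arranging that the two cancel --- $\alpha e^{-(N-1)\sigma/2}\le\gamma$ on the cone-opening side and $\kappa e^{(N-1)\sigma/2}>1$ on the norm side --- is the whole purpose of the threshold $N_0$ in \eqref{e.N0}, and the argument collapses unless $N_0$ and every other constant stay independent of $f$, which forbids the usual crutches (Hölder constants of the splitting, ellipticity of the Lyapunov metric, bounds on $\|D\tilde f\|$, and the like); this is also where the relation $N\ge\text{const}/\sigma$ becomes unavoidable.
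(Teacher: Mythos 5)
Your plan---build continuous forward-/backward-invariant cone fields that coincide with the $\gamma$-cones off the tower, deduce a dominated splitting, and read off hyperbolicity from condition~\eqref{i.damping_norm} plus the $\chi_u$-entry of~\eqref{i.damping_gap}---is exactly the paper's strategy, and your invariance and expansion bookkeeping on the floors is sound (including the correct role of $N\ge N_0$). However, your explicit cone field is \emph{not} continuous, and the sentence ``disjointness of the closures of the floors makes this continuous on $M$'' is false: on floor $1=f(Z)$ you prescribe opening $\alpha$ (the sum is empty), yet $\partial f(Z)\subset R$ where you prescribe opening $\gamma<\alpha$, so the opening function jumps on $\partial f(Z)$. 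Disjointness of the closures $\overline{Z},\overline{f(Z)},\dots,\overline{f^{N-1}(Z)}$ separates the floors from one another but not from $R$, which abuts every floor boundary. This matters because the route you invoke from an invariant cone field to a dominated splitting (the standard one, as in Sambarino's survey cited in the paper) requires the cone field to be \emph{continuous}.

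This is precisely the difficulty that the paper's Lemma~4.3 (\cref{l.new_cone}) is designed to solve. There one introduces the open set $A$ of points where the $\gamma$-cone is already invariant under $D\tilde f$ (so $A\cup Z=M$), takes a partition of unity $\rho_1+\rho_2=1$ subordinate to $\{A,Z\}$, and sets the opening on $f(Z)$ to be $\gamma\rho_1(\tilde f^{-1}\cdot)+\alpha\rho_2(\tilde f^{-1}\cdot)$, so that it tends to $\gamma$ as one approaches $\partial f(Z)$ from inside; on higher floors one propagates with a recursion $\omega_j(x)=\max\{\gamma,\,\omega_j(f^{-1}x)e^{-\chi_j(f^{-1}x)+\chi_{j+1}(f^{-1}x)+\epsilon}\}$. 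The $\max$ with $\gamma$ and the extra $+\epsilon$ slack (absent from your formula, which contracts the opening at \emph{exactly} the rate of \eqref{e.hor_cone_field_inv}) are what make the resulting $\omega_j$ continuous and the cone inclusion \emph{strict}, i.e.\ $D\tilde f(x)\,\overline{\cH_j(x,\omega_j(x))}\subset\cH_j(\tilde f(x),\omega_j(\tilde f(x)))$, which is what the cone-field criterion needs. Your argument could perhaps be salvaged by giving a separate sandwich argument that the limiting distributions are continuous despite the discontinuous cone field, but as written, asserting continuity of $\tilde{\cH}_j$ is a genuine gap, and it is exactly the point the paper spends its Lemma~4.3 on.
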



\begin{proof}[Proof of \cref{p.damping}]
Observe that  $N_0 \ge 2$ and that
\begin{equation}\label{e.def_N}
e^{(N_0-1) \sigma/2} > \max\left\{\kappa^{-1}  , \alpha \gamma^{-1} \right\} \, .
\end{equation}

The first step is to construct continuous fields of cones that are invariant with respect to the perturbation $\tilde f$:

\begin{lemma}\label{l.new_cone}
For each $j \in \{1,\dots,d-1\}$,
there exists a continuous function $\omega_j$ on $M$ such that, for every $x \in M$,
\begin{enumerate}[itemsep=1ex]
\item \label{i.omega_bounds}
$\gamma \le \omega_j(x) \le \alpha$;

\item \label{i.omega_flat}
$\omega_j(x) = \gamma$ if $x \not\in \bigsqcup_{n=1}^{N-1} f^n(\bar Z)$;

\item \label{i.forward_invariance}
$D\tilde f(x) \, \overline{\cH_j(x,\omega_j(x))} \subset \cH_j \big( \tilde f(x), \omega_j(\tilde f(x)) \big)$.

\end{enumerate}
\end{lemma}

\begin{proof}
Fix $j$.
By \eqref{e.def_N}, we can choose $\epsilon>0$ such that:
\[
\epsilon < \inf_M (\chi_j - \chi_{j+1})
\quad \text{and} \quad
\alpha e^{-(N-1)(\sigma/2-\epsilon)}<\gamma \, .
\]
Let
\[
A \coloneqq \left\{ x \in M \st
D\tilde{f}(x) \, \overline{\cH_j(x,\gamma)} \subset \cH_j(\tilde{f}(x),\gamma) \right\} \, .
\]
Then $A$ is an open set and $A \cup Z = M$.
Let $\rho_1 + \rho_2 = 1$ be a partition of unity subordinated to this open covering.
Define the function $\omega_j$ as follows:
\begin{itemize}[itemsep=1ex]
\item if $x \not\in \bigsqcup_{n=1}^{N-1} f^n(\bar Z)$ then we let $\omega_j(x) \coloneqq \gamma$;
\item if $x \in f(Z) = \tilde{f}(Z)$ then we let
\[
\omega_j(x) \coloneqq \gamma \rho_1(\tilde{f}^{-1}(x)) + \alpha \rho_2(\tilde{f}^{-1}(x)) \, .
\]
\item if $x \in f^n(Z)$ where $2 \le n \le N-1$ then, assuming $\omega_j$ was already defined on $f^{n-1}(Z)$, we let
\begin{equation*}  
\omega_j(x) \coloneqq \max \left\{ \gamma, \omega_j(f^{-1}(x)) e^{-\chi_j(f^{-1}(x)) +\chi_{j+1}(f^{-1}(x)) + \epsilon} \right\} \, .
\end{equation*}
\end{itemize}
The function $\omega_j$ is continuous and has properties \eqref{i.omega_bounds} and \eqref{i.omega_flat}.
Next we check property \eqref{i.forward_invariance} case by case.
First consider $x \in Z$.
\begin{itemize}[itemsep=1ex]

\item
If $x \in Z \cap A$ then $\omega_j(x) = \gamma$ and $\omega_j(\tilde{f}(x)) \ge \gamma$, so \eqref{i.forward_invariance} follows directly from the definition of $A$.

\item 
If $x \in Z \setminus A$ then $\omega_j(x) = \gamma < \beta$ and $\omega_j(\tilde{f}(x)) = \alpha$, so \eqref{i.forward_invariance} follows from condition \eqref{i.damping_cones} of the definition of damping perturbations.
\end{itemize}

Next we consider the cases where $x \not\in Z$
and so $\tilde{f}(x) = f(x)$ and $D\tilde{f}(x) = Df(x)$.

\begin{itemize}[itemsep=1ex]

\item
If $x \not\in \bigsqcup_{n=0}^{N-1} f^n(\bar Z)$
then  $\omega_j(x) = \omega_j(f(x)) = \gamma$, so \eqref{i.forward_invariance} follows from the invariance property \eqref{e.hor_cone_field_inv}.

\item 
If $x \in f^n(Z)$ for some $n \in \{1,\dots,N-2\}$ then 
\[
\omega_j(f(x)) \ge \omega_j(x) e^{-\chi_j(x)+\chi_{j+1}(x)+\epsilon} \, ,
\]
and so 
\eqref{i.forward_invariance} again follows from \eqref{e.hor_cone_field_inv}.

\item 
Finally, if $x \in f^{N-1}(Z)$ then, letting $y \coloneqq f^{-N+2}(x)$, we have:
\[
\omega_j(x) = 
\max \left\{ \gamma, \omega_j(y) \exp \sum_{i=0}^{N-2} \big[-\chi_j(f^i(y))+\chi_{j+1}(f^i(y))+\epsilon \big] \right\}
 \, .
\]
Using the fact that $\omega_j(y) \le \alpha$, condition \eqref{i.damping_gap} of the definition of damping perturbations, and the choice of $\epsilon$, we obtain that ${\omega_j(x) = \gamma}$.
Of course, $\omega_j(f(x))$ also equals $\gamma$, so 
\eqref{i.forward_invariance} again follows from \eqref{e.hor_cone_field_inv}.

\end{itemize}
This completes the proof of the \lcnamecref{l.new_cone}.
\end{proof}

Property \eqref{i.forward_invariance} from \cref{l.new_cone}
means that for each $j$, the continuous cone field $\cC_j(x) \coloneqq \cH_j(x,\omega_j(x))$ 
is strictly forward invariant.
This implies that the diffeomorphism $\tilde f$ has a simple dominated splitting $\tilde E_1 \oplus \cdots \oplus \tilde E_d$ such that for all $x$ and $j$, we have 
$\tilde{E}_1(x) \oplus \dots \oplus \tilde{E}_j(x) \subset \cC_j(x)$; see e.g.\ \cite[Proposition 2.2]{Samba}. 
In particular, conclusion \eqref{i.hor_perturb} of \cref{p.damping} is satisfied.
Since the definition of damping perturbations is symmetric under time-reversal, it follows that conclusion \eqref{i.ver_perturb} is also satisfied.

Next, consider the subbundles:
\begin{alignat*}{2}
E^\mathrm{u} &\coloneqq E_1 \oplus \dots \oplus E_u \, , &\quad 
E^\mathrm{s} &\coloneqq E_{u+1} \oplus \dots \oplus E_d \, , \\
\tilde E^\mathrm{u} &\coloneqq \tilde E_1 \oplus \dots \oplus \tilde E_u \, , &\quad 
\tilde E^\mathrm{s} &\coloneqq \tilde E_{u+1} \oplus \dots \oplus \tilde E_d \, .
\end{alignat*}
Let us check that the $D\tilde{f}$-invariant splitting $\tilde E^\mathrm{u} \oplus \tilde E^\mathrm{s}$ is uniformly hyperbolic.

Recall that $P^\mathrm{u} \colon TM \to TM$ denotes the projection onto $E^\mathrm{u}$ with kernel~$E^\mathrm{s}$.
Then, for every $x \in M$ and $v \in T_x M$, 
\begin{equation} \label{e.expansion_easy}
\tribar{P^\mathrm{u} Df(x) \, v} \ge e^{\chi_u(x)} \tribar{P^\mathrm{u}  v} \ge a_1 \tribar{P^\mathrm{u}  v} 
\, ,
\end{equation}
for some constant $a_1>1$.
Now consider $x \in Z$ and $v \in \cC_u(x)$; then: 
\begin{align*}
\tribar{P^\mathrm{u}  \, D\tilde{f}^N(x) v} 
&\ge \tribar{P^\mathrm{u} Df^{N-1}(\tilde{f}(x)) \, D\tilde{f}(x) \, v} \\
&\ge \exp\left[ \sum_{i=0}^{N-2} \chi_u(f^i(\tilde{f}(x))) \right] \tribar{P^\mathrm{u} D\tilde{f}(x) \, v} \\
&\ge e^{(N-1)\sigma/2} \tribar{P^\mathrm{u} D\tilde{f}(x) \, v} 
&\quad&\text{(by \eqref{i.damping_gap})} \\
&\ge \kappa e^{(N-1)\sigma/2}  \tribar{P^\mathrm{u} v} 
&\quad&\text{(by \eqref{i.damping_norm})} .
\end{align*}
Let $a_2 \eqcolon \kappa e^{(N-1)\sigma/2}$, which by inequality \eqref{e.def_N} is bigger than $1$.

Let $a_3 \coloneqq \min\{a_1,a_2^{1/N}\} > 1$.
We claim that 
there exists a constant $c > 0$ such that
for all $x\in M$, $v \in \cC_u(x)$, and $n \ge 0$,
\begin{equation}\label{e.projection_expansion}
\tribar{P^\mathrm{u}  \, D\tilde{f}^n(x) \, v} \ge c a_3^n \tribar{P^\mathrm{u}  \, v} \, .
\end{equation}
Indeed, the inequality holds with $c=1$ if the segment of orbit $\{x, \tilde{f}(x), \dots, \tilde{f}^{n-1}(x)\}$: 
\begin{itemize}
	\item either does not intersect $Z$;
	\item or, for each time it enters $Z$, it goes through the tower $Z \sqcup f(Z) \sqcup \cdots \sqcup f^{N-1}(Z)$ completely.
\end{itemize}
In general, the segment may end inside the tower; in this case its contribution is uniformly bounded, so we conclude that \eqref{e.projection_expansion} holds for an appropriate value of $c$.

Next, note that for all $x\in M$,
\begin{align*}
\cC_u(x) 
&= \big\{ v \in T_x M \st \tribar{P_\mathrm{s}(v)}^2 < \omega_u(x)^2  \tribar{P_\mathrm{u}(v)}^2  \big\} \cup \{0\} \\ 
&= \big\{ v \in T_x M \st \tribar{P_\mathrm{u}(v)}^2 > (1+\omega_u(x)^2)^{-1} \tribar{v}^2 \big\} \cup \{0\} \, .
\end{align*}
By \cref{l.new_cone}.\eqref{i.omega_bounds}, $\omega_u(x) \le \alpha$.
Using \eqref{e.projection_expansion} we conclude that for all $v \in \cC_u(x)$ and $n \ge 0$, 
\[
\tribar{D\tilde{f}^n(x) \, v} \ge 
\tribar{P^\mathrm{u}  \, D\tilde{f}^n(x) \, v} \ge 
c a_3^n \tribar{P^\mathrm{u}  \, v} \ge
c (1+\alpha^2)^{-1/2} a_3^n \tribar{v} \, .
\]
In particular, the bundle $\tilde E^\mathrm{u}$ is uniformly expanding.
By symmetry, the bundle $\tilde E^\mathrm{s}$ is uniformly contracting.
This shows that $\tilde f$ is an Anosov diffeomorphism of index $u$, completing the proof of \cref{p.damping}.
\end{proof}

\section{The model deformation}\label{s.model}

In this \lcnamecref{s.model} we define a special family of diffeomorphisms called ``model deformation'' that will be the basis for the construction in \cref{s.central_proof}. We also establish a few properties of those maps.

\medskip

Let $\| \mathord{\cdot} \|$ denote the Euclidean norm in $\R^d$,
and let $\B \coloneqq \{z \st \|z\| \le 1\}$ be the unit ball.
Denote by $m$ the Lebesgue measure on $\R^d$. 
All the constructions in this \lcnamecref{s.model} are in $\R^d$, so there is no risk of confusion with the volume measure on the manifold $M$.

Let $\Diff^\infty_m(\B,\partial\B)$ denote the set of all maps $h \colon \B \to \B$
that can be extended to  volume-preserving $C^\infty$-diffeomorphisms of $\R^d$
that coincide with the identity outside $\B$.

For each $j \in \{1,\dots,d-1\}$ and $\theta \in \R$, let $R^{(j)}_\theta$ be the orthogonal matrix that rotates the plane $\{0\}^{j-1} \times \R^2 \times \{0\}^{d-j-1}$ by angle $\theta$ and is the identity on its orthogonal complement, that is,
\begin{equation}\label{e.def_R_theta}
R^{(j)}_\theta \coloneqq
\begin{pmatrix}
\Id_{j-1}	&				&				&				\\
			& \cos \theta	& -\sin \theta	&				\\
			& \sin \theta	& \phantom{-}\cos \theta	&				\\
			&				&				& \Id_{d-j-1}
\end{pmatrix}
\, .
\end{equation}

Fix a rotationally symmetric (i.e.\ only depending on the norm) $C^\infty$-function $\rho \colon \R^d \to \R$ which is not identically zero and is supported on $\B$.
For each $j \in \{1,\dots,d-1\}$ and $s \in \R$, define $h^{(j)}_s \in \Diff^\infty_m(\B,\partial \B)$ by
\begin{equation}\label{e.def_h_elementary}
h^{(j)}_s(z) \coloneqq R^{(j)}_{s \rho(z)} (z) \, ;
\end{equation}
such a map is indeed conservative since it preserves the family of spheres with center at the origin and it acts on each of these as an orthonormal map.
The diffeomorphisms $h^{(j)}_s$ will be called the \emph{elementary model deformations}.
See the figure on \cite[p.~334]{ACW}.

For each $t = (t_1,\dots,t_{d-1}) \in \R^{d-1}$,
define $h_t \in \Diff^\infty_m(\B,\partial \B)$ by
\begin{equation}\label{e.def_ht}
h_t \coloneqq h^{(d-1)}_{t_{d-1}} \circ \cdots \circ h^{(1)}_{t_1} \, .
\end{equation}
This $d-1$-parameter family of diffeomorphisms will be called the \emph{model deformation}.

If $t \in \R^{d-1}$ is sufficiently close to $(0,\dots,0)$ then 
the diffeomorphism $h_t$ is sufficiently $C^1$-close to the identity so that 
the following \emph{transversality condition} holds:
for all $z \in \B$ and $j \in \{1,\dots,d-1\}$,
\begin{align}
\label{e.transv_hor}
Dh_t(z) (\R^j \times \{0\}^{d-j}) &\text{ is transverse to } \{0\}^j \times \R^{d-j} \, . 
\end{align}
Adjusting $\rho$ if necessary, we assume from now on that every $t$ in the unit cube $[0,1]^{d-1}$ satisfies the transversality condition.

\medskip

If $A$ is any $d \times d$ matrix and $I$, $J \subseteq \{1,\dots,d\}$ are nonempty subsets of the same cardinality, let us denote by $A[I,J]$ the submatrix formed by the entries with row in $I$ and column in $J$.
Recall that the determinants of those matrices are called the \emph{minors} of $A$;
the $k$-th \emph{principal minor} corresponds to $I = J = \{1,\dots,k\}$,
and is denoted by $\det\nolimits_k A$.

Note that the derivatives of the maps \eqref{e.def_h_elementary} have the following form (where asterisks are placeholders for unspecified entries):
\[
Dh_s^{(j)}(z) =
\begin{pmatrix}
1	&		&	&	&	&	&		&	\\
	&\ddots	&	&	&	&	&		& 	\\
	&		&1	&	&	&	&		&	\\
*	&\cdots	&*	&\circled{$*$}	&*	&*	&\cdots	&*	\\
*	&\cdots	&*	&*	&*	&*	&\cdots	&*	\\
	&		&	&	&	&1	&		&	\\
	&		&	&	&	&	&\ddots	&	\\
	&		&	&	&	&	&		&1 
\end{pmatrix}
\begin{matrix} 
    \coolrightbrace{1 \\ \ddots \\ 1 \\ \circled{$*$}}{j\phantom{d-j}} \\
    \coolrightbrace{* \\ 1 \\ \ddots \\ 1}{d-j\phantom{j}} 
\end{matrix}
\]
Moreover, since these matrices have unit determinant, we conclude that their principal minors are:
\begin{equation}\label{e.minor_factor}
\det\nolimits_k Dh_s^{(j)}(z) = 
\begin{cases}
	1 					&\text{if } k \neq j, \\
	\Delta^{(j)}_s(z)	&\text{if } k = j,
\end{cases}
\end{equation}
where $\Delta^{(j)}_s(z)$ is defined as the $(j,j)$-entry of the matrix $Dh^{(j)}_s(z)$, i.e.\ the circled entry above.

The matrices $Dh^{(j)}_s(z)$ have no common block triangular form, so the derivative of the composition \eqref{e.def_ht} is intricate. 
Nevertheless, there is a simple expression for its principal minors:

\begin{lemma}\label{l.minors}
For all $z \in \B$, $t = (t_1,\dots,t_{d-1}) \in \R^{d-1}$, and $j \in \{1,\dots,d-1\}$ we have:
\[
\det\nolimits_j Dh_t(z) = \Delta^{(j)}_{t_j} \circ h^{(j-1)}_{t_{j-1}} \circ \cdots \circ h^{(1)}_{t_1} (z) \, .
\]
\end{lemma}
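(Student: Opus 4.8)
The plan is to compute $\det\nolimits_j Dh_t(z)$ directly using the chain rule and the Cauchy–Binet formula, exploiting the very special structure of the elementary factors recorded in \eqref{e.minor_factor}. Write $h_t = h^{(d-1)}_{t_{d-1}} \circ \cdots \circ h^{(1)}_{t_1}$ and set, for $0 \le k \le d-1$, the partial composition $g_k \coloneqq h^{(k)}_{t_k} \circ \cdots \circ h^{(1)}_{t_1}$ (with $g_0 = \id$), so that $h_t = g_{d-1}$ and, by the chain rule,
\[
Dh_t(z) = Dh^{(d-1)}_{t_{d-1}}(g_{d-2}(z)) \cdots Dh^{(1)}_{t_1}(g_0(z)) \, .
\]
Fix $j$. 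The claim is that when we extract the $j$-th principal minor of this product, all the factors with index $\neq j$ contribute a factor of $1$, and the single factor $Dh^{(j)}_{t_j}(g_{j-1}(z))$ contributes $\Delta^{(j)}_{t_j}(g_{j-1}(z))$, which is exactly the asserted formula.

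The first key step is to observe that each elementary derivative $Dh^{(i)}_{s}(w)$ is a matrix that differs from the identity only in rows $i$ and $i+1$; in particular it is lower-unitriangular with respect to the flag $\R^1 \subset \R^2 \subset \cdots \subset \R^d$ in the sense that it preserves each coordinate subspace $\R^k \times \{0\}^{d-k}$ for $k \neq i$, and maps $\R^i \times \{0\}^{d-i}$ into $\R^{i+1}\times\{0\}^{d-i-1}$. Consequently, for any $i < j$, the matrix $Dh^{(i)}_{s}(w)$ maps $\R^j \times \{0\}^{d-j}$ into itself and acts on it with determinant $1$ (indeed $\det\nolimits_j Dh^{(i)}_s(w) = 1$ by \eqref{e.minor_factor}, and it is block-triangular for the splitting $\R^j \oplus (\R^{d-j})$ with upper-left block of determinant $1$). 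Likewise, for $i > j$ the matrix $Dh^{(i)}_s(w)$ fixes $\R^j\times\{0\}^{d-j}$ pointwise. The only factor that genuinely changes the action on $\R^j \times \{0\}^{d-j}$ is the $i=j$ one.

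The second key step makes this precise via Cauchy–Binet applied to the principal $j\times j$ minor of a product. Order the factors as $A_{d-1} A_{d-2} \cdots A_1$ with $A_i \coloneqq Dh^{(i)}_{t_i}(g_{i-1}(z))$. For each $i \neq j$ the row space and column space being tracked stay inside the index set $\{1,\dots,j\}$: concretely, for $i < j$ the matrix $A_i$ has block form $\begin{pmatrix} B_i & 0 \\ * & \Id\end{pmatrix}$ with respect to $\{1,\dots,j\}$ versus $\{j+1,\dots,d\}$ and $\det B_i = 1$; for $i > j$ it has the form $\begin{pmatrix} \Id & 0 \\ * & C_i\end{pmatrix}$. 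Multiplying out and extracting the top-left $j\times j$ block, the $i>j$ factors drop out entirely (they act as the identity on the first $j$ coordinates from the left) and the $i<j$ factors contribute $\det B_i = 1$, leaving exactly the top-left $j\times j$ block of $A_j = Dh^{(j)}_{t_j}(g_{j-1}(z))$, whose determinant is $\det\nolimits_j Dh^{(j)}_{t_j}(g_{j-1}(z)) = \Delta^{(j)}_{t_j}(g_{j-1}(z))$ by \eqref{e.minor_factor}. Since $g_{j-1} = h^{(j-1)}_{t_{j-1}} \circ \cdots \circ h^{(1)}_{t_1}$, this is the claimed identity.

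The main obstacle — really the only thing needing care — is bookkeeping the block structure when the factors to the left of $A_j$ (those with $i > j$) and to the right (those with $i < j$) interact in the product. The clean way to handle it is to process the product from the outside in: first note $A_{d-1}\cdots A_{j+1}$ is lower block-triangular with identity upper-left $j\times j$ block (a product of such), hence left-multiplying by it does not alter the first $j$ rows; then $A_{j-1}\cdots A_1$ is lower block-triangular with upper-left block of determinant $1$, hence right-multiplying $A_j$ by it multiplies the $j$-th principal minor by $1$. Assembling these two observations gives the result with no actual computation of the messy entries of $h_t$.
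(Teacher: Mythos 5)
Your strategy (chain rule plus block‑matrix bookkeeping in place of the paper's Cauchy--Binet computation) is sound, and the conclusion you reach is right, but the block form you assign to the factors $A_i=Dh^{(i)}_{t_i}(g_{i-1}(z))$ with $i<j$ is wrong in a way that, taken literally, breaks the key step. You write that for $i<j$ the matrix $A_i$ has the form
\[
\begin{pmatrix} B_i & 0 \\ * & \Id\end{pmatrix},
\]
i.e.\ that the top-right block $A_i[\{1,\dots,j\},\{j+1,\dots,d\}]$ vanishes. This is false: the rows $i$ and $i+1$ of $Dh^{(i)}_{s}$ (which lie in the top block, since $i+1\le j$) contain the derivative of $\rho$, which depends on \emph{all} $d$ coordinates, so those rows have nonzero entries in columns $j+1,\dots,d$. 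What actually vanishes is the bottom-left block $A_i[\{j+1,\dots,d\},\{1,\dots,j\}]$, because rows $j+1,\dots,d$ are strictly above $i+1$ and hence are the identity rows $e_{j+1}^\top,\dots,e_d^\top$. So the correct form is
\[
A_i=\begin{pmatrix} B_i & * \\ 0 & \Id\end{pmatrix}\qquad(i<j),
\]
i.e.\ \emph{upper}, not lower, block-triangular. (The same confusion shows up earlier when you assert that $Dh^{(i)}_s$ ``preserves each coordinate subspace $\R^k\times\{0\}^{d-k}$ for $k\neq i$''; that only holds for $k\ge i+1$, not for $k<i$.) With your claimed lower-triangular form, the top-left block of $A_j(A_{j-1}\cdots A_1)$ would be $XB+Y\cdot(*)$ rather than $XB$, and the asserted factorization of $\det_j$ does not follow.

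The fix is just to flip the block form, and then everything you intended to say does go through: $A_{d-1}\cdots A_{j+1}=\begin{pmatrix}\Id & 0\\ *&C\end{pmatrix}$ is lower block-triangular with identity top-left block (correct as you wrote), so it leaves the first $j$ rows of the remaining product untouched; $A_{j-1}\cdots A_1=\begin{pmatrix}B& *\\0&\Id\end{pmatrix}$ is \emph{upper} block-triangular, and right-multiplying $A_j$ by it yields top-left block $A_j[J,J]\cdot B$, whence $\det_j Dh_t(z)=\det\nolimits_j A_j\cdot\det B=\Delta^{(j)}_{t_j}(g_{j-1}(z))\cdot 1$. Note this upper-triangular form is precisely what your own invariance remark predicts ($A_i$ maps $\R^j\times\{0\}^{d-j}$ into itself iff the bottom-left block vanishes). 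Compared to the paper, your corrected argument replaces the Cauchy--Binet expansion of $\det_j$ over intermediate index sets with an explicit block-matrix calculation; the paper's route is slightly more robust in that it only needs to know which row/column minors of each factor vanish, without establishing full block triangularity, but the two arguments rest on the same structural observation about the elementary factors and lead to the same single surviving term.
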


\begin{proof}
Fix $z$ and $t$, and let $P \coloneqq Dh_t(z)$.
Let $z^{(0)} \coloneqq z$.
For each $k \in \{1,\dots, d-1\}$, let
$z^{(k)} \coloneqq h^{(k)}_{t_k} \circ \cdots \circ h^{(1)}_{t_1} (z)$
and $A_k \coloneqq Dh^{(k)}_{t_k} (z^{(k-1)})$.
By the chain rule, $P = A_{d-1} \cdots A_2 A_1$.

Fix $j \in \{1,\dots,d-1\}$, and let $J \coloneqq \{1,\dots,j\}$.
We claim that if $I \subset \{1,\dots,d\}$ has cardinality $j$ and $I \neq J$ then:
\begin{align}
\label{e.IJ}
\det A_k [I,J] = 0 \text{ for every } k &< j, \text{ and }\\
\label{e.JI}
\det A_k [J,I] = 0 \text{ for every } k &> j. 
\end{align}
Indeed, for all $i>j$, 
the row matrix $A_k[\{i\},J]$ vanishes if $k<j$, 
and the column matrix $A_k[J,\{i\}]$ vanishes if $k>j$. 

By the Cauchy--Binet formula for the minors of a product, 
\[
\det P[J,J] = 
\sum \det A_{d-1}[J,I_{d-2}] \cdots \det A_j [I_j,I_{j-1}] \cdots \det A_1 [I_1,J] \, , 
\]
where the sum is over all $d-2$-tuples $(I_1,\dots,I_{d-2})$ of subsets of $\{1,\dots,d\}$ of cardinality $j$.
Consider a nonzero term of this sum.
Using \eqref{e.IJ} recursively we obtain $J = I_1 = I_2 = \cdots = I_{j-1}$.
On the other hand, using \eqref{e.JI} recursively we obtain $J = I_{d-2} = I_{d-3} = \cdots = I_j$.
Therefore the sum contains a single nonzero term, which by \eqref{e.minor_factor} equals $\Delta^{(j)}_{t_j}(z^{(j-1)})$.
\end{proof}

The transversality condition \eqref{e.transv_hor} is equivalent to the non-vanishing of the minors $\det\nolimits_j Dh_t(z)$. 
When $t=0$ these minors equal $1$.
So the minors are always strictly positive.
In particular, by \cref{l.minors}, the functions $\Delta^{(j)}_s$ are strictly positive for $s \in [0,1]$.
We will need information about the logarithms of these functions.
By definition,
\[
\Delta^{(j)}_s(z_1,\dots,z_d) 
=
\frac{\partial}{\partial z_j} \Big( z_j \cos (s\rho(z)) - z_{j+1} \sin (s\rho(z)) \Big) \, .
\]
Since $\rho$ is rotationally symmetric, 
\[
\Delta^{(j)}_s (z_1,\dots,z_d) = \Delta^{(1)}_s (z_j, z_{j+1}, \dots, z_d, z_1, \dots, z_{j-1}).
\]
In particular, for each $s \in [0,1]$, the integral
\begin{equation}\label{e.def_Q}
{\mathcal Q}(s) \coloneqq - \frac{1}{m(\B)} \int_{\B} \log \Delta^{(j)}_s  \dd m \, ,
\end{equation}
is independent of $j$.

\begin{lemma}\label{l.Jensen}
For every $s \in [0,1]$ we have ${\mathcal Q}(s) \ge 0$, with equality if and only if $s=0$.
\end{lemma}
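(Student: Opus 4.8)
The plan is to use the classical integral form of Jensen's inequality together with the fact that the map $h^{(j)}_s$, being volume-preserving, has derivative of determinant $1$ everywhere. First I would observe that, by the change-of-variables formula, $\int_\B \Delta^{(j)}_s \dd m = m(\B)$ for every $s$. Indeed, $\Delta^{(j)}_s(z)$ is not the full Jacobian of $h^{(j)}_s$ but only its $j$-th principal minor; however, because of the block structure of $Dh^{(j)}_s(z)$ exhibited just before \eqref{e.minor_factor}, the full Jacobian is precisely the product of the circled entry (which is $\Delta^{(j)}_s$) with the lower-right $(d-j)\times(d-j)$ identity block times the upper-left identity block — wait, more carefully: the matrix $Dh^{(j)}_s(z)$ is the identity except in rows $j$ and $j+1$, so its determinant is the determinant of the $2\times 2$ block in rows/columns $\{j,j+1\}$, which equals $1$ since $h^{(j)}_s$ is conservative. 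The quantity $\Delta^{(j)}_s$ is only the $(j,j)$-entry, so one cannot immediately say $\int \Delta^{(j)}_s\dd m = m(\B)$. Instead I would exploit the rotational symmetry: since $\Delta^{(j)}_s(z_1,\dots,z_d) = \Delta^{(1)}_s(z_j,z_{j+1},\dots,z_d,z_1,\dots,z_{j-1})$ and $\B$ is rotationally symmetric, it suffices to treat $j=1$ and work in the $(z_1,z_2)$-plane, integrating over the remaining coordinates first.

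The key computation is therefore: with $r^2 = z_1^2+\cdots+z_d^2$ and writing $\phi(z) = s\rho(z)$ (a function of $r$ only), we have
\[
\Delta^{(1)}_s(z) = \frac{\partial}{\partial z_1}\big(z_1\cos\phi - z_2 \sin\phi\big) = \cos\phi - (z_1\sin\phi + z_2\cos\phi)\,\phi'(r)\,\frac{z_1}{r}\,.
\]
Fixing $z_3,\dots,z_d$ and hence fixing the range of $(z_1,z_2)$ to a disc of some radius $\varrho$, I would pass to polar coordinates $z_1 = t\cos\psi$, $z_2 = t\sin\psi$ in that disc. The term $(z_1\sin\phi+z_2\cos\phi)\phi'(r)z_1/r$, when integrated over $\psi\in[0,2\pi)$ with the area element $t\dd t\dd\psi$, contributes zero because the angular integrand is an odd trigonometric polynomial in $\psi$ (note $r$, hence $\phi$ and $\phi'$, depends only on $t$ here). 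Hence $\int_{\text{disc}} \Delta^{(1)}_s(z)\dd z_1\dd z_2 = \int_{\text{disc}}\cos\phi(r)\dd z_1 \dd z_2 \le \pi\varrho^2$, and Fubini gives $\int_\B \Delta^{(j)}_s \dd m \le m(\B)$, actually with a strict inequality in general. This is slightly weaker than the identity I first hoped for, but it is exactly what Jensen needs.

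Given $\int_\B \Delta^{(j)}_s \dd m \le m(\B)$, the concavity of $\log$ yields
\[
\frac{1}{m(\B)}\int_\B \log\Delta^{(j)}_s \dd m \le \log\!\left(\frac{1}{m(\B)}\int_\B \Delta^{(j)}_s \dd m\right) \le \log 1 = 0,
\]
i.e.\ $\mathcal{Q}(s)\ge 0$. For the equality case: if $\mathcal{Q}(s) = 0$ then equality holds in Jensen, which forces $\Delta^{(j)}_s$ to be $m$-a.e.\ constant, hence (by continuity and positivity) identically equal to $1$ on $\B$; but $\Delta^{(1)}_0 \equiv 1$ and $s\mapsto \Delta^{(1)}_s$ depends analytically on $s$ with $\partial_s\Delta^{(1)}_s|_{s=0} = -\partial_{z_1}(z_2\rho(z))$, which is not identically zero since $\rho\not\equiv 0$; so $\Delta^{(j)}_s\equiv 1$ can only happen at $s=0$. (Alternatively: if $\Delta^{(j)}_s\equiv 1$ on $\B$ then, combined with the vanishing of the off-diagonal minors forced by the block structure, $h^{(j)}_s$ restricted to each relevant plane would be a shear, which contradicts it being an orthogonal rotation of a nontrivial angle at points where $\rho\neq 0$.) I expect the main obstacle to be the angular-cancellation argument in the second paragraph — one must set up the polar coordinates so that the ``bad'' term genuinely integrates to zero, which requires care because $\rho$ couples all coordinates through $r$; getting the bookkeeping right there, and then invoking strict concavity of $\log$ cleanly for the equality case, is where the real work lies.
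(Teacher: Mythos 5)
Your overall strategy --- establish $\frac{1}{m(\B)}\int_\B \Delta^{(j)}_s \dd m \le 1$ and then apply Jensen --- is the right one and is, I believe, exactly the argument of \cite[Lemma~1.2]{BarBon} that the paper cites here without spelling out. However, your derivation of the key integral inequality via angular cancellation does not work. In your polar coordinates $z_1=t\cos\psi$, $z_2=t\sin\psi$, the ``bad'' term is
\begin{equation*}
(z_1\sin\phi + z_2\cos\phi)\,\phi'(r)\,\frac{z_1}{r}
= \frac{t^2\phi'(r)}{r}\,\sin(\phi+\psi)\cos\psi \, ,
\end{equation*}
and since $\phi$ and $\phi'(r)$ depend only on $t$ here (for fixed $z_3,\dots,z_d$), its angular integral is
\begin{equation*}
\int_0^{2\pi}\sin(\phi+\psi)\cos\psi\dd\psi = \pi\sin\phi \, ,
\end{equation*}
which is not zero. (Expanding, $\sin(\phi+\psi)\cos\psi = \cos^2\psi\,\sin\phi + \sin\psi\cos\psi\,\cos\phi$; only the second summand integrates to zero over $\psi$.) So your reduction $\int_{\mathrm{disc}}\Delta^{(1)}_s=\int_{\mathrm{disc}}\cos\phi$ and the ensuing bound $\int_\B\Delta^{(j)}_s\dd m\le m(\B)$ are not established.

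The good news is that the identity you first ``hoped for'' is true and easier than you feared. Since $\Delta^{(1)}_s(z)=\partial_{z_1}\left[(h^{(1)}_s)_1(z)\right]$, just use Fubini along the $z_1$-axis: for fixed $(z_2,\dots,z_d)$ with $z_2^2+\cdots+z_d^2<1$ set $a=(1-z_2^2-\cdots-z_d^2)^{1/2}$, so $z_1$ ranges over $[-a,a]$, and the fundamental theorem of calculus plus the boundary condition $h^{(1)}_s|_{\partial\B}=\id$ give
\begin{equation*}
\int_{-a}^a \Delta^{(1)}_s\dd z_1 = (h^{(1)}_s)_1(a,z_2,\dots,z_d) - (h^{(1)}_s)_1(-a,z_2,\dots,z_d) = 2a \, ,
\end{equation*}
which is $\int_{-a}^a 1\dd z_1$. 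Integrating over the remaining variables yields $\int_\B\Delta^{(1)}_s\dd m = m(\B)$ exactly, and the rotational-symmetry identity stated just before \eqref{e.def_Q} carries this to every $j$. Jensen then gives $\mathcal{Q}(s)\ge 0$.

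Your analyticity argument for the equality case is also incomplete: nonvanishing of $\partial_s\Delta^{(1)}_s|_{s=0}$ rules out $\Delta^{(1)}_s\equiv 1$ only for $s$ near $0$, not for every $s\in(0,1]$. A clean fix: if $\mathcal{Q}(s)=0$, then strict concavity of $\log$, continuity of $\Delta^{(1)}_s$, and $\int_\B\Delta^{(1)}_s\dd m=m(\B)$ force $\Delta^{(1)}_s\equiv 1$ on $\B$. But on the $z_2$-axis one has $z_1=0$, so the ``bad'' term drops out and $\Delta^{(1)}_s(0,z_2,0,\dots,0)=\cos\left(s\rho(|z_2|)\right)$. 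For $s>0$ the continuous function $z_2\mapsto s\rho(|z_2|)$ vanishes at $z_2=\pm 1$ and is not identically zero, so by the intermediate value theorem it takes values in $(0,2\pi)\cup(-2\pi,0)$, where cosine is strictly less than $1$. Hence $\Delta^{(1)}_s\not\equiv 1$ for $s>0$, and $\mathcal{Q}(s)>0$.
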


\begin{proof}
Consider $j=1$ in \eqref{e.def_Q}.
Then the lemma follows from Jensen inequality and a geometric argument: see \cite[Lemma~1.2]{BarBon}.
\end{proof}

Since each $h^{(j)}_{t_j}$ is volume preserving, we obtain from \eqref{e.def_Q} and \cref{l.minors} that
\begin{equation}\label{e.int_log_minor}
\frac{1}{m(\B)} \int_{\B} \log \det\nolimits_j Dh_t \dd m = -{\mathcal Q}(t_j) \, .
\end{equation}
This formula will be essential for our deformations of Lyapunov exponents: roughly speaking, it will allow us to move the summed exponents $\hat\lambda_j$ simultaneously and independently, as we will see in the next section.  

\medskip

Let us discuss a few other features of the model deformation. 

\medskip

Recall the definitions \eqref{e.hor_cone}, \eqref{e.ver_cone} of standard cones in $\R^d$.
By the transversality assumption \eqref{e.transv_hor}, 
if $\alpha > 1$ is large enough and $\beta \coloneq \alpha^{-1}$ then 
for all $z \in \B$, $t \in [0,1]^{d-1}$, and $j \in \{1,\dots,d-1\}$, we have:
\begin{equation}\label{e.nested_cones}
Dh_t(z) \cH_j(\beta) \subseteq \cH_j(\alpha) \, \quad \text{and} \quad
Dh_t^{-1}(z) \cV_j(\beta) \subseteq \cV_j(\alpha) \, .
\end{equation}
In fact the inclusions are equivalent to one another since the complement of a horizontal cone $\cH_j(\tau)$ is (modulo sets of non-empty interior) a vertical cone $\cV_j(\tau)$.
Let us fix these numbers $\alpha > \beta > 0$ from now on.

Let $P_j$ and $Q_j \colon \R^d \to \R^d$ be the projections on the spaces $\R^j \times \{0\}^{d-j}$ and $\{0\}^j \times \R^{d-j}$, respectively.
We fix $\kappa > 0$ such that
\begin{equation}\label{e.kappa}
\left\{
\begin{array}{ccc}
v \in \cH_j(\beta) & \Rightarrow & \; \; \| P_j \, Dh_t(z) \, v \| \ge \kappa \, \|P_j \, v \| \, , \\
v \in \cV_j(\beta) & \Rightarrow &  \| Q_j \, Dh_t^{-1}(z) \, v \|  \ge \kappa \, \|Q_j \, v\|  \, .
\end{array}
\right.
\end{equation}

\medskip

Let us recall a few facts about exterior powers. 
Let $\{e_1,\dots,e_d\}$ denote the canonical basis in $\R^d$.
Then, for each $k \in \{1,\dots,d\}$, $\wed^k \R^d$ is a vector space with the \emph{canonical} basis $\{e_{i_1} \wedge \dots \wedge e_{i_k}\}_{i_1 < \dots < i_k}$.
We endow $\wed^k \R^d$ with the inner product that makes the canonical basis orthonormal. 
There is a \emph{wedge} operation $(v,w) \in (\wed^k \R^d) \times (\wed^\ell \R^d) \mapsto v \wedge w \in \wed^{k+\ell} \R^d$ which is associative, mutilinear, and skew-symmetric (i.e., $w \wedge v = (-1)^{k\ell} v \wedge w$).
Given a linear map $A \colon \R^d \to \R^d$, there are induced linear maps $\wed^k A \colon \wed^k \R^d \to \wed^k \R^d$ such that 
$(\wed^k A) (e_{i_1} \wedge \dots \wedge e_{i_k}) = Ae_{i_1} \wedge \dots \wedge Ae_{i_k}$.
The entries of the matrix of the exterior power $\wed^k A$ with respect to the canonical basis are exactly the $k \times k$ minors of $A$; this is called the \emph{$k$-th compound matrix} of $A$. More precisely, if $I=\{i_1<\dots<i_k\}$ and $J=\{j_1<\dots<j_k\}$ then
\[
\left\langle e_{i_1} \wedge \dots \wedge e_{i_k} , \, (\wed^k A)(e_{j_1} \wedge \dots \wedge e_{j_k}) \right\rangle = \det A[I,J] \, .
\]
So the Cauchy--Binet formula used before is nothing but the functoriality of the exterior powers, i.e., $\wed^k(AB) = (\wed^k A)(\wed^k B)$.
If $A$ is an orthogonal linear map, then so is $\wed^k A$.
Therefore, the inner product on $\wed^k \R^d$ depends only on the inner product on $\R^d$, and not on the choice of orthonormal basis; equivalently, the $k$-fold exterior power of an inner product space is an inner product space.
From a more geometrical viewpoint, if $v_1$, \dots, $v_k$ are vectors in $\R^d$ then the norm of $v_1 \wedge \dots \wedge v_k$ is the $k$-dimensional volume of the parallelepiped spanned by these vectors.

\medskip 

Coming back to the model deformation, let us note for later use that for any $\nu>0$ there exists $\gamma \in (0,\beta)$ such that for all $z \in \B$, $t \in [0,1]^{d-1}$, $j \in \{1,\dots,d-1\}$ and all linearly independent vectors $w_1$, \dots, $w_j \in \cH_j(\gamma)$, 
we have
\begin{equation}\label{e.minor_cone}
\left| \log \frac{\left\langle (\wed^j Dh_t(z))(w_1 \wedge \dots \wedge w_j) , e_1 \wedge \dots \wedge e_j \right\rangle}{\left\langle w_1 \wedge \dots \wedge w_j , e_1 \wedge \dots \wedge e_j \right\rangle} - \log \det\nolimits_j Dh_t(z) \right| < \nu \, ;
\end{equation}
this statement includes the fact that the numerator and the denominator in the expression above are both  nonzero and have the same sign.

\section{Proof of the central proposition}\label{s.central_proof}

We now assemble the material of the previous three sections in order to prove \cref{p.central} (which, as we have seen, implies \cref{t.majorize}). Recall that we need to define a multiparametric perturbation $(f_t)$ of $f$ whose Lyapunov exponents ``spread out'' in a controlled way (see \cref{f.shaky_cube}). Our diffeomorphisms $f_t$ will be damping perturbations of the given $f$. Recall from \cref{ss.damping_properties} that the damping time $N$ (that is, the minimal return time to the support of the perturbation) must be roughly proportional to $1/\sigma$, where $\sigma$ is the bound on the gap. By preservation of the measure, the volume of the support of the perturbations cannot be larger than $1/N \asymp \sigma$. This upper bound can be essentially attained using Rokhlin tower lemma to select convenient places to perturb. 
The actual maps $f_t$ will be defined by inserting copies of our model deformations on many ``Lyapunov balls'' (i.e., balls with respect to the Lyapunov charts constructed in \cref{ss.adapted_charts}). Our preparations will ensure that such maps $f_t$ are Anosov with simple dominated splittings $E_{1,t} \oplus \cdots \oplus E_{d,t}$, and what we are left to do is to estimate their Lyapunov exponents. It is convenient to work with the summed exponents $\hat{\lambda}_j(f_t)$, which equal the average log of the expansion of $j$-dimensional volume along the invariant subbundle $E_{1,t} \oplus \cdots \oplus E_{j,t}$. It boils down that, modulo a small error\footnote{This error is due to the fact that the invariant subbundles are not in general not smoothly integrable, and so the Lyapunov charts ``straighten'' these bundles only approximately. But this ``noise'' can be made arbitrarily small by reducing the radii of the Lyapunov balls.}, $\hat{\lambda}_j(f_t)$ drops as compared to the unperturbed $\hat{\lambda}_j(f)$, and the drop is proportional to two things (see \eqref{e.upshot}): the integrals \eqref{e.int_log_minor} that control the basic effect of each model perturbation, and the total measure of the supports of the perturbations. The integrals are bounded by constants, and the measure of the  support of the whole damping perturbation is, as explained above, of the order of $\sigma$. So ultimately we are able to move Lyapunov exponents by an amount that depends on the gap $\sigma$ but not on $f$ itself, which is an essential feature of \cref{p.central}.

\begin{proof}[Proof of \cref{p.central}]
Let $u \in \{1,\dots,d-1\}$ and $a_1$, \dots, $a_{d-1}$, $\sigma$, $\delta_0 > 0$ be given.
Recall that ${\mathcal Q}(s)$ denotes the quantity defined in \eqref{e.def_Q}, which is a continuous function of $s \in [0,1]$, vanishes at $s=0$, and is positive for $s>0$ (by \cref{l.Jensen}).
Reducing $\delta_0$ if necessary, we may assume that
\[
3 \delta_0 \max\{a_1,\dots,a_{d-1}\} \le {\mathcal Q}(1) \, .
\]
For each $j \in \{1,\dots,d-1\}$, 
define:
\begin{equation}\label{e.def_bj}
b_j \coloneqq \text{ the least value in $[0,1]$ such that } {\mathcal Q}(b_j) = 3 \delta_0 a_j \, .
\end{equation}
We fix several other constants. 
Let $\alpha > \beta > 0$ be openings with property \eqref{e.nested_cones}.
Let $\kappa > 0$ be a constant with property \eqref{e.kappa}.
Let $\nu \coloneqq (\delta_0/2) \min\{a_1,\dots,a_{d-1}\}$
and take $\gamma \in (0,\beta)$ with property \eqref{e.minor_cone}.
Let $N \coloneqq N_0(\alpha, \beta, \gamma, \kappa, \sigma)$ be given by \eqref{e.N0}.
Let $\delta \coloneqq \delta_0 / N$; this will be the scaling factor as in the statement of \cref{p.central}.
Choose a very small $\epsilon>0$; this choice will be apparent at the end of the proof.

Now let us pick $f \in \Diff_m^\reg(M)$ as in the statement of \cref{p.central}, that is, a conservative Anosov $C^\reg$-diffeomorphism of unstable index $u$, admitting a simple dominated splitting, and such that $\mathsf{g}_u(\boldsymbol{\lambda}(f)) \ge \sigma$.

We apply \cref{p.adapted_metric} and find a Lyapunov metric so that the associated expansion functions $\chi_1$, \dots, $\chi_d$ obey the $L^1$-estimate \eqref{e.adapted_L1} with the chosen value of $\epsilon$.
Consider the sets 
\[
R_j \coloneqq \big\{x \in M \st | \chi_j(x) - \lambda_j(f)| \ge \sigma/2 \big\} \, .
\]
By a trivial estimate (Markov's inequality),
$m(R_j) \le 2 \sigma^{-1} \epsilon$.
Thus, the open set
\[
U \coloneqq M \setminus  \bigcup_{j=1}^{d-1}  \bigcup_{n=-N+1}^{N-1}  f^n(R_j) 
\]
has measure $m(U) > 1 - 4 d N \sigma^{-1}\epsilon$.

By the Rokhlin Lemma, there exists a measurable set $Z_1$ disjoint from $f(Z_1)$, $f^2(Z_1)$, \dots, $f^{N-1}(Z_1)$ with measure 
\[
m(Z_1) > \frac{1}{N} - \epsilon \, .
\]
Then 
\[
m (Z_1 \cap U) > \frac{1}{N} - (1 + 4 d N \sigma^{-1}) \epsilon \, .
\]
Let $Z_2$ be a compact subset of $Z_1 \cap U$ satisfying the same bound.
Then take an open neighborhood $Z_3 \subseteq U$ of $Z_2$ that satisfies the same bound, and, moreover, is disjoint from $f(Z_3)$, $f^2(Z_3)$, \dots, $f^{N-1}(Z_3)$.

Let $\Phi_x \colon B_0 \to M$ be the Lyapunov charts coming from \cref{p.adapted_charts};
here $B_0$ is a rescaling of the unit closed ball $\B$, that is $B_0 = s_0 \B$ for some $s_0>0$.
For every $x \in M$ and every $s \in (0,s_0]$, let us define a \emph{Lyapunov ball} as
\[
\cB(x,s) \coloneqq \Phi_x ( s \B) \, .
\]
Reducing $s_0$ if necessary, we assume that the following properties hold:
\begin{align}
\label{e.small_ball_chi}
y, y' \in \cB(x,s) \ &\Rightarrow \  |\chi_j(y) - \chi_j(y')| < \epsilon \, , \\
\label{e.small_ball_Phi}
y     \in \cB(x,s) \ &\Rightarrow \  \big\| D\Phi_x^{-1}(y) \cL_y  - \Id \big\| < \epsilon \, , 
\end{align}
where $\cL_y \coloneqq D\Phi_y(0)$.

Since the Lyapunov charts $\Phi_x$ form a relatively compact subset of $C^1(B_0,M)$, 
the family of Lyapunov balls contained in the open set $Z_3$ forms a Vitali covering of $Z_3$.
Therefore, there exists a finite collection of disjoint Lyapunov balls $\cB(x_1,s_1)$, \dots, $\cB(x_p, s_p) \subseteq Z_3$
whose union $Z_4$ has measure $m(Z_4) > m(Z_3) - \epsilon$. 
To simplify notation, 
let $\cB_i \coloneqq \cB(x_i,s_i)$ and define
\begin{equation}\label{e.def_Psi}
\Psi_i \colon \B \to M \quad \text{by} \quad
\Psi_i(z) \coloneqq \Phi_{x_i}(s_i z) \, ;
\end{equation}
so $\Psi_i$ has constant Jacobian and image $\cB_i$.

For each $t \in [0,1]^{d-1}$, let us define $g_t \in \Diff_m^\infty(M)$ as follows:
$g_t$ equals the identity outside of $Z_4$, and on each $\cB_i$ it is defined as:
\begin{align}\label{e.def_g}
g_t \coloneqq \Psi_i \circ h_{Bt} \circ \Psi_i^{-1} \, , 
\end{align}
where $B(t_1,\dots,t_{d-1}) \coloneqq (b_1 t_1, \dots, b_{d-1} t_{d-1})$, and $b_j$ comes from \eqref{e.def_bj}.
The sought-after deformation 
of $f$ is defined as:
\[
f_t \coloneqq f \circ g_t \, , \quad \text{for } t \in [0,1]^{d-1} \, .
\] 

We now need to check that the maps $f_t$ have the properties asserted in \cref{p.central}.
As a first step, we show:

\begin{lemma}\label{l.checker}
Each $f_t$ is a $(\alpha,\beta,\kappa,\sigma,N)$-damping perturbation of $f$.
\end{lemma}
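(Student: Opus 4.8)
The plan is to verify, one at a time, the four defining conditions \eqref{i.damping_cones}--\eqref{i.damping_gap} of an $(\alpha,\beta,\kappa,\sigma,N)$-damping perturbation for $f_t=f\circ g_t$, with $\alpha$, $\beta$, $\kappa$, $\sigma$, $N$ exactly those fixed at the start of this proof. That $f_t\in\Diff_m^\reg(M)$ is already clear, since $g_t$ is built by conjugating the measure-preserving model deformations by the constant-Jacobian maps $\Psi_i$ and extending by the identity.

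First I would locate the support. Since $g_t$ and its derivative agree with the identity off the compact set $Z_4=\bigsqcup_i\cB_i$, the set $Z=\{x:f_t(x)\neq f(x)\text{ or }Df_t(x)\neq Df(x)\}$ is contained in $Z_4\subseteq Z_3$. Because $Z_3,f(Z_3),\dots,f^{N-1}(Z_3)$ were chosen pairwise disjoint, the compact sets $Z_4,f(Z_4),\dots,f^{N-1}(Z_4)$ are pairwise disjoint, and since $\overline{f^n(Z)}\subseteq f^n(Z_4)$ this yields condition \eqref{i.damping_tower}. For condition \eqref{i.damping_gap} I would use that $Z\subseteq U$: by the very definition of $U$, when $x\in Z$ the orbit pieces $\{f^n(x)\}$ of length $N-1$ (forward and backward) avoid every bad set $R_j$, so each relevant $\chi_j(f^n x)$ is within the chosen threshold of $\lambda_j(f)$; averaging, and using that $\mathsf{g}_u$ decreases by at most twice the largest coordinatewise deviation, the hypothesis $\mathsf{g}_u(\boldsymbol{\lambda}(f))\ge\sigma$ forces both time-averaged vectors in \eqref{i.damping_gap} to have $\mathsf{g}_u\ge\sigma/2$ -- which is exactly why the threshold defining the $R_j$'s and the value of $N_0$ were calibrated as they are.

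The substantive point is the cone and norm conditions \eqref{i.damping_cones}, \eqref{i.damping_norm} for $Df_t$. Off $Z_4$ we have $Df_t=Df$, and there they follow at once from the invariance properties \eqref{e.hor_cone_field_inv}--\eqref{e.ver_cone_field_inv} of the Lyapunov cone fields, together with $\chi_j>\chi_{j+1}$ for the cones and $\chi_u>0>\chi_{u+1}$ (and $\kappa<1$) for the norms. On a ball $\cB_i$, writing $z=\Psi_i^{-1}(x)$, $z'=h_{Bt}(z)$ and $y=g_t(x)=\Psi_i(z')$, we have $Df_t(x)=Df(y)\,D\Psi_i(z')\,Dh_{Bt}(z)\,D\Psi_i(z)^{-1}$. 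I would exploit \cref{p.adapted_charts} -- so that $\cL_x=D\Phi_x(0)$ is a Lyapunov-isometry carrying the standard cones $\cH_j(\tau),\cV_j(\tau)$, the coordinate projections, and the Euclidean norm to the Lyapunov ones -- together with the near-isometry estimate \eqref{e.small_ball_Phi}, to transfer the problem into $\R^d$ at the cost of an $O(\epsilon)$ distortion; there $Dh_{Bt}(z)$ acts on the standard cones and (on the $\cH_u$, $\cV_u$ components) on the norm by \eqref{e.nested_cones} and \eqref{e.kappa}, legitimately since $Bt\in[0,1]^{d-1}$; then one transfers back, again with $O(\epsilon)$ distortion, and finally composes with $Df(y)$, which by \eqref{e.hor_cone_field_inv} only shrinks horizontal openings and which commutes with $P^\mathrm{u}$ and expands $E^\mathrm{u}$ by at least $e^{\chi_u(y)}>1$. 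The horizontal and vertical (equivalently forward and backward) halves are treated in parallel, since \eqref{e.nested_cones}, \eqref{e.kappa} already come in such pairs and the chart transfer is symmetric.

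I expect the transfer step to be the only real obstacle: we need these estimates on each $\cB_i$ with \emph{exactly} the constants $\alpha$, $\beta$, $\kappa$ fixed before $f$ was chosen, whereas conjugation by the non-linear charts $\Psi_i$ introduces $(1+O(\epsilon))$-losses. I would absorb these by securing a little slack when \eqref{e.nested_cones} and \eqref{e.kappa} are invoked: since the transversality condition \eqref{e.transv_hor} is open and holds on the compact parameter set $\B\times[0,1]^{d-1}$, one may take $\alpha$ large (with $\beta=\alpha^{-1}$) so that in fact $Dh_t(z)\cH_j(2\beta)\subseteq\cH_j(\alpha/2)$ and $Dh_t^{-1}(z)\cV_j(2\beta)\subseteq\cV_j(\alpha/2)$ for all $z\in\B$, $t\in[0,1]^{d-1}$, and choose $\kappa\in(0,1)$ small enough that the norm bounds \eqref{e.kappa} still hold with opening $2\beta$ in place of $\beta$; then, taking the final $\epsilon$ (equivalently $s_0$) small enough, the chart distortions stay within this slack and the desired inclusions and inequalities hold with the original $\alpha$, $\beta$, $\kappa$, and with the contribution of $Df(y)$ at the end. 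Everything else in the lemma is immediate from the construction of $U$ and of the tower $Z_2\subseteq Z_3\subseteq U$.
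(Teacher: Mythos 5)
Your proof follows the same four-step verification as the paper's (conditions (i)--(iv) of the damping definition, in the same order, using the same ingredients: the tower structure of $Z_4$ for (iii), $Z_4 \subseteq U$ for (iv), and \eqref{e.nested_cones}, \eqref{e.kappa} composed with the contracting effect of $Df$ on Lyapunov cone openings for (i) and (ii)). The one place where you go beyond the paper is the chart-transfer step, and this is a genuine catch: the paper's proof applies \eqref{e.nested_cones} and \eqref{e.kappa} directly as if $Dg_t(x)$ were literally $\cL_{g_t(x)}\,Dh_{Bt}(z)\,\cL_x^{-1}$, whereas in fact
\[
\cL_{g_t(x)}^{-1}\,Dg_t(x)\,\cL_x \;=\; (\Id + P)^{-1}\,Dh_{Bt}(z)\,(\Id + Q),\qquad \|P\|,\|Q\|<\epsilon,
\]
because the Lyapunov charts are exact $\cL$-isometries only at the center of each ball (cf.\ \eqref{e.small_ball_Phi}). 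The paper itself keeps careful track of these $(\Id+P)$, $(\Id+Q)$ factors when estimating the exponents in \cref{l.key_estimate}, but silently ignores them in \cref{l.checker}. Your remedy -- securing slack (e.g.\ invoking \eqref{e.nested_cones} with openings $2\beta$, $\alpha/2$ and \eqref{e.kappa} with a slightly larger $\kappa$) so that the $O(\epsilon)$ conjugation distortion is absorbed by a final shrinking of $\epsilon$ -- is correct and fits the logical order of quantifiers in the proof (all of $\alpha,\beta,\kappa,\gamma,N_0,\delta$ are fixed before $\epsilon$).

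One small point worth flagging: in the part of your argument for condition (iv) you quote the paper's threshold $\sigma/2$ in the definition of $R_j$ and assert that it is ``calibrated'' to yield $\mathsf{g}_u(\text{avg}) \ge \sigma/2$. Since $\mathsf{g}_u$ is $2$-Lipschitz in $\ell^\infty$ (the differences $\xi_j-\xi_{j+1}$ pick up twice the coordinatewise deviation), a coordinatewise deviation of $\sigma/2$ only guarantees $\mathsf{g}_u(\text{avg})>0$, not $\ge\sigma/2$; and moreover the paper's union defining $U$ runs only over $j\le d-1$, so the $d$-th coordinate is controlled a priori only via $\chi_d=-\sum_{j<d}\chi_j$, giving a deviation up to $(d-1)\sigma/2$. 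The threshold in $R_j$ should therefore be of order $\sigma/(4d)$ (and the union should include $j=d$ or be supplemented by the zero-sum observation). This is a constant-bookkeeping correction to the paper that does not affect the structure of the argument, but it means the threshold is \emph{not} calibrated as written, and a fully rigorous version of either your proof or the paper's should adjust it.
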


\begin{proof}
Condition~\eqref{i.damping_cones}  in the definition of damping perturbations follows from the inclusions~\eqref{e.nested_cones} and the fact that $Df$ (resp.\ $Df^{-1}$) decreases the opening of horizontal (resp.\ vertical) cones: see \eqref{e.hor_cone_field_inv}, \eqref{e.ver_cone_field_inv}.
Let us check condition~\eqref{i.damping_norm}.
If $v \in \cH_u(x,\beta)$ then, using property~\eqref{e.kappa},
\begin{align*}
\tribar{P^\mathrm{u} D\tilde{f}(x) \, v}
= \tribar{P^\mathrm{u} Df(g_t x) \, Dg_t(x) \, v} 
\ge \tribar{P^\mathrm{u} Dg_t(x) \, v} 
\ge \kappa \, \tribar{P^\mathrm{u} v}  \, , 
\end{align*}
while if $v \in \cV_u(x,\beta)$ then $D\tilde{f}^{-1} (x) \, v \in \cV_u(\tilde{f}^{-1} (x) ,\beta)$ and so:
\begin{align*}
\tribar{P^\mathrm{s} D\tilde{f}^{-1} (x) \, v}
= \tribar{P^\mathrm{s} Dg_t^{-1}(\tilde{f}^{-1} (x)) D\tilde{f}^{-1} (x) \, v} 
\ge \kappa \, \tribar{P^\mathrm{s} D\tilde{f}^{-1} (x) \, v}
\ge \kappa \, \tribar{P^\mathrm{s} v}  \, .
\end{align*}

Condition~\eqref{i.damping_tower} follows from the fact that the set $Z_4$ is disjoint from its $N-1$ first iterates, while condition~\eqref{i.damping_gap} follows from the fact that $Z_4$ is contained in $U$.
\end{proof}

Now \cref{p.damping} ensures that each $f_t$ is an Anosov diffeomorphism of unstable index $u$ and admits a simple dominated splitting $TM = E_{1,t} \oplus \dots \oplus E_{d,t}$.
Moreover, by part \eqref{i.hor_perturb} of the \lcnamecref{p.damping},
\begin{align}
\label{e.robust_transv}
&E_{1,t}(x) \oplus \dots \oplus E_{j,t}(x)
\text{ is transverse to }
E_{j+1}(x) \oplus \dots \oplus E_d(x) \, , \\
\label{e.recover}
&E_{1,t}(x) \oplus \dots \oplus E_{j,t}(x) \subset \cH_j( x, \gamma) \quad \text{if $x \in Z_4$.}
\end{align}

The rest of the proof consists in estimating the summed Lyapunov exponents $\hat{\lambda}_j(f_t)$.

For each $x \in M$ and $j \in \{1,\dots,d\}$, let $v_j(x) \coloneqq \cL_x e_j$, which is a vector that spans $E_j(x)$.
Since these vectors form an orthonormal basis of $T_x M$, using the definition \eqref{e.def_chi} of the expansion function $\chi_j$, we obtain that for every vector $w \in T_x M$,
\begin{equation}\label{e.train1}
\biangle{Df(x) w, v_j(f(x))} = e^{\chi_j(x)} \biangle{w,v_j(x)} \, .
\end{equation}

In each exterior power of the tangent bundle we consider the corresponding exterior power of the Lyapunov metric, denoting it by the same symbol.
Let
\[
\hat{v}_{j}(x)   \coloneqq v_{1}(x) \wedge \dots \wedge v_{j}(x) \in \wed^j T_x M \, .
\]
Note that for every $\hat{w} \in \wed^j T_x M$,
\begin{equation}\label{e.train2}
\biangle{\wed^j Df(x) \hat w, \hat{v}_j(f(x))} = e^{\chi_1(x) + \cdots + \chi_j(x)} \biangle{\hat w,\hat{v}_j(x)} \, .
\end{equation}

For each $x\in M$, $j \in \{1,\dots,d\}$, and $t \in [0,1]^{d-1}$, let us choose a vector $v_{j,t}(x)$ that spans $E_{j,t}(x)$ in such a way that it depends continuously on $t$ and equals $v_j(x)$ when $t=0$.
Let
\[
\hat{v}_{j,t}(x) \coloneqq v_{1,t}(x) \wedge \dots \wedge v_{j,t}(x) \in \wed^j T_x M \, .
\]
Define the functions
\begin{equation}\label{e.def_psi}
\psi_{j,t}(x) \coloneqq 
\frac{\left| \biangle{ \wed^j Df_t(x) \, \hat v_{j,t}(x) ,  \hat v_j(f_t(x))} \right|}
{\biangle{\hat v_{j,t}(x) , \hat v_j(x)}} 
 \, .
\end{equation}
Note that the numerator and the denominator in this formula are both positive, thanks to \eqref{e.robust_transv}.

\begin{lemma}\label{l.formula_exponent}
$\displaystyle \int_M \log \psi_{j,t} \dd m = \hat{\lambda}_j(f_t)$.
\end{lemma}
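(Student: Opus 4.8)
The plan is to interpret the right-hand side $\hat\lambda_j(f_t)=\lambda_1(f_t)+\cdots+\lambda_j(f_t)$ as the top Lyapunov exponent of the exterior-power cocycle $\wed^j Df_t$, and to show that $\log\psi_{j,t}$ differs from the obvious integrand for that exponent by a continuous coboundary; ergodicity of $m$ with respect to $f_t$ then gives the equality of the integrals. Throughout, $t$ is fixed and norms refer to the (exterior powers of the) Lyapunov metric, still written $\tribar{\mathord{\cdot}}$.

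First I would collect a few elementary facts coming from continuity and compactness of $M$. The function $q(y)\coloneqq\biangle{\hat v_{j,t}(y),\hat v_j(y)}$ is continuous and, by \eqref{e.robust_transv} and the observation recorded immediately after \eqref{e.def_psi}, strictly positive, hence $q$ and $\log q$ are bounded; likewise $y\mapsto\tribar{\hat v_{j,t}(y)}$ is continuous and positive since $v_{1,t}(y),\dots,v_{j,t}(y)$ are linearly independent. Since the subbundle $E_{1,t}\oplus\cdots\oplus E_{j,t}$ is $Df_t$-invariant, the line $\wed^j(E_{1,t}\oplus\cdots\oplus E_{j,t})$ is $\wed^j Df_t$-invariant, so one may write $\wed^j Df_t(x)\,\hat v_{j,t}(x)=c(x)\,\hat v_{j,t}(f_t(x))$ with $c(x)\neq 0$. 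Substituting this into \eqref{e.def_psi} and using $q>0$ yields, after a short computation,
\[
\log\psi_{j,t}=\phi+\zeta\circ f_t-\zeta,\qquad
\phi(x)\coloneqq\log\frac{\tribar{\wed^j Df_t(x)\,\hat v_{j,t}(x)}}{\tribar{\hat v_{j,t}(x)}},\qquad
\zeta\coloneqq\log q-\log\tribar{\hat v_{j,t}}.
\]
Both $\phi$ and $\zeta$ are continuous, hence bounded and in $L^1(m)$, and $\int_M(\zeta\circ f_t-\zeta)\dd m=0$ because $m$ is $f_t$-invariant; therefore $\int_M\log\psi_{j,t}\dd m=\int_M\phi\dd m$.

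It then remains to verify $\int_M\phi\dd m=\hat\lambda_j(f_t)$, and here I would use that $f_t$ is a conservative $C^\reg$ Anosov diffeomorphism admitting a simple dominated splitting — exactly what \cref{p.damping} provides. By the Anosov–Sinai theorem, $m$ is ergodic for $f_t$; and, as recalled around \eqref{e.exp_integrals}, the Oseledets splitting of $f_t$ agrees $m$-almost everywhere with $E_{1,t}\oplus\cdots\oplus E_{d,t}$, so $\wed^j(E_{1,t}\oplus\cdots\oplus E_{j,t})$ is the top Oseledets line of $\wed^j Df_t$. Consequently, for $m$-a.e.\ $x$ the Birkhoff average of $\phi$ along the $f_t$-orbit of $x$ equals $\lim_n\frac1n\log\tribar{\wed^j Df_t^{\,n}(x)\,\hat v_{j,t}(x)}$ (the boundary term $\frac1n\log\tribar{\hat v_{j,t}(f_t^n(x))}$ being $o(n)$ by boundedness of $\tribar{\hat v_{j,t}}$), which in turn equals $\lambda_1(f_t)+\cdots+\lambda_j(f_t)=\hat\lambda_j(f_t)$; by Birkhoff's ergodic theorem this common almost-everywhere value is $\int_M\phi\dd m$. (Alternatively, $\int_M\phi\dd m=\hat\lambda_j(f_t)$ is a direct instance of the integral formula \eqref{e.exp_integrals} applied to $\wed^j Df_t$ restricted to its dominating one-dimensional invariant subbundle.) I do not expect any real obstacle here; the only point requiring attention is that the second argument of the pairing in \eqref{e.def_psi} is the \emph{unperturbed} reference $j$-vector $\hat v_j$ rather than the $f_t$-invariant $\hat v_{j,t}$ — precisely the mismatch that generates the coboundary $\zeta\circ f_t-\zeta$, which integrates to zero and hence does not affect the result.
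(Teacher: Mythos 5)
Your proposal is correct and takes essentially the same route as the paper's proof. Both identify $\hat\lambda_j(f_t)$ as the top Lyapunov exponent of $\wed^j Df_t$ with Oseledets line spanned by $\hat v_{j,t}$, and both show that $\log\psi_{j,t}$ is cohomologous to the norm-ratio expansion function $\phi(x)=\log\bigl(\tribar{\wed^j Df_t(x)\,\hat v_{j,t}(x)}/\tribar{\hat v_{j,t}(x)}\bigr)$ (the paper's $\log\tilde\psi_{j,t}$); your version is simply more explicit about the transfer function $\zeta=\log q-\log\tribar{\hat v_{j,t}}$ and about deriving $\int\phi\,\mathrm{d}m=\hat\lambda_j(f_t)$ from Birkhoff, whereas the paper cites Arnold's theorem and notes cohomology without writing out $\zeta$.
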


\begin{proof}
The top Lyapunov exponent of the linear cocycle $\wed^j Df_t$ equals $\hat{\lambda}_j(f_t)$, has multiplicity $1$, and the corresponding Oseledets space at a regular point $x$ is spanned by $\hat v_{j,t}(x)$ (see \cite[Theorem~5.3.1]{LArnold}).
In particular, the corresponding expansion function
\[
\log \frac
{\tribar{ \wed^j Df_t(x) \, \hat v_{j,t}(x)}} 
{\tribar{\hat v_{j,t}(x)}} \eqcolon \log \tilde \psi_{j,t}(x) 
\]
has integral $\hat{\lambda}_j(f_t)$.
Furthermore, we have: 
\[
\wed^j Df_t(z) \, \hat v_{j,t}(x) = 
\pm \tilde \psi_{j,t}(x) \, \hat v_{j,t}(f_t(x)) \, .
\]
Substituting into \eqref{e.def_psi}, we see that the functions $\log \psi_{j,t}$ and $\log \tilde \psi_{j,t}$ are cohomologous.
In particular they have the same integral, proving the \lcnamecref{l.formula_exponent}.
\end{proof}

Let us analyze the function $\log \psi_{j,t}$ in order to estimate its integral.

\begin{lemma}\label{l.no_change}
If $x \not\in Z_4$ then $\log \psi_{j,t}(x) = \chi_1(x) + \dots + \chi_j(x)$.
\end{lemma}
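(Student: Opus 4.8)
The plan is to exploit the fact that outside the closed set $Z_4 = \cB_1 \sqcup \dots \sqcup \cB_p$ the perturbation $g_t$ does nothing, and then to invoke the exact multiplicativity identity \eqref{e.train2} for the unperturbed exterior-power cocycle.

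First I would note that each Lyapunov ball $\cB_i = \Phi_{x_i}(s_i \B)$ is compact, so $Z_4$ is closed, and by construction (see \eqref{e.def_g}) the diffeomorphism $g_t$ coincides with the identity on the open set $M \setminus Z_4$. Hence for $x \notin Z_4$ we have $g_t(x) = x$ and $Dg_t(x) = \Id$ on a whole neighbourhood of $x$, so that $f_t(x) = f(x)$, $Df_t(x) = Df(x)$, and therefore $\wed^j Df_t(x) = \wed^j Df(x)$ and $\hat v_j(f_t(x)) = \hat v_j(f(x))$. Substituting this into the definition \eqref{e.def_psi} of $\psi_{j,t}$ gives
\[
\psi_{j,t}(x) = \frac{\bigl| \biangle{ \wed^j Df(x) \, \hat v_{j,t}(x) ,  \hat v_j(f(x))} \bigr|}{\biangle{\hat v_{j,t}(x) , \hat v_j(x)}} \, .
\]

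Next I would apply formula \eqref{e.train2} to the vector $\hat w = \hat v_{j,t}(x)$. The essential point is that \eqref{e.train2} is valid for \emph{every} element of $\wed^j T_x M$, so it is irrelevant that $\hat v_{j,t}(x)$ need not be parallel to $\hat v_j(x)$; indeed the splitting $E_{1,t} \oplus \dots \oplus E_{d,t}$ may well differ from $E_1 \oplus \dots \oplus E_d$ even at points of $M \setminus Z_4$, since a dominated splitting is not a local object. We obtain
\[
\biangle{ \wed^j Df(x) \, \hat v_{j,t}(x) ,  \hat v_j(f(x))} = e^{\chi_1(x) + \cdots + \chi_j(x)} \, \biangle{\hat v_{j,t}(x) , \hat v_j(x)} \, .
\]
Since $e^{\chi_1(x) + \cdots + \chi_j(x)} > 0$ and the inner product $\biangle{\hat v_{j,t}(x) , \hat v_j(x)}$ is strictly positive by the transversality \eqref{e.robust_transv} (as already observed right after \eqref{e.def_psi}), taking absolute values and substituting into the previous display yields $\psi_{j,t}(x) = e^{\chi_1(x) + \cdots + \chi_j(x)}$, i.e.\ $\log \psi_{j,t}(x) = \chi_1(x) + \dots + \chi_j(x)$, which is the claim.

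There is no real obstacle here; the only point worth stating carefully — and which I would flag explicitly — is that one must \emph{not} argue by claiming $\hat v_{j,t}(x)$ spans the same line as $\hat v_j(x)$ off $Z_4$. That step is unnecessary precisely because \eqref{e.train2} is an identity for all vectors and the denominator in \eqref{e.def_psi} is nonzero by \eqref{e.robust_transv}, so the absolute values cancel the putative tilt and the possibly nonzero sign coming from the Lyapunov basis.
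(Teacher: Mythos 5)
Your proof is correct and follows essentially the same route as the paper's one-line proof ("$f_t=f$ on a neighborhood of $x$, so the lemma follows from \eqref{e.train2}"); you have merely spelled out the steps, including the useful remark that \eqref{e.train2} is applied to the arbitrary vector $\hat v_{j,t}(x)$ rather than to $\hat v_j(x)$, since the perturbed splitting differs from the original even off $Z_4$.
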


\begin{proof}
If $x\notin Z_4$ then $f_t=f$ on a neighborhood of $x$ and so the \lcnamecref{l.no_change} follows from \eqref{e.train2}.
\end{proof}

On the other hand, on $Z_4 = \bigsqcup_i \cB_i$ we have the following estimate:

\begin{lemma}\label{l.key_estimate}
If $x \in \cB_i$ and $z \coloneqq \Psi_i^{-1}(x)$ then:
\begin{equation} \label{e.key_estimate}
\Big| \log \psi_{j,t}(x) - \big[ \chi_1(x) + \dots + \chi_j(x) + \log \det\nolimits_j Dh_{Bt}(z) \big] \Big| \le \nu + O(\epsilon) \, .
\end{equation}
\end{lemma}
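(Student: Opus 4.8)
The plan is to follow $\log\psi_{j,t}$ through the splitting $f_t = f\circ g_t$: the $f$-factor is handled exactly by formula \eqref{e.train2}, the $g_t$-factor is handled by the model-deformation estimate \eqref{e.minor_cone}, and the Lyapunov chart $\Psi_i$ contributes only $O(\epsilon)$ errors because it straightens the Lyapunov frame only to first order (properties \eqref{e.small_ball_Phi} and \eqref{e.small_ball_chi}).

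First I would set $x' \coloneqq g_t(x)$ and $x'' \coloneqq f_t(x) = f(x')$, noting that $x' \in \cB_i$ since $g_t$ maps $\cB_i$ onto itself. Since $Df_t(x) = Df(x')\circ Dg_t(x)$ and $\wed^j$ is functorial, \eqref{e.train2} applied at $x'$ gives
\[
\bigl| \biangle{\wed^j Df_t(x)\,\hat v_{j,t}(x),\,\hat v_j(x'')} \bigr|
= e^{\chi_1(x') + \dots + \chi_j(x')} \, \bigl| \biangle{\wed^j Dg_t(x)\,\hat v_{j,t}(x),\,\hat v_j(x')} \bigr| \, .
\]
Both $x$ and $x'$ lie in $\cB_i$, so \eqref{e.small_ball_chi} lets me replace each $\chi_l(x')$ by $\chi_l(x)$ at total cost $O(\epsilon)$, and the problem is reduced to estimating $\log$ of the ratio $\bigl| \biangle{\wed^j Dg_t(x)\,\hat v_{j,t}(x),\,\hat v_j(x')} \bigr| / \biangle{\hat v_{j,t}(x),\,\hat v_j(x)}$.

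Next I would conjugate into the model chart. Writing $z = \Psi_i^{-1}(x)$ and letting $\cL_x$, $\cL_{x'}$ be the isometries onto the respective tangent spaces from \cref{p.adapted_charts}, property \eqref{e.small_ball_Phi} (together with the fact that the scaling factor in $\Psi_i = \Phi_{x_i}(s_i\,\cdot)$ cancels under the chain rule applied to $g_t = \Psi_i\circ h_{Bt}\circ\Psi_i^{-1}$) yields $Dg_t(x) = \cL_{x'}\,[\,A\,Dh_{Bt}(z)\,B\,]\,\cL_x^{-1}$ with $\|A - \Id\|, \|B - \Id\| = O(\epsilon)$. Because $\cL_x, \cL_{x'}$ are isometries for the Lyapunov metric — hence $\wed^j\cL_x, \wed^j\cL_{x'}$ are isometries for its exterior power — pulling the inner products back to $\wed^j\R^d$ turns the displayed ratio into $\langle\wed^j(A\,Dh_{Bt}(z)\,B)\,\hat w, E_J\rangle / \langle\hat w, E_J\rangle$, where $E_J \coloneqq e_1 \wedge\dots\wedge e_j$, $w_l \coloneqq \cL_x^{-1} v_{l,t}(x)$, and $\hat w \coloneqq w_1 \wedge\dots\wedge w_j$. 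By \eqref{e.recover} the subspace $E_{1,t}(x)\oplus\dots\oplus E_{j,t}(x)$ lies in $\cH_j(x,\gamma)$, so the $w_l$ are linearly independent vectors of the standard cone $\cH_j(\gamma)$; in particular $\langle\hat w, E_J\rangle \neq 0$, and $\|\hat w\| / |\langle\hat w, E_J\rangle|$ is bounded by a constant depending only on $\gamma$ and $d$ (the ``slope'' of a $j$-plane contained in a cone of opening $\gamma$). Expanding $\wed^j(A\,Dh_{Bt}(z)\,B) = (\Id + O(\epsilon))\,\wed^j Dh_{Bt}(z)\,(\Id + O(\epsilon))$ and using that $\wed^j Dh_{Bt}(z)$ is bounded uniformly in $z\in\B$ and $t\in[0,1]^{d-1}$, this ratio equals $\langle\wed^j Dh_{Bt}(z)\,\hat w, E_J\rangle / \langle\hat w, E_J\rangle$ up to an additive error of size $O(\epsilon\,\|\hat w\|/|\langle\hat w, E_J\rangle|) = O(\epsilon)$. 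Finally \eqref{e.minor_cone}, applied to the cone vectors $w_1,\dots,w_j$, bounds $\log$ of the absolute value of the last ratio within $\nu$ of $\log\det\nolimits_j Dh_{Bt}(z)$; since $\det\nolimits_j Dh_{Bt}(z)$ is bounded and bounded away from $0$, the additive $O(\epsilon)$ above may be absorbed as an additive error in the logarithm. Combining this with the two displays above produces \eqref{e.key_estimate}.

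The main obstacle I anticipate is the accounting of the $O(\epsilon)$ terms after conjugating by the near-identity maps $A$ and $B$: the vectors $w_l$ spanning $E_{1,t}(x)\oplus\dots\oplus E_{j,t}(x)$ need not be close to orthonormal, so $\|\hat w\|$ need not be bounded below, and one must invoke the cone condition \eqref{e.recover} to control $\|\hat w\|/|\langle\hat w, E_J\rangle|$ — otherwise the perturbation of $\hat w$ by $B$ (and of the covector $E_J$ by $A^{\ast}$) would only give a relative error $O(\epsilon/|\langle\hat w, E_J\rangle|)$ rather than $O(\epsilon)$. Everything else is routine, since the model deformation $h_{Bt}$, its compound matrices, and the principal minors $\det\nolimits_j Dh_{Bt}$ are all uniformly bounded and, in the case of the minors, uniformly bounded away from $0$ for $t$ in the unit cube.
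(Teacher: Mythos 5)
Your proposal is correct and follows essentially the same route as the paper: split $\log\psi_{j,t}$ along $Df_t = Df\circ Dg_t$, treat the $f$-factor with \eqref{e.train2} and \eqref{e.small_ball_chi}, conjugate the $g_t$-factor into the model chart using \eqref{e.small_ball_Phi}, control the denominators via the cone condition \eqref{e.recover}, and finish with \eqref{e.minor_cone}. The only superficial difference is that the paper first writes out the $j=1$ case and then remarks that the general case follows ``taking exterior powers,'' whereas you carry out the exterior-power computation directly; the subtle point you flag at the end---that the cone condition is what bounds $\|\hat w\|/|\langle\hat w,E_J\rangle|$---is exactly the one the paper singles out in its final paragraph.
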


Here and in what follows, $O(\epsilon)$ stands for anything whose absolute value is bounded by $\epsilon$ times something depending on the numbers fixed at the beginning of the proof and on the model deformation. 

\begin{proof}
Consider the case $j=1$.
Since $f_t = f \circ g_t$, we have:
\begin{align*}
\log \psi_{1,t}(x) &= 
\log \left|
\frac{\biangle{ Df(g_t(x)) Dg_t(x) v_{1,t}(x)  \, ,  v_1(f(g_t(x)))}}
{\biangle{ Dg_t(x) v_{1,t}(x) \, ,  v_1(g_t(x))}}
\right|  \\
& \hspace{.3\textwidth}
+ \log \left|
\frac{\biangle{ Dg_t(x) v_{1,t}(x) ,  v_1(g_t(x))}}
{\biangle{v_{1,t}(x) , v_1(x)}} 
\right| \\
&\eqcolon \circled{1} + \circled{2}.
\end{align*}
The first term is easy to estimate, using \eqref{e.train1} and \eqref{e.small_ball_chi}:
\begin{equation} \label{e.circled1}
\circled{1} = \chi_1(g_t(x)) = \chi_1(x) + O(\epsilon)
\end{equation}

Consider $x$ as fixed and denote $e_{1,t} \coloneqq \cL_x^{-1} v_{1,t}(x)$.
Then:
\[
\circled{3} \coloneqq
\biangle{v_{1,t}(x) , v_1(x)} = \biangle{\cL_x e_{1,t} , \cL_x e_1} = \langle e_{1,t}, e_1 \rangle,
\]
since $\cL_x$ sends the Euclidean metric on $\R^d$ to the Lyapunov metric at $T_x M$.
Recall from \eqref{e.recover} that $v_{1,t}(x) \in \cH_1(x,\gamma)$, that is, $e_{1,t} \in \cH_1(\gamma)$.
So the inner product $\circled{3}$ is always positive (by continuity) and actually satisfies the bound: 
\begin{equation}\label{e.3_lower_bound}
\circled{3} 
\ge (1+\gamma^2)^{-1/2}\|e_{1,t}\| \, .
\end{equation}
Recall that $z \coloneqq \Phi_i^{-1}(x)$.
By \eqref{e.minor_cone}, $\langle  Dh_{Bt}(z) e_{1,t} , e_1 \rangle$ is positive and 
\begin{equation}\label{e.train3}
\left| \log \frac{\langle  Dh_{Bt}(z)  e_{1,t} , e_1 \rangle}{\langle e_{1,t} , e_1 \rangle}  - \log \det\nolimits_1 Dh_{Bt}(z) \right| < \nu.
\end{equation}
Next, consider the quantity
\begin{multline*}
\circled{4} \coloneqq
\biangle{ Dg_t(x) v_{1,t}(x) ,  v_1(g_t(x))} \\ =  \biangle{ Dg_t(x) \cL_x e_{1,t} ,  \cL_{g_t(x)} e_1 } 
 = \langle \cL_{g_t(x)}^{-1} Dg_t(x) \cL_x e_{1,t} , e_1\rangle \, .
\end{multline*}
Applying the chain rule,
\begin{alignat*}{2}
Dg_t(x)
&= D\Psi_i(h_{Bt}(z)) \, Dh_{Bt}(z) \, D\Psi_i^{-1}(x) &\qquad&\text{(by \eqref{e.def_g})} \\
&= D\Phi_{x_i}(\Phi_{x_i}^{-1}(g_t(x))) \, Dh_{Bt}(z) \, D\Phi_{x_i}^{-1}(x)  &\qquad&\text{(by \eqref{e.def_Psi}).} 
\end{alignat*}
Therefore,
\begin{align*}
\circled{4} 
&= 
\Big\langle \big[ D\Phi_{x_i}^{-1}(g_t(x)) \cL_{g_t(x)}\big]^{-1} \, Dh_{Bt}(z) \, \big[D\Phi_{x_i}^{-1}(x) \cL_x\big]  e_{1,t}  \, , e_1 \Big\rangle  \\
&\eqcolon 
\big\langle  (\Id+P)^{-1} \, Dh_{Bt}(z) \, (\Id+Q)  e_{1,t}  \, , e_1 \big\rangle  \, ,
\end{align*}
where, by \eqref{e.small_ball_Phi}, the linear maps $P, Q \colon \R^d \to \R^d$ satisfy $\|P\|$, $\|Q\| < \epsilon$.
So
\begin{equation}\label{e.circled4}
\circled{4}  = \langle  Dh_{Bt}(z)  e_{1,t} , e_1 \rangle + O(\epsilon\|e_{1,t}\|) \, ,
\end{equation}
and in particular $\circled{4}$ is positive.
Now we can estimate:
\begin{alignat*}{2}
\circled{2} 
&= \log \left( \circled{4} / \circled{3} \right)
&\quad&\text{(by definition)} 
\\
&= \log \left( \frac{\langle  Dh_{Bt}(z)  e_{1,t} , e_1 \rangle}{\langle e_{1,t} , e_1 \rangle} + O(\epsilon) \right)
&\quad&\text{(using \eqref{e.3_lower_bound} and \eqref{e.circled4})} \\ 
&= \log \left( \frac{\langle  Dh_{Bt}(z)  e_{1,t} , e_1 \rangle}{\langle e_{1,t} , e_1 \rangle}  \right) + O(\epsilon) 
&\quad&\text{(using \eqref{e.train3}).} 
\end{alignat*}
Combining this with \eqref{e.train3} and \eqref{e.circled1}, we obtain the desired estimate \eqref{e.key_estimate} for the case $j=1$.

The proof for $j\ge 2$ follows exactly the same pattern, taking exterior powers, of course.
The only point that deserves notice is that estimate \eqref{e.3_lower_bound} should be replaced by the following:
if 
$e_{j,t} \coloneqq \cL_x^{-1} v_{j,t}(x)$ then
\[
|\langle e_{1,t} \wedge \dots \wedge e_{j,t}, e_1 \wedge \dots \wedge e_j \rangle| \ge (1+\gamma^2)^{-j/2}\| e_{1,t} \wedge \dots \wedge e_{j,t} \| \, .
\]
Indeed, the orthogonal projection onto the space spanned by $e_1$, \dots, $e_j$ cannot contract a vector in the cone $\cH_j(\gamma)$ by a factor smaller than $(1+\gamma^2)^{-1/2}$, and hence it cannot contract the volume of a $j$-dimensional parallelepipid in the cone by a factor smaller than $(1+\gamma^2)^{-j/2}$.
\end{proof}

We obtain from \eqref{e.key_estimate} and \eqref{e.int_log_minor} that for each Lyapunov ball $\cB_i$,
\[
\left|
\int_{\cB_i} \log \psi_{j,t} \dd m 
- \int_{\cB_i} (\chi_1 + \dots + \chi_j) \dd m 
+ {m(\cB_i)} {\mathcal Q}(b_j t_j)
\right|
\le {m(\cB_i)} \nu + O(\epsilon) \, .
\]
This together with \cref{l.no_change,l.formula_exponent} yields:
\begin{equation}\label{e.upshot}
\big| \hat\lambda_j(f_t) - \hat\lambda_j(f) +  m(Z_4) {\mathcal Q}(b_j t_j) \big| \le m(Z_4) \nu + O(\epsilon)  \, .
\end{equation}
Since $\nu \le \delta_0 a_j / 2$ 
and $1/N - m(Z_4) = O(\epsilon)$, we have:
\[
\left| \hat\lambda_j(f_t) - \hat\lambda_j(f) + \frac{{\mathcal Q}(b_j t_j)}{N} \right| \le \frac{\delta_0 a_j}{2N} + O(\epsilon)  \, .
\]
By the definition \eqref{e.def_bj} of $b_j$ we have:
\[
{\mathcal Q}(0) = 0 \le {\mathcal Q}(b_j t_j) \le 3 \delta_0 a_j = {\mathcal Q}(b_j) \, .
\]
Combining these two pieces of information, and recalling that $\delta = \delta_0/N$, we obtain:
\begin{align*}
\hat\lambda_j(f_t) - \hat\lambda_j(f) 
&\ge 
\begin{cases} 
-3.5 \, \delta a_j - O(\epsilon) &\text{for all $t$;} \\
-0.5 \, \delta a_j - O(\epsilon) &\text{if $t_j=0$;}
\end{cases}
\\
\hat\lambda_j(f_t) - \hat\lambda_j(f) 
&\le 
\begin{cases} 
\phantom{+}0.5 \, \delta a_j + O(\epsilon) &\text{for all $t$;} \\
         - 2.5 \, \delta a_j + O(\epsilon) &\text{if $t_j=1$.}
\end{cases}
\end{align*}
So a sufficiently small choice of $\epsilon$ ensures that inequalities \eqref{e.box_bounds}--\eqref{e.box_bottom} are satisfied.
\Cref{p.central} and, therefore, \cref{t.majorize} are proved.
\end{proof}

\section{Proof of additional results for tori}\label{s.torus}

In this \lcnamecref{s.torus} we prove two supplements to \cref{t.majorize}, namely \cref{c.easy,t.T3}.

\subsection{Anosov diffeomorphisms display all hyperbolic simple Lyapunov spectra}\label{ss.easy_proof}

In order to deduce \cref{c.easy} from \cref{t.majorize}, we need the following fact:

\begin{lemma}\label{l.number_theory}
Given integers $d>u>0$, there exists an Anosov linear automorphism of $\T^d$ with unstable index $u$ and simple Lyapunov spectrum.
\end{lemma}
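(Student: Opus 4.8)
The plan is to realize $L$ as the companion matrix of the minimal polynomial of a carefully chosen algebraic unit. If $\epsilon$ is an algebraic integer of degree $d$ whose $d$ conjugates are \emph{real} with \emph{pairwise distinct absolute values}, exactly $u$ of which exceed $1$, and if moreover $\epsilon$ is a unit, then the companion matrix $L$ of its minimal polynomial $p$ lies in $\GL(d,\Z)$ (it is monic integral with constant term $\pm1$), its eigenvalues are precisely the conjugates of $\epsilon$, so $F_L$ is Anosov, has simple Lyapunov spectrum (the numbers $\log|\text{eigenvalue}|$ are distinct), and unstable index $u$. So it suffices to produce such a unit $\epsilon$.

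First fix a totally real number field $K$ of degree $d$ over $\Q$; such fields exist for every $d$ (classical). Let $\sigma_1,\dots,\sigma_d\colon K\hookrightarrow\R$ be its embeddings and $\mathcal{O}_K^\times$ its unit group. By Dirichlet's unit theorem, the map $\epsilon\mapsto(\log|\sigma_1(\epsilon)|,\dots,\log|\sigma_d(\epsilon)|)$ sends $\mathcal{O}_K^\times$ onto a lattice $\Lambda$ of full rank $d-1$ inside the hyperplane $V\coloneqq\{x\in\R^d\st\sum_i x_i=0\}$.

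Now comes the only real point: choosing $\epsilon$ so as to prescribe $u$ while keeping all $|\sigma_i(\epsilon)|$ distinct. Consider
\[
C\coloneqq\big\{x\in V\st \text{exactly } u \text{ of the } x_i \text{ are positive and the other } d-u \text{ are negative}\big\}\setminus\bigcup_{i\ne j}\{x\st x_i=x_j\}.
\]
This is an open cone in $V$: it is scale-invariant, and it is open because on $\{x\in V\st x_i\ne 0\ \forall i\}$ the number of positive coordinates is locally constant while the removed hyperplanes are closed. It is nonempty, since it contains all small perturbations within $V$ of $(d-u,\dots,d-u,-u,\dots,-u)$ having pairwise distinct coordinates. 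Finally, any full-rank lattice meets every nonempty open cone: pick an open ball inside the cone and dilate it (staying inside the cone) until its radius exceeds the covering radius of the lattice. Hence $\Lambda\cap C\ne\emptyset$, i.e.\ there is a unit $\epsilon$ whose conjugates $\sigma_1(\epsilon),\dots,\sigma_d(\epsilon)$ are real, have pairwise distinct absolute values, and exactly $u$ of which exceed $1$; none equals $1$, so $F_L$ is genuinely hyperbolic. Distinct absolute values force the conjugates to be distinct, so $\epsilon$ has degree $d$ and $p$ has simple roots, completing the construction.

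The delicate step is thus the separation of the two requirements — the combinatorial datum $u$ and the genericity condition ``all $|\sigma_i(\epsilon)|$ distinct'' — which the lattice-in-a-cone observation handles cleanly; the only nontrivial inputs are the existence of totally real fields of every degree and Dirichlet's unit theorem. (For small $d$ one can bypass number theory with explicit matrices, e.g.\ companion matrices of $x^2-nx-1$, $x^3-3x^2+1$, $x^3-3x-1$, and take block sums, but such block constructions do not by themselves reach every pair $(d,u)$.)
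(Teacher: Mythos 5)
Your argument is correct, and it is precisely the route the paper \emph{mentions and deliberately avoids}: just before their construction the authors remark that ``the existence of such polynomials can be quickly deduced from Dirichlet's unit theorem, but we will provide a completely elementary proof.'' You flesh out the Dirichlet route cleanly: take a totally real field $K$ of degree $d$, use the fact that the log-embedding image of $\mathcal{O}_K^\times$ is a full-rank lattice $\Lambda$ in the trace-zero hyperplane, and intersect $\Lambda$ with the open cone of vectors having exactly $u$ positive entries and pairwise-distinct entries; the lattice-meets-cone step is handled correctly by dilation past the covering radius, the nonemptiness of the cone is checked, and the observation that distinct absolute values among the $\sigma_i(\epsilon)$ force $\epsilon$ to be a primitive element of degree $d$ closes the loop. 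The paper instead constructs an explicit monic polynomial $P(x)=\sum_{i=0}^d(-1)^i b^{\hat a_i}x^{d-i}$ with constant term $\pm1$ (inspired by Vijayaraghavan's construction of Pisot numbers of every degree) and locates its roots by a sign-change count on the points $b^n$, using only the intermediate value theorem. The trade-off is the usual one: your proof is shorter and conceptually transparent but imports two classical inputs (existence of totally real fields of every degree, Dirichlet's unit theorem), whereas the paper's proof is self-contained and produces a completely explicit matrix $L$ (via the companion matrix of $P$), at the cost of a short but slightly fiddly estimate showing one term of the sum dominates the rest.

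One small thing worth making explicit if you keep your version: the cone $C$ as written is a \emph{union} of $\binom{d}{u}$ sign-pattern chambers; that is harmless (the lattice meets the union, hence some chamber), and it is exactly what you want since the unstable index only counts how many $|\sigma_i(\epsilon)|$ exceed $1$, not which ones. Also, your parenthetical about block sums is a reasonable caution, but note that block sums can fail simplicity of the spectrum even for $(d,u)$ they do reach (the blocks' Lyapunov exponents can collide), so the number-theoretic or polynomial construction really is doing work.
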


\begin{proof}
We need to show the existence of a polynomial $P(x)$ with integer coefficients, leading term $x^d$, constant term $\pm 1$, whose roots are all real and simple, being $u$ of them with modulus bigger than~$1$ and $d-u$ of them of modulus smaller than~$1$. Then the companion matrix of $P(x)$ will be an element $L \in \GL(d,\Z)$ that induces an Anosov linear automorphism $F_L \colon \T^d \to \T^d$ with unstable index~$u$ and simple Lyapunov spectrum.
Though the existence of such polynomials can be quickly deduced from Dirichlet's unit theorem, we will provide a completely elementary proof. The idea comes from a proof of existence of Pisot--Vijayaraghavan numbers of arbitrary degree \cite[Theorem~1]{Vija}.

Fix integers 
\[
a_1>\cdots>a_u>0>a_{u+1}>\cdots>a_d
\]
whose sum is $0$ and such that $a_i-a_{i+1}\ge 2$ for each $i\in\{1,\dots,d-1\}$. 
Let $b \ge 3$ be another integer, and consider the polynomial
\[
P(x) \coloneq \sum_{i=0}^d (-1)^i b^{\hat{a}_i} x^{d-i} 
\quad\text{where}\quad 
\hat{a}_i \coloneq a_1+\cdots+a_i, \quad \hat{a}_0 \coloneqq 0.
\]
Note that $\hat{a}_i \ge 0$ for each $i$ and so $P$ has integer coefficients. 
Furthermore, $P$ is monic and has constant term $(-1)^d$. 

Let us locate the roots of $P$.
We claim that, for all $n \in \Z \setminus \{a_1,\dots,a_d\}$,
\begin{equation}\label{e.sign_claim}
P(b^n) \text{ is non-zero and has the same sign as } \prod_{j=1}^d (n-a_j) \, ,
\end{equation}
and so, by the intermediate value theorem, $P$ has $d-u$ simple roots on the interval $(0,1)$ and $u$ simple roots on the interval $(1,+\infty)$.
In order to prove the claim, fix $n$ and consider the expression:
\begin{equation}\label{e.tricky_sum}
P(b^n) = \sum_{i=0}^d (-1)^i b^{\hat{a}_i + n(d-i)} \, .
\end{equation}
The function $i \in \{0,\dots,d\} \mapsto \hat{a}_i + n(d-i)$ is integer-valued, strictly concave, and attains a maximum at $i=k$, where $k$ is the number of negative factors in the product $\prod_{j=1}^d (n-a_j)$. Using that $2\sum_{j=1}^\infty b^{-j} \le 1$, we see that the term corresponding to $i=k$ in the right-hand side of \eqref{e.tricky_sum} is bigger in absolute value than the sum of all other terms. So the sign of $P(b^n)$ is $(-1)^k$, thus proving~\eqref{e.sign_claim}.
\end{proof}

\begin{proof}[Proof of \cref{c.easy}]
Consider nonzero numbers $\xi_{1} > \dots > \xi_{d}$ whose sum is equal to $0$, and let $\boldsymbol{\xi} \coloneqq (\xi_1,\dots,\xi_d)$.
By \cref{l.number_theory}, there exists an Anosov linear automorphism $F_L \colon \T^d \to \T^d$ 
whose Lyapunov spectrum $\boldsymbol{\lambda}(L)$ is simple and has the same unstable index as $\boldsymbol{\xi}$.
If $n$ is sufficiently large then $\boldsymbol{\lambda}(L^n) = n \boldsymbol{\lambda}(L)$ strictly majorizes~$\boldsymbol{\xi}$.
Hence by \cref{t.majorize} there exists a conservative Anosov $C^\infty$ diffeomorphism $f \colon \T^d \to \T^d$ homotopic to $F_{L^n}$ with simple dominated splitting and such that $\boldsymbol{\lambda}(f) = \boldsymbol{\xi}$, as we wanted to prove.
\end{proof}

\subsection{Spectra of Anosov diffeomorphisms with simple dominated splitting on $\T^3$}\label{ss.fill_proof}

In this \lcnamecref{ss.fill_proof} we prove \cref{t.T3}.
Fix a hyperbolic matrix $L \in \GL(3,\Z)$ whose eigenvalues are all real and simple.
So the induced automorphism $F_L \colon \T^3 \to \T^3$ is Anosov and its Lyapunov spectrum is simple.
Let $u \in \{1,2\}$ be its unstable index.

\begin{proof}[Proof of the ``only if'' part of \cref{t.T3}]
Let $f\in \Diff_m^\infty(\T^3)$ be an Anosov diffeomorphism homotopic to the automorphism $F_L$, and admitting simple dominated splitting. Since $f$ and $F_L$ are topologically conjugate, 
they have the same unstable index $u$. 
Taking inverses if necessary, we can assume that $u=2$.
So the Lyapunov spectrum $\boldsymbol{\lambda}(f) = (\lambda_1(f),\lambda_2(f),\lambda_3(f))$ satisfies $\lambda_1(f)>\lambda_2(f)>0>\lambda_3(f)$. 
Let us show that  $\boldsymbol{\lambda}(f)$ is majorized by $\boldsymbol{\lambda}(L)$. 
As explained before, 
the inequality $\lambda_1(f)+\lambda_2(f) \le \lambda_1(L)+\lambda_2(L)$ is immediate: see \eqref{e.h_condition_proof}.
Therefore, we need to show that $\lambda_1(f) \le \lambda_1(L)$.

Let $\tilde{f}$ be a lift of $f$ to the universal covering $\R^3$. 
Since $f$ is homotopic to~$F_L$, we have $\tilde{f} = L + \phi$ for some $\Z^3$-periodic map $\phi \colon \R^3 \to \R^3$. 
So, for every $n \ge 0$, 
\[
\tilde{f}^n = L^n + \sum_{k=0}^{n-1} L^k \circ \phi \circ \tilde{f}^{n-1-k} \, .
\]
Since $\phi$ is bounded, it follows that there is a constant $C_1>0$ (independent of $n$) such that, for all $x$, $y \in \R^d$ with $\|x - y\| \le 1$ we have:
\[
\| \tilde{f}^n(x) - \tilde{f}^n(y) \| \le C_1 \sum_{k=0}^n \| L^k \| 
\]
Since the top eigenvalue of the linear map $L$ is simple, there is another constant $C_2>0$ such that $\| L^k \| \le C_2 e^{k \lambda_1(L)}$ for all $k \ge 0$.
In particular,
\[
\| \tilde{f}^n(x) - \tilde{f}^n(y) \| \le C_3 e^{n \lambda_1(L)} \, ,
\]
where $C_3>0$ is another constant.

By \cite[Theorem~1.3]{BBI} 
(see also \cite[Corollary 7.7]{Potrie}), the strong unstable foliation in the universal covering is quasi-isometric; this means that there is a constant $C_4>0$ such that if $I \subset \R^3$ is a segment of strong unstable manifold then its length, denoted by $\len(I)$, can be bounded as:
\[
\len(I) \le C_4\diam(I)+C_4.
\]
Hence, for every such a segment with $\diam(I) \le 1$, and every $n\geq 0$, 
\[
\len(\tilde{f}^n(I))\leq C_4 C_3 e^{n\lambda_1(L)} + C_4 \, .
\]
If follows from the next \lcnamecref{l.length_growth} that $\lambda_1(f) \le \lambda_1(L)$, as we wanted to prove. 
\end{proof}

\begin{lemma}\label{l.length_growth}
For each $x\in \T^3$, let $W_x \subset \T^3$ be the segment of strong unstable leaf of length $1$ for which $x$ is a midpoint. 
Then, for $m$-almost every $x\in \T^3$, 
\[
\limsup_{n\to\infty} \frac{\log\len(f^n(W_x))}{n} \ge \lambda_1(f) \, .
\]
\end{lemma}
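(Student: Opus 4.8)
The plan is to bound $\len(f^n(W_x))$ from below by the exponential of a Birkhoff average of the expansion function along $E_1$, and then use absolute continuity of the strong unstable foliation to pass from an almost-everywhere statement along leaves to one for the base point.

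Concretely, fix a smooth Riemannian metric on $\T^3$ and set $\phi(x) \coloneqq \log \|Df(x)|_{E_1(x)}\|$, which is continuous and bounded, with $\int_{\T^3}\phi\,dm = \lambda_1(f)$ by \eqref{e.exp_integrals}. Since $W_x$ is tangent to $E_1$ and has unit length, denoting by $\ell$ the arc-length probability measure on $W_x$ and writing $S_n\phi \coloneqq \sum_{k=0}^{n-1}\phi\circ f^k$, one has $\len(f^n(W_x)) = \int_{W_x} \exp\!\big(S_n\phi(w)\big)\,d\ell(w)$. By Jensen's inequality (convexity of $\exp$, applied to the probability measure $\ell$), this is at least $\exp\!\big(\int_{W_x}S_n\phi\,d\ell\big)$, so that $\tfrac1n\log\len(f^n(W_x)) \ge \tfrac1n\int_{W_x}S_n\phi\,d\ell$. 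It therefore suffices to show that the right-hand side tends to $\lambda_1(f)$ for $m$-almost every $x$.

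Since $m$ is ergodic for $f$ (Anosov--Sinai), Birkhoff's theorem gives that the set $G \coloneqq \{w : \tfrac1n S_n\phi(w)\to\lambda_1(f)\}$ has full $m$-measure. The one-dimensional strong unstable foliation (tangent to $E_1$) is the strong unstable foliation of the partially hyperbolic splitting $E^{\mathrm u} = E_1$, $E^{\mathrm c} = E_2\oplus\cdots\oplus E_u$, $E^{\mathrm s} = E_{u+1}\oplus\cdots\oplus E_d$, hence is absolutely continuous (see e.g.\ \cite{BDV}); consequently the conditional measures of $m$ along its leaves are equivalent to arc length, and the Fubini property for the foliation gives that for $m$-a.e.\ $x$, $\ell$-almost every point of $W_x$ lies in $G$. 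For such $x$, since $|\tfrac1n S_n\phi| \le \|\phi\|_\infty$ uniformly, dominated convergence yields $\tfrac1n\int_{W_x}S_n\phi\,d\ell \to \int_{W_x}\lambda_1(f)\,d\ell = \lambda_1(f)$. Combined with the Jensen bound this gives $\liminf_{n\to\infty}\tfrac1n\log\len(f^n(W_x)) \ge \lambda_1(f)$ for $m$-a.e.\ $x$, which is stronger than the stated $\limsup$ inequality.

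The only step that is not entirely routine is the appeal to absolute continuity of the strong unstable foliation and the attendant Fubini property; everything else is bookkeeping. Minor technical points — measurability of $x\mapsto W_x$, possible non-orientability of $E_1$ (harmless, since only unoriented arc length enters), and the exclusion of closed strong unstable leaves of length at most $1$ (there are none, by the quasi-isometry property of \cite{BBI} invoked above) — can be dispatched quickly.
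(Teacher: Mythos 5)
Your proof is correct and follows essentially the same route as the paper's: compute $\len(f^n(W_x))$ as an integral of $\|Df^n|_{E_1}\|$ over $W_x$, apply Jensen's inequality, use absolute continuity of the strong unstable foliation to get that $\ell$-a.e.\ point of $W_x$ is Lyapunov-regular for $m$-a.e.\ $x$, and finish with bounded convergence. The only cosmetic difference is that you package $\log\|Df^n|_{E_1}\|$ as the Birkhoff sum $S_n\phi$, whereas the paper works directly with $\|Df^n|_{E_1(y)}\|$; these are the same object by the cocycle identity on the one-dimensional bundle $E_1$.
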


(See \cite{SaghinXia} for related results.)

\begin{proof}
Let $\ell$ denote $1$-dimensional Hausdorff measure (i.e., length) on $\T^3$. 
For every $x\in \T^3$ we have:
\[
\ell(f^n(W_x)) = \int_{W_x} \| Df^n |_{E_1(y)} \| \dd \ell(y) \, .
\]
Since $\ell(W_x) = 1$, Jensen's inequality yields:
\[
\log \ell(f^n(W_x)) \ge \int_{W_x} \log \| Df^n |_{E_1(y)} \| \dd \ell(y) \, .
\]
Let $R$ be the set of points $y\in\T^3$ for which $\lim_{n\to \infty}\frac{1}{n} \log \| Df^n |_{E_1(y)} \| = \lambda_1(f)$; then $m(R)=1$.
By absolute continuity of the strong unstable foliation \cite[Lemma~10]{PesinSinai},
for $m$-almost every $x\in \T^3$ we have $\ell(W_x\cap R) = 1$.
Therefore,
\[
\frac{\log \ell(f^n(W_x))}{n} \ge
\int_{W_x \cap R} \frac{\log \| Df^n |_{E_1(y)} \|}{n} \dd \ell(y) \xrightarrow[n\to\infty]{} \lambda_1(f) \, . \qedhere
\]
\end{proof}

\begin{proof}[Proof of the ``if'' part of \cref{t.T3}]
Now we fix a vector $\boldsymbol{\xi} = (\xi_1,\xi_2,\xi_3)$ such that $\xi_1 > \xi_2 >\xi_3$, $\xi_u>0>\xi_{u+1}$, and $\boldsymbol{\xi} \preccurlyeq \boldsymbol{\lambda}(L)$. We want to find a smooth conservative Anosov diffeomorphism $f$ homotopic to $F_L$, admitting a simple dominated splitting, and with spectrum $\boldsymbol{\lambda}(f) = \boldsymbol{\xi}$.
If $\boldsymbol{\xi}$ is strictly majorized by $\boldsymbol{\lambda}(L)$ then the existence of $f$ is guaranteed by \cref{t.majorize}.
So let us assume that majorization is not strict, that is,
either $\xi_1=\lambda_1(L)$ or $\xi_1+\xi_2=\lambda_1(L)+\lambda_2(L)$.
We can assume that only one of these equalities is satisfied, since otherwise we can simply take $f=F_L$.

Let $E^L_1$, $E^L_2$, $E^L_3$ denote the eigenspaces of $L$ corresponding to the Lyapunov exponents $\lambda_1(L)$, $\lambda_2(L)$, $\lambda_3(L)$, respectively.
In case $\xi_1=\lambda_1(L)$ we shall perform the deformation in \cref{t.majorize} in such a way that the foliation $\cF^{23}$ parallel to $E^L_2\oplus E^L_3$ is preserved, while in the case $\xi_1+\xi_2=\lambda_1(L)+\lambda_2(L)$  (or equivalently $\lambda_3(f)=\lambda_3(L)$) we shall do it in such a way that foliation $\cF^{12}$ parallel to $E^L_1\oplus E^L_2$ is preserved. 
Since both cases are dealt with similarly, we will concentrate ourselves on the second case, namely $\xi_3 = \lambda_3(L)$.

The following observation will make the argument simpler:

\begin{lemma}\label{l.rig}
Let $f$ be a conservative Anosov diffeomorphism of $\T^3$ homotopic to $F_L$.
If $f$ preserves the foliation $\cF^{12}$ then $\lambda_3(f)=\lambda_3(L)$.\footnote{Actually, $\lambda_{3,\mu}(f)=\lambda_3(L)$ for any $f$-invariant probability measure $\mu$, by the same proof.}
\end{lemma}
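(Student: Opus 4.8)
The plan is to pass to the universal cover and show that the line-bundle cocycle induced by $Df$ transverse to $\cF^{12}$ is \emph{exactly} constant, equal to the eigenvalue of $L$ transverse to $\cF^{12}$; the value $\lambda_3(f)$ can then be read off. Write $P \coloneqq E^L_1 \oplus E^L_2$, so $\cF^{12}$ is the linear foliation with tangent distribution $P$, and let $\mu_3$ be the eigenvalue of $L$ with eigenvector spanning $E^L_3$; since $L$ is hyperbolic with real simple eigenvalues one has $|\mu_3| = e^{\lambda_3(L)} < 1$, and $L$ induces multiplication by $\mu_3$ on the line $\R^3/P$. First I would note that $P$ cannot be a rational subspace: otherwise $P\cap\Z^3$ would be a rank-two sublattice preserved by $L$, forcing $\mu_3$ to be a nonzero integer of absolute value $<1$, which is impossible. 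Hence the image $\pi(\Z^3)$ of the lattice under the linear projection $\pi\colon \R^3 \to \R^3/P \cong \R$ is a \emph{dense} subgroup of $\R$.

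Next I would lift $f$ to $\tilde f = L + \phi$ on $\R^3$ with $\phi$ a $\Z^3$-periodic (hence bounded) map; that $f$ preserves $\cF^{12}$ means exactly that $\tilde f$ sends each plane parallel to $P$ to another such plane. From $\pi(\tilde f(x)) = \mu_3\pi(x) + \pi(\phi(x))$ together with this invariance, the quantity $\pi(\phi(x))$ depends only on $s \coloneqq \pi(x)$, defining a continuous bounded function $\psi$ with $\pi(\tilde f(x)) = \mu_3\pi(x) + \psi(\pi(x))$. The relation $\tilde f(x+k) = \tilde f(x) + Lk$ for $k\in\Z^3$ then gives $\psi(s+\gamma)=\psi(s)$ for all $\gamma\in\pi(\Z^3)$; density of $\pi(\Z^3)$ and continuity of $\psi$ force $\psi$ to be constant. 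Thus $\pi\circ\tilde f$ is affine with linear part $\mu_3$, and differentiating, $\pi\circ D\tilde f(x) = \mu_3\pi$ for every $x$. Descending to $\T^3$, this says $Df$ acts on the canonically trivialized line bundle $T\T^3/P$ by the constant $\mu_3$; equivalently $\|D\tilde f^{n}(x)v\| \ge c\,|\mu_3|^{n}\,|\pi(v)|$ for all $v\notin P$ and $n\ge 0$, and symmetrically for $\tilde f^{-1}$ (whose transverse factor is $\mu_3^{-1}$).

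It remains to identify this constant with $\lambda_3(f)$. Since $P$ is a $Df$-invariant plane field, at $m$-almost every Oseledets-regular point $x$ it is a sum of two Oseledets lines; by ergodicity of $m$ the index pair $S$ is a.e.\ constant, and the exponent of $Df$ on $T\T^3/P = \bigoplus_{i\notin S}E_i(f)$ equals $\log|\mu_3| = \lambda_3(L)$. If $1\notin S$ then $\lambda_1(f)=\lambda_3(L)<0$, contradicting $\lambda_1(f)\ge 0$. If $2\notin S$ (with the three exponents of $f$ distinct) then $\lambda_2(f)=\lambda_3(L)<0$, so $f$ — being topologically conjugate to $F_L$ — has unstable index $1$; then Pesin's formula and the variational principle give $\lambda_1(f) = h_m(f) \le h_\mathrm{top}(f) = h_\mathrm{top}(F_L) = \lambda_1(L)$, and combining with $\sum_i\lambda_i(f)=0=\sum_i\lambda_i(L)$ and $\lambda_2(f)=\lambda_3(L)$ yields $\lambda_3(f) = \lambda_1(L)+\lambda_2(L)-\lambda_1(f) \ge \lambda_2(L) > \lambda_3(L) = \lambda_2(f) \ge \lambda_3(f)$, a contradiction (the degenerate case $\lambda_2(f)=\lambda_3(f)$ forces the quotient exponent to be $\lambda_3(f)$ outright). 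Hence $3\notin S$, so $\lambda_3(f) = \lambda_3(L)$, as claimed; the same argument with $h_m$ replaced by $h_\mu$ and Ruelle's inequality in place of Pesin's formula covers the statement in the footnote, at least once one also records (which the dominated splitting of the relevant $f$ provides) that $P = E_1(f)\oplus E_2(f)$.

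The step I expect to be the main obstacle is getting the transverse cocycle to be \emph{exactly} constant rather than merely cohomologous to one — this is precisely what the density of $\pi(\Z^3)$ (i.e.\ irrationality of $P$) and the factorization of $\pi\circ\phi$ through $\pi$ are designed to supply. The final identification of the quotient exponent with the bottom exponent $\lambda_3(f)$ is the only other place where hyperbolicity enters, through the entropy inequalities.
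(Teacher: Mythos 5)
You follow the same route as the paper: lift $f$ to $\tilde f = L + \phi$ on $\R^3$, use invariance of $\cF^{12}$ together with $\Z^3$-periodicity of $\phi$ and irrationality of $P = E^L_1\oplus E^L_2$ to force the transverse factor of $D\tilde f$ to be the constant $\pm e^{\lambda_3(L)}$, and then read off the bottom exponent. The paper carries out the invariance computation via the projection $P_3$ onto $E^L_3$ and a block-triangular form for $Df$; you work in the quotient $\R^3/P$ and factor $\pi\circ\phi$ through $\pi$ — the same computation in different notation, and your irrationality argument (determinant of $L|_{P\cap\Z^3}$ cannot be $\pm 1$) matches the reason behind the paper's density assertion.

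Where you genuinely add content is the final identification. The paper passes from ``$\det Df|_{P}$ is the constant $e^{-\lambda_3(L)}$'' to ``$\lambda_1(f)+\lambda_2(f)=-\lambda_3(L)$, hence $\lambda_3(f)=\lambda_3(L)$'' in one line, which tacitly assumes the two Oseledets lines inside $P$ carry the two largest exponents. You make this a real argument: the quotient exponent is $\lambda_3(L)<0$, so it is not $\lambda_1(f)$; if it were $\lambda_2(f)$, then $\lambda_2(f)<0$ forces $f$ to have unstable index $1$, and the Pesin/variational bound $\lambda_1(f)\le\lambda_1(L)$ with the zero-sum constraint yields $\lambda_3(f)\ge\lambda_2(L)>\lambda_3(L)=\lambda_2(f)$, contradicting the ordering. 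When $u(L)=2$ the conclusion is immediate from the index (which is what the paper's one-liner implicitly relies on), but for $u(L)=1$ your extra step is needed, so this is a worthwhile elaboration rather than a detour. One small caveat: your aside about the footnote (arbitrary invariant $\mu$) uses ergodicity to make the index set $S$ a.e.\ constant, which is not available for a general $\mu$; but that is orthogonal to the lemma as stated.
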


\begin{proof} 
Let $\tilde{f}$ be a lift of $f$ to the universal covering $\R^3$. 
Since $f$ is homotopic to~$F_L$, we have $\tilde{f} = L + \phi$ for some $\Z^3$-periodic map $\phi$.
Let $P_1$, $P_2$, $P_3$ be the projections associated to the splitting $\R^3=E_1^L\oplus E_2^L\oplus E_3^L$. 
The fact that $f$ preserves the foliation $\cF^{12}$ implies that $\tilde{f}$ preserves the foliation $\widetilde{\cF}^{12}$ of $\R^3$ along planes parallel to $E_1^L\oplus E_2^L$.
For every $x\in\R^3$ and every $v\in E_1^L \oplus E_2^L$, we have $P_3\circ\tilde{f}(x) = P_3\circ\tilde{f}(x+v)$ and therefore $P_3\circ\phi(x) = P_3\circ\phi(x+v)$.
Since $F_L$ is a $3$-dimensional Anosov automorphism, 
the plane $E_2^L\oplus E_3^L$ projects to a dense subset of $\T^3$,
and since the map $P_3 \circ \phi$ is $\Z^3$-periodic, it must be constant. 
So the derivative of $f$ written w.r.t.\ to the splitting $\R^3=E_1^L\oplus E_2^L\oplus E_3^L$ is necessarily of the form
\[
\begin{pmatrix}
	\;\; * \;\; & \;\; * \;\; & \;\; 0 \;\; \\ 
	* & * & 0 \\ 
	* & * & \pm e^{\lambda_3(L)} 
\end{pmatrix}
\, .
\]
Since $\tilde{f}$ is volume preserving, this implies that the absolute value of the determinant of $D\tilde{f}$ restricted to $E_1^L\oplus E_2^L$ is everywhere constant $e^{-\lambda_3(L)}$. The same is true for $Df$ and hence
$\lambda_1(f) + \lambda_2(f) = -\lambda_3(L)$, that is, $\lambda_3(f) = \lambda_3(L)$.
\end{proof}

Coming back to the proof of \cref{t.T3}, we need a variation of the central proposition (\cref{p.central}) where all diffeomorphisms preserve the foliation $\cF^{12}$: 

\begin{proposition}\label{p.central_T3}
Let $a_1$, $\sigma$, and $\delta_0$ be positive numbers. Then, there exists $\delta \in (0,\delta_0)$ with the following properties:

Let $f\in \Diff_m^\infty(\T^3)$ be an Anosov diffeomorphism  homotopic to $F_L$ 
admitting a simple dominated splitting, and such that $\mathsf{g}_u(\boldsymbol{\lambda}(f)) \ge \sigma$.
In addition, assume that $f$ preserves the foliation $\cF^{12}$ (and so $\lambda_3(f) = \lambda_3(L)$, by \cref{l.rig}).
Then there exists a continuous map
\[
t \in [0,1] \mapsto f_t \in \Diff_m^\infty(\T^3)
\]
where $f_0 = f$
and for each $t \in [0,1]$, the conservative diffeomorphism $f_t$ is Anosov, admits a simple dominated splitting, and its top Lyapunov exponent satisfies: 
\begin{gather*}
\lambda_1(f) - 4\delta a_1 < \lambda_1(f_t) < \lambda_1(f) + \phantom{1}\delta a_1 \, , \\
\lambda_1(f) - 4\delta a_1 < \lambda_1(f_1) < \lambda_1(f) -           2\delta a_1 \, .
\end{gather*}
In addition, each $f_t$ preserves the foliation $\cF^{12}$ (and so $\lambda_3(f_t) = \lambda_3(L)$, by \cref{l.rig}).
\end{proposition}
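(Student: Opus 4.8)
The plan is to run the proof of \cref{p.central} in the case $d=3$, restricted to the one-parameter subfamily that moves only $\hat{\lambda}_1=\lambda_1$, and with all the auxiliary data chosen compatibly with the smooth $f$-invariant foliation $\cF^{12}$. The key preliminary observation is that $T\cF^{12}=E_1\oplus E_2$. Indeed, $T\cF^{12}$ is a $Df$-invariant continuous $2$-plane field, so at $m$-almost every point it is a sum of two of the one-dimensional Oseledets subspaces $E_i$; by ergodicity of $m$ and continuity of the dominated splitting this holds everywhere with a fixed index set, and since \cref{l.rig} already gives $\lambda_3(f)=\lambda_3(L)$ while the spectrum is simple and ordered, the only possibility is $T\cF^{12}=E_1\oplus E_2$. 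In particular $E_3$ is transverse to $\cF^{12}$.

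Next I would adapt \cref{p.adapted_metric,p.adapted_charts} to this setting. The Lyapunov metric, with the $L^1$ estimate \eqref{e.adapted_L1}, is used verbatim. For the charts, I would take the conservative atlas in the proof of \cref{p.adapted_charts} to consist of flow boxes for $\cF^{12}$, so that each $F_i$ maps the horizontal foliation $\{z_3=\mathrm{const}\}$ of $\R^3$ into $\cF^{12}$, while keeping $\cL_x$ exactly as prescribed there: a Lyapunov-orthonormal frame with $\cL_x e_j\in E_j(x)$. Because $E_1(x)\oplus E_2(x)=T_x\cF^{12}$, the linear map $A_x\coloneqq DF_i^{-1}(x)\circ\cL_x$ carries $\R^2\times\{0\}$ onto itself, hence is block upper triangular, so the last coordinate of $A_x z$ depends only on $z_3$; therefore the Lyapunov chart $\Phi_x(z)=F_i\bigl(F_i^{-1}(x)+A_x z\bigr)$ sends each horizontal plane into a single leaf of $\cF^{12}$. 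Consequently, if in the construction of \cref{s.central_proof} one inserts on each Lyapunov ball a copy of the \emph{elementary} model deformation $h^{(1)}_s$ — which fixes the last coordinate and therefore preserves the horizontal foliation of $\B$ — the resulting perturbation $g_t$ maps each leaf of $\cF^{12}$ into itself, and hence $f_t\coloneqq f\circ g_t$ preserves $\cF^{12}$.

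With this in hand, I would repeat the proof of \cref{p.central} essentially word for word, now with a single parameter $t\in[0,1]$ and with $h_{Bt}$ replaced by $h^{(1)}_{b_1 t}$, where (after possibly shrinking $\delta_0$ so that $3\delta_0 a_1\le\mathcal{Q}(1)$) one takes $b_1\in[0,1]$ with $\mathcal{Q}(b_1)=3\delta_0 a_1$ as in \eqref{e.def_bj}, $\nu\coloneqq(\delta_0/2)a_1$, $N\coloneqq N_0(\alpha,\beta,\gamma,\kappa,\sigma)$, and $\delta\coloneqq\delta_0/N$, so that $\delta$ depends only on $a_1$, $\sigma$, $\delta_0$. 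All facts borrowed from \cref{s.model} — the cone inclusions \eqref{e.nested_cones}, the expansion bound \eqref{e.kappa}, the minor estimate \eqref{e.minor_cone}, and the integral formula \eqref{e.int_log_minor} — apply because $\{h^{(1)}_s:s\in[0,1]\}$ is the subfamily $\{h_{(s,0)}\}$ of the model deformation. As in \cref{l.checker} and \cref{p.damping}, each $f_t$ is then a conservative Anosov $C^\infty$ diffeomorphism of unstable index $u$ with a simple dominated splitting; it is homotopic to $F_L$ because $g_t$ is isotopic to the identity through compactly supported diffeomorphisms of the Lyapunov balls, so \cref{l.rig} gives $\lambda_3(f_t)=\lambda_3(L)$. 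The estimate on $\lambda_1(f_t)=\hat{\lambda}_1(f_t)$ is exactly the $j=1$ case of \cref{l.formula_exponent,l.no_change,l.key_estimate}, which together with \eqref{e.int_log_minor} gives
\[
\bigl|\lambda_1(f_t)-\lambda_1(f)+m(Z_4)\,\mathcal{Q}(b_1 t)\bigr|\le m(Z_4)\,\nu+O(\epsilon),
\]
and, using $0\le\mathcal{Q}(b_1 t)\le\mathcal{Q}(b_1)=3\delta_0 a_1$ and $1/N-m(Z_4)=O(\epsilon)$, a sufficiently small $\epsilon$ yields $\lambda_1(f)-4\delta a_1<\lambda_1(f_t)<\lambda_1(f)+\delta a_1$ for all $t$ and $\lambda_1(f)-4\delta a_1<\lambda_1(f_1)<\lambda_1(f)-2\delta a_1$. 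The one genuinely new ingredient — and the step I would be most careful about — is the construction of the $\cF^{12}$-adapted Lyapunov charts together with the verification (the block-triangularity argument above) that the inserted model perturbations leave $\cF^{12}$ invariant; the rest is a direct specialization of \cref{s.central_proof}.
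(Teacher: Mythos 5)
Your proposal is correct and takes essentially the same route as the paper: build Lyapunov charts that carry horizontal slices into leaves of $\cF^{12}$, insert the single elementary model deformation $h^{(1)}_s$ (which fixes $z_3$), observe the resulting $f_t$ preserves $\cF^{12}$, and then specialize \cref{p.central} to one parameter. The only presentational difference is in producing the foliated charts: the paper exploits the flat affine structure of $\T^3$, writing $\Phi_x(z)=x+\pi(\cL_x z)$ so that $D\Phi_x\equiv\cL_x$ and the foliated property is immediate since $\cL_x(\R e_1\oplus\R e_2)=E_1^f(x)\oplus E_2^f(x)=E_1^L\oplus E_2^L$, whereas you take a conservative atlas of flow boxes for $\cF^{12}$ and deduce the foliated property from the block-triangularity of $A_x=DF_i^{-1}(x)\circ\cL_x$; the two are interchangeable here, with your version being marginally more general (it does not rely on the foliation being linear). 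You also spell out why $T\cF^{12}=E_1\oplus E_2$ via the Oseledets rigidity of invariant subbundles together with \cref{l.rig} — a point the paper uses without comment — which is a worthwhile addition, though one should note that the decisive input is that the quotient cocycle on $T\T^3/T\cF^{12}$ has exponent $\lambda_3(L)$ (as read off from the constant $(3,3)$-entry in the proof of \cref{l.rig}), which pins down the index set as $\{1,2\}$.
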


Once this is established, we mimic the proof of \cref{t.majorize}; namely, we concatenate paths produced by the \lcnamecref{p.central_T3} and obtain a deformation of $F_L$ that ends with a diffeomorphism having Lyapunov spectrum equal to $\boldsymbol{\xi}$.

In order to prove \cref{p.central_T3}, we begin by modifying the construction of the Lyapunov charts in \cref{p.adapted_charts}. 
Since we are working in the torus $\T^3 = \R^3/\Z^3$, we can identify tangent spaces $T_x\T^3$ with $\R^3$.
Let $\pi \colon \R^3 \to \T^3$ be the quotient projection.
For each $x \in \T^3$, let $\cL_x \colon \R^3 \to \R^3$ be a linear map that takes the canonical basis $\{e_1,e_2,e_3\}$ of $\R^3$
to a basis $\{\cL_x(e_1), \cL_x(e_2), \cL_x(e_3)\}$ of $\R^3$ which is orthonormal for the Lyapunov metric $\biangle{\mathord{\cdot}, \mathord{\cdot}}_x$, and moreover $\cL_x(e_j) \in E_j^f(x)$ for each~$j$.
Then we define the Lyapunov charts 
\[
\Phi_x(z) \coloneqq x + \pi (\cL_x(z)) \, . 
\]
These charts have all properties from \cref{p.adapted_charts} and the following additional one:
\begin{enumerate}[start=5]
\item $\Phi_x$ is a \emph{foliated chart}, that is, if $z_1, z_2, z_3$ denote canonical coordinates in $\R^3$ then $\Phi_x$ maps   levels set of $z_3$ (horizontal slices) into leaves of~$\cF^{12}$. 
\end{enumerate}
Indeed, the derivatives $D\Phi_x(z)$ are constant and equal to $\cL_x$, and $\cL_x$ maps the plane $\R e_1 \oplus \R e_2$ to the plane $E_1^f(x) \oplus E_2^f(x) = E_1^L \oplus E_2^L$, which is tangent to $\cF^{12}$.

Then we follow the proof of \cref{p.central} (but with $a_2=0$). 
To summarize, the deformation $f_t$ of $f$ is constructed as follows:
\begin{itemize}
	\item We select a disjoint family of Lyapunov balls $\cB_1$, \dots, $\cB_p$; each ball $\cB_i$ equals $\Phi_i(\B)$, where $\B$ is the unit ball $\B$ in $\R^3$ and $\Psi_i (z) = \Phi_{x_i}(s_i z)$ is a rescaled Lyapunov chart;
	\item On each Lyapunov ball $\cB_i$ the deformation is defined as $f_t = f \circ g_t$, where $g_t = \Psi_i \circ h^{(1)}_{b_1 t} \circ \Psi_i^{-1}$; here $b_1>0$ is a constant and $h^{(1)}_t$ is the first elementary model deformation.
\end{itemize}
Inspecting the equations \eqref{e.def_R_theta}, \eqref{e.def_h_elementary} that define the diffeomorphism $h^{(1)}_t \colon \B \to \B$, we immediately see that it preserves horizontal slices (i.e.\ level sets of $z_3$).
Since $\Psi_i$ maps horizontal slices into leaves of $\cF^{12}$, the upshot is that each diffeomorphism $f_t$ preserves the foliation $\cF^{12}$. 
This proves \cref{p.central_T3}.
As explained before, \cref{t.T3} follows.
\end{proof}


\bigskip

\end{document}